\documentclass[12pt]{amsart}
\usepackage{geometry}                
\geometry{letterpaper}                   
\usepackage{graphicx}
\usepackage{amsmath,amsfonts,amssymb,amsthm}
\usepackage{epstopdf,mathrsfs}
\usepackage{caption,subcaption}
\usepackage[colorlinks,citecolor=blue,urlcolor=blue]{hyperref}
\usepackage{tikz}
\usetikzlibrary{chains,matrix,scopes,fit,decorations,positioning,shapes,snakes,arrows,decorations.markings}
\DeclareGraphicsRule{.tif}{png}{.png}{`convert #1 `dirname #1`/`basename #1 .tif`.png}
\usepackage[algo2e,norelsize]{algorithm2e}

\theoremstyle{plain}
\newtheorem{thm}{Theorem}[section]
\newtheorem{lem}[thm]{Lemma}
\newtheorem{prop}[thm]{Proposition}
\newtheorem{cor}[thm]{Corollary}

\theoremstyle{definition}
\newtheorem{defn}[thm]{Definition}

\newtheorem{exmp}[thm]{Example}

\theoremstyle{remark}
\newtheorem{rem}[thm]{Remark}

\newcommand{\sC}{\mathscr{C}}

\newcommand{\cE}{\mathcal{E}}
\newcommand{\cG}{\mathcal{G}}
\newcommand{\cH}{\mathcal{H}}
\newcommand{\cK}{\mathcal{K}}

\newcommand{\sP}{\mathscr{P}}
\newcommand{\R}{\mathbb{R}}
\newcommand{\bS}{\mathbf{S}}
\newcommand{\cS}{\mathcal{S}}
\newcommand{\sS}{\mathscr{S}}
\newcommand{\sT}{\mathscr{T}}

\newcommand{\bX}{\mathbf{X}}
\newcommand{\bx}{\mathbf{x}}

\newcommand{\indep}{{\;\bot\!\!\!\!\!\!\bot\;}}

\newcommand{\la}{\langle}
\newcommand{\ra}{\rangle}

\newcommand{\ot}{\leftarrow}
\newcommand{\sgn}{\mathrm{sgn}}
\newcommand{\liea}{\mathfrak{g}}

\newcommand{\RR}{{\mathbb R}}
\newcommand{\NN}{{\mathbb N}}
{\begin{figure} \begin{center}}%
{\end{center} \end{figure}}

\def\sgn{{\rm sgn}\,}

\newcommand{\tcH}{\widetilde{\cH}}

\newcommand{\SplusG}{\cS_\cG^+}

\newcommand{\down}{\hspace{-.5ex}\downarrow\hspace{-.5ex}}
\newcommand{\downb}{\,\down}

\newcommand{\Aut}{\mathrm{Aut}}

\definecolor{darkgreen}{rgb}{0.1,0.6,0.1}

\title[Groups of Gaussian chain graph models]{Automorphism groups of Gaussian chain graph models}

\author{Jan Draisma}
\address{TU Eindhoven\\Department of Mathematics and Computer Science \\PO
Box 513 \\5600 MB Eindhoven \\ and CWI Amsterdam \\ The Netherlands }
\curraddr{}
\email{j.draisma@tue.nl}
\thanks{Both authors acknowledge support of Draisma's Vidi grant
from the Netherlands Organisation for Scientific Research (NWO) and hospitality of the Simons Institute for the Theory of Computing in Berkeley during the Fall 2014 program {\em Algorithms and Complexity in Algebraic Geometry}. PZ was supported from the European
Union Seventh Framework Programme (PIOF-GA-2011-300975). }
\author{Piotr Zwiernik}
\address{Universit\`a di Genova\\Dipartimento di Matematica \\16146 Genova\\Italy}
\curraddr{}
\email{piotr.zwiernik@gmail.com}
\thanks{}


\begin{document}
\maketitle
\begin{abstract}
In this paper we extend earlier work on groups acting on Gaussian graphical models to Gaussian Bayesian networks and more general Gaussian models defined by chain graphs. We discuss the maximal group which leaves a given model invariant and provide basic statistical applications of this result. This includes equivariant estimation, maximal invariants and robustness. The computation of the group requires finding the essential graph.  However, by applying St\'{u}deny's theory of imsets we show that computations for DAGs can be performed efficiently without building the essential graph. In our proof we derive simple necessary and sufficient conditions on vanishing sub-minors of the concentration matrix in the model.\end{abstract}


\section{Introduction}

Having an explicit group action on a parametric statistical model gives a better understanding of equivariant estimation or invariant testing for the model under consideration \cite{nielsen_transformation82,eaton1989,lehmannromano,sun2005estimation}. In \cite{DrKuZw2013} we have identified the largest group that acts on an undirected  Gaussian graphical model and we have shown how this group can be used to study equivariant estimators of the covariance matrix in this model class. In the present paper we extend our discussion  to chain graph models.

A chain graph $\cH$ is a graph with both directed and undirected edges that  contains no semi-directed cycles, that is sequences of nodes $i_1,\ldots,i_k,i_{k+1}=i_1$ such that for every $j=1,\ldots,k$ either $i_j-i_{j+1}$ or $i_j\to i_{j+1}$ but at least one edge is directed. In this paper we focus on \emph{chain graphs without flags} (NF-CGs), that is with no induced subgraphs of the form $i\to j-k$. Note that both undirected graphs and directed acyclic graphs (DAGs) are chain graphs without flags. For more details on these graph-theoretic notions see Section \ref{sec:graph}.

\emph{Gaussian models on chain graphs} constitute a flexible family of graphical models, which contains both undirected Gaussian graphical models and Gaussian Bayesian networks defined by directed acyclic graphs (DAGs). Let $\cH$ be a NF-CG. Let $\R^\cH$ denote the space of all $m\times m$ matrices $\Lambda=[\lambda_{ij}]$ such that $\lambda_{ij}=0$ if $i\not\rightarrow j$ in $\cH$; let $\cS^+_m$ denote the space of all $m\times m$ symmetric positive definite matrices and let $\cS^+_\cH$ be the subspace of  $\cS^+_m$ consisting of matrices $\Omega=[\omega_{ij}]$ such that $\omega_{ij}=0$ if $i\neq j$ and $i-\!\!\!\!\!/\, j$ in $\cH$. The Gaussian chain graph model $M(\cH)$ of a NF-CG $\cH$ consists of all Gaussian distributions on $\R^{m}$ with mean zero and concentration matrices $K$ of the form 
\begin{equation}\label{eq:parameter}
K\quad =\quad (I-\Lambda)\Omega(I-\Lambda^T)\quad\mbox{such that } \Lambda\in \R^\cH,\, \Omega\in \cS^+_\cH.
\end{equation}
The set of all matrices of this form will be denoted by $\cK(\cH)$. 

\begin{rem}
Two non-equivalent definitions of chain graph models can be found in the literature and they are referred to as LWF or AMP chain graph models in \cite{andersson2001}, which refers to: Lauritzen-Wermuth-Frydenberg \cite{frydenberg1990,LauritzenWermuth1989} and Andersson-Madigan-Perlman \cite{andersson2001} (Alternative Markov Properties). These two definitions differ in how exactly a graph encodes the defining set of conditional independence statements.  However, if $\cH$ has no flags then both definitions coincide (see \cite[Theorem 1, Theorem 4]{andersson2001}). 
\end{rem}

Let $\bX$ be a Gaussian vector with the covariance matrix $\Sigma\in M(\cH)$. A linear transformation $g\in {\rm GL}_m(\R)$ yields another Gaussian vector $\mathbf{Y}=g \bX$. A basic question of equivariant inference is for which $g$ the covariance matrix $g\Sigma g^T$ of $\mathbf{Y}$ still lies in $M(\cH)$. More formally, the general linear group ${\rm GL}_{m}(\mathbb{R})$ acts on $\mathcal{S}^{+}_{m}$ by $g\cdot \Sigma:=g\Sigma g^T$. Fix a chain graph $\cH$. We study the problem of finding:
\begin{equation}\label{eq:mainproblem}
G\quad:=\quad\{g\in {\rm GL}_m(\R)|\,g\cdot M(\cH)\subseteq M(\cH)\}.
\end{equation}
In other words, find the stabilizer of $M(\cH)$ in ${\rm
GL}_m(\R)$. 

\begin{rem}
The set $G$ is a closed algebraic subgroup of ${\rm GL}_m(\R)$, and in
particular has the structure of a Lie group. First, it is clear from the
definition that $G$ is closed under matrix multiplication. To see that it
is closed under inversion and closed in the Zariski topology, we argue as
follows.  Let $\overline{M(\cH)}$ denote the Zariski closure of $M(\cH)$
in $\RR^{m \times m}$, that is, the set of matrices in $\RR^{m \times m}$
whose entries satisfy all polynomial equations that hold identically on
$M(\cH)$. Suppose that $g \in {\rm GL}_m(\R)$ maps $\overline{M(\cH)}$
into itself. Then, since acting with $g$ preserves positive definite
matrices and since $M(\cH)$ consists of all positive definite matrices
in $\overline{M(\cH)}$ (see \cite[Proposition 3.3.13]{oberwolfach2009} for the case
of DAGs; the general chain graph case is similar), $g$ also preserves
$M(\cH)$. Thus $G$ may be characterized as the stabilizer of the real
algebraic variety $\overline{M(\cH)}$. This shows that $G$ is Zariski
closed. To see that it is also closed under inversion, note that $g \cdot
\overline{M(\cH)}$ is a real algebraic variety of the same dimension as
$\overline{M(\cH)}$ and contained in $\overline{M(\cH)}$, hence equal
to $\overline{M(\cH)}$. But then also $g^{-1} (\overline{M(\cH)})$
equals $\overline{M(\cH)}$.
\end{rem}

The problem in (\ref{eq:mainproblem}) can be alternatively phrased in terms of concentration matrices, which will be more useful in our case. Let ${\rm GL}_m(\R)$ act on $\cS^+_m$ by $g\cdot K:=g^{-T}K g^{-1}$. Now find all $g\in {\rm GL}_m(\R)$ such that $g\cdot \cK(\cH)\subseteq \cK(\cH)$. 

\begin{exmp}\label{ex:DAG3}
The DAG $\overset{1}{\bullet}\rightarrow\overset{2}{\bullet}\rightarrow\overset{3}{\bullet}$ defines a model given by a single conditional independence statement $X_1\indep X_3|X_2$ and hence is equal to the model on the undirected graph $\overset{1}{\bullet}-\overset{2}{\bullet}-\overset{3}{\bullet}$. Since the directed part of this graph is empty then by (\ref{eq:parameter}) the model  consists of all covariance matrices such that the corresponding concentration matrices are of the form
$$
K=\Omega=\left[\begin{array}{ccc}
* & * & 0\\
* & * & *\\
0 & * & *
\end{array}
\right].
$$
By \cite[Theorem 1.1]{DrKuZw2013} $G$ in this case consists of invertible matrices of the form
$$
\left[\begin{array}{ccc}
* & * & 0\\
0 & * & 0\\
0 & * & *
\end{array}
\right]\qquad\mbox{or}\qquad \left[\begin{array}{ccc}
0 & * & *\\
0 & * & 0\\
* & * & 0
\end{array}
\right].
$$
\end{exmp}

\subsection{The group $G$}

Example \ref{ex:DAG3} showed that two different chain graphs may  define the same chain graph model. We discuss this in more detail in Section \ref{sec:equivalence}. For any NF-CG  $\cH$ denote by $\cH^*$ the \emph{unique} graph without flags with the largest number of undirected edges which induces the same Gaussian model as $\cH$. The fact that such a unique graph exists follows from Proposition \ref{prop:essexists} given later. For example for the DAG in Example \ref{ex:DAG3} such a graph is given by the undirected graph $\overset{1}{\bullet}-\overset{2}{\bullet}-\overset{3}{\bullet}$. By $c^*(i)$ we denote the children of $i$ in $\cH^*$, so $c^*(i)=\{j:\,i\rightarrow j\mbox{ in }\cH^*\}$. Similarly by $n^*(i)$ we denote the set of neighbours of $i$ in $\cH^*$, that is, nodes $j$ connected to $i$ by an undirected edge, which we denote by $i-j$. We write
$$
N^{*}(i)\,\,\,:=\,\,\,\{i\}\cup n^{*}(i)\cup c^{*}(i).
$$

Our main results can be summarized as follows. For a fixed chain graph without flags $\cH$ with set of nodes given by $[m]:=\{1,\ldots,m\}$ consider the set $G^0$ of invertible matrices given by
\begin{equation}\label{eq:G0}
G^{0}\,\,\,:=\,\,\,\{g=[g_{ij}]\in {\rm GL}_{m}(\R):\,\, g_{ij}= 0 \mbox{ if }N^{*}(i)\not\subseteq N^{*}(j)\}.
\end{equation}
Further, an automorphism of a chain graph is any permutation of its nodes that maps directed edges to directed edges and undirected edges to undirected edges. 
 \begin{thm}\label{th:main}Let $\cH$ be a chain graph without flags. The group $G$ in (\ref{eq:mainproblem}) is generated by its connected normal subgroup $G^0$ and the group ${\rm Aut}(\cH^*)$ of automorphisms of the essential graph $\cH^*$. 
\end{thm}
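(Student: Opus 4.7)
The plan is to first reduce to the case $\cH=\cH^*$: since $M(\cH)=M(\cH^*)$ by the very definition of the essential graph (cf.\ Proposition~\ref{prop:essexists}), the stabilizer $G$ depends only on $\cH^*$, so one may assume throughout that $\cH=\cH^*$. The theorem then splits into two main inclusions: the easy direction $G^0\cdot\Aut(\cH^*)\subseteq G$, combined with Zariski-connectedness and normality of $G^0$; and the hard direction $G\subseteq G^0\cdot\Aut(\cH^*)$.

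For the easy direction, $\Aut(\cH^*)\subseteq G$ is immediate, because an automorphism $\sigma$ acts by simultaneous permutation of rows and columns, and this preserves the zero patterns of $\Lambda\in\R^{\cH}$ and $\Omega\in\cS^+_\cH$ defining the parametrisation~\eqref{eq:parameter}. To show $G^0\subseteq G$, one verifies directly that for $g\in G^0$ and $K\in\cK(\cH^*)$ the matrix $g^{-T}Kg^{-1}$ again satisfies the vanishing-entry and vanishing-sub-minor conditions characterising $\cK(\cH^*)$, the ``simple necessary and sufficient conditions on vanishing sub-minors'' advertised in the abstract. An elementary fact used throughout is that the linear space $\{g:g_{ij}=0 \text{ whenever } N^*(i)\not\subseteq N^*(j)\}$ is closed under matrix multiplication, since a nonzero contribution $g_{ik}h_{kj}$ to $(gh)_{ij}$ forces $N^*(i)\subseteq N^*(k)\subseteq N^*(j)$; ordering $[m]$ compatibly with the partial order $i\preceq j\iff N^*(i)\subseteq N^*(j)$, elements of $G^0$ are block upper triangular, so $G^0$ really is a group. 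Zariski-connectedness then follows because $G^0$ is an open subset of a vector space, hence irreducible; normality in $G$ follows because every $\sigma\in\Aut(\cH^*)$ permutes the family $\{N^*(i)\}_{i\in[m]}$, preserving $\preceq$ and hence $G^0$ under conjugation by the permutation matrix $P_\sigma$.

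The reverse inclusion is the heart of the argument. Given $g\in G$, one must produce an automorphism $\sigma\in\Aut(\cH^*)$ such that $gP_\sigma^{-1}\in G^0$. The strategy is to exploit the characterisation of $\cK(\cH^*)$ by its vanishing entries and sub-minors: the condition $g\cdot\cK(\cH^*)\subseteq\cK(\cH^*)$ translates into linear and polynomial constraints on the entries of $g$, and analysing the support pattern of $g$ as $K$ ranges over the model should reveal that, after composition with a suitable permutation matrix $P_\sigma$, the support of $gP_\sigma^{-1}$ respects the partial order $\preceq$.

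The main obstacle is extracting $\sigma$ from $g$ in a canonical way. One expects that the data $\{N^*(i)\}_{i\in[m]}$ together with the distinction between neighbours $n^*(i)$ and children $c^*(i)$ is rigid enough that any support-preserving bijection of $[m]$ induced by $g$ must come from a chain-graph automorphism of $\cH^*$. A natural route is induction on $|V(\cH^*)|$, peeling off a sink chain component and reducing either to the undirected result of \cite{DrKuZw2013} on that component or to a smaller chain graph; alternatively, rank-deficient elements of $\cK(\cH^*)$ saturating the vanishing sub-minor conditions can serve as diagnostic probes that force $g$, up to $\Aut(\cH^*)$, to have the prescribed zero pattern of $G^0$.
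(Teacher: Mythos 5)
Your proposal correctly identifies the architecture of the result (the decomposition $G=\Aut(\cH^*)\cdot G^0$, the reduction to $\cH=\cH^*$, and the fact that $G^0$ is a connected normal subgroup), but it leaves the entire technical content of the theorem unproved. The hard inclusion $G\subseteq \Aut(\cH^*)\cdot G^0$ is exactly where the paper's work lies, and your treatment of it consists of statements of intent (``should reveal'', ``one expects'', ``a natural route is'') rather than an argument. Two concrete gaps: (1) you never justify why an arbitrary $g\in G$ can be written as a \emph{permutation matrix} times an element of the identity component --- in the paper this is Proposition~\ref{lem:asperm}, a structural fact imported from \cite{DrKuZw2013}, after which Lemma~\ref{lem:sigmainG} identifies the admissible permutations as $\Aut(\cH^*)$; and (2) you never prove that the identity component is no larger than the set in \eqref{eq:G0}, i.e.\ that $E_{ij}\notin\liea$ whenever $N^*(i)\not\subseteq N^*(j)$. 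That second point is Theorem~\ref{thm:G0}, whose proof occupies the appendix: it requires the cup-system expansion of subdeterminants (Proposition~\ref{prop:monomialsInDet}, Corollary~\ref{cor:AB}), the derivative criteria of Lemmas~\ref{lm:CombCrit} and~\ref{lm:CombCrit2}, the vanishing-minor Lemma~\ref{lem:Det00}, and a seven-case analysis that crucially uses essentiality of the graph (e.g.\ that a non-essential arrow $i\to j$ would be witnessed by a legal merging). None of this is reconstructed or replaced in your proposal.

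There is also a flaw in your ``easy direction''. You propose to verify $G^0\subseteq G$ by checking that $g^{-T}Kg^{-1}$ again satisfies ``the vanishing-entry and vanishing-sub-minor conditions characterising $\cK(\cH^*)$''. But the paper explicitly cautions that it is \emph{not known} whether the vanishing subdeterminants characterise the set of concentration matrices $\cK(\cH)$ (they are necessary conditions, known to be sufficient only on the covariance side). So satisfying those conditions does not certify membership in $\cK(\cH^*)$, and this route cannot establish $G^0\subseteq G$. The paper instead proves Lemma~\ref{lm:G0lower} by exhibiting explicit new parameters $\tilde\Lambda\in\R^{\cH}$, $\tilde\Omega\in\cS^+_\cH$ with $(I-tE_{ji})(I-\Lambda)\Omega(I-\Lambda)^T(I-tE_{ij})=(I-\tilde\Lambda)\tilde\Omega(I-\tilde\Lambda)^T$, which is the argument you would need here (together with the fact, from \cite{DrKuZw2013}, that the one-parameter groups $I+tE_{ij}$ and the diagonal torus generate the group in \eqref{eq:G0}).
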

In the undirected case, this theorem reduces to \cite[Theorem 1.1]{DrKuZw2013}. However, the proof in our current, more general setting is much more involved, first because the set $\cK(\cH)$ is not a linear space, and second because the characterization is in terms of the essential graph rather than the graph itself.

Note that for some graphs there may be two nodes $i,j$ such that $N^{*}(i)=N^{*}(j)$. In this case the transposition of $i$ and $j$ lies already in $G^0$, which shows that $G^0$ and ${\rm Aut}(\cH^*)$ may have a non-trivial intersection. In Section \ref{sec:GG} we prove a more refined version of Theorem \ref{th:main} that gets rid of this redundancy.

Given a set of edges defining a chain graph without flags $\cH$ we would like to find $G^{0}$ by listing all pairs $(i,j)$ for $i,j\in [m]$ such that $g_{ij}=0$ for all $g\in G^{0}$. Since our theorem depends on computing the essential graph $\cH^*$, a natural question arises on complexity of this computation. In Section \ref{sec:DAG} we show how $G^{0}$ can be efficiently computed in the case of DAGs. We propose an efficient algorithm  that does not require computing the essential graph $\cH^*$. 

\subsection{Existence and robustness of equivariant estimators}
The description of the group $G$ can be used to analyse the inference for chain graph models. Let $\mathbf{X}=(\mathbf{X}_1,\ldots,\mathbf{X}_n)$ denote a random sample of length $n$ from the model $M(\cH)$. An estimator of the covariance matrix of $\bX$ is any map $T_n:(\R^m)^n\rightarrow M(\cH)$. In this paper we are interested in \textit{equivariant estimators}, that is, estimators satisfying 
\begin{equation}\label{eq:equivariance}
T_n(g\cdot \mathbf{X})=g\cdot T(\mathbf{X})\qquad\mbox{for every }g\in G, \mathbf{X}\in (\R^m)^n, 
\end{equation}
where the action of $G$ on $(\R^m)^n$ is 
$$
g\cdot (x_1,\ldots,x_n)=(gx_1,\ldots,gx_n).
$$
An important example of an equivariant estimator is the maximum likelihood estimator. A natural theoretical question is how large the sample size needs to be so that an equivariant estimator $T_n$ exists with probability one (see \cite[Section 1.2]{DrKuZw2013}). Define $\downarrow\! i:=\{j:\, N^{*}(i)\subseteq N^{*}(j)\}$.

\begin{thm}\label{th:Geqexist}
Let $\cH$ be a chain graph without flags with set of nodes $[m]$. There exists a $G$-equivariant estimator $T_{n}:(\mathbb{R}^{m})^{n}\rightarrow M(\cH)$ of the covariance matrix of $\bX$ in the model $M(\cH)$ if and only 
$$n\,\,\geq\,\, \max_{i\in [m]}|\downarrow\! i|.$$ \end{thm}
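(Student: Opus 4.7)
The overall strategy is to reduce the question to an analysis of the stabilizer $G_\mathbf{X}\subseteq G$ of a generic data matrix $\mathbf{X}\in(\R^m)^n$. Following the general principle in \cite[Section 1.2]{DrKuZw2013}, a $G$-equivariant $T_n$ exists on a Zariski-dense subset if and only if, for generic $\mathbf{X}$, the stabilizer $G_\mathbf{X}$ admits a fixed point in $M(\cH)$. The quantity $\max_i|\!\downarrow\! i|$ enters via the row-by-row description of $G^0$ in (\ref{eq:G0}).

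I would view $\mathbf{X}$ as an $m\times n$ matrix with rows $\mathbf{X}_1,\ldots,\mathbf{X}_m\in\R^n$. Since $g\in G^0$ has its $i$-th row supported in $\downarrow\! i$, the equation $g\mathbf{X}=\mathbf{X}$ decouples into $m$ independent systems, one per row: for each $i$, $\sum_{j\in\downarrow\! i}g_{ij}\mathbf{X}_j=\mathbf{X}_i$, with the $|\!\downarrow\! i|$ unknowns $\{g_{ij}:j\in\downarrow\! i\}$.

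For the necessity direction, suppose $n<|\!\downarrow\! i^*|$ for some $i^*$. Then the $|\!\downarrow\! i^*|$ rows $\{\mathbf{X}_j: j\in\downarrow\! i^*\}\subset\R^n$ are linearly dependent for \emph{every} $\mathbf{X}$, so there is a nonzero vector $v\in\R^m$ supported on $\downarrow\! i^*$ with $\sum_{j\in\downarrow\! i^*}v_j\mathbf{X}_j=0$. Placing these entries on row $i^*$ of a matrix $A\in\R^{m\times m}$ (and zero elsewhere) yields $A\in\mathrm{Lie}(G^0)$ with $A\mathbf{X}=0$, so $\exp(tA)\in G_\mathbf{X}^0$ for every $t$. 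If $\Sigma:=T_n(\mathbf{X})\in M(\cH)$ existed, differentiating $\exp(tA)\Sigma\exp(tA)^T=\Sigma$ at $t=0$ would yield $A\Sigma+\Sigma A^T=0$; reading off the off-diagonal entries $(i^*,l)$ with $l\ne i^*$ gives $\sum_{j\in\downarrow\! i^*}v_j\Sigma_{jl}=0$, which forces $v$ to annihilate every column of the sub-matrix $\Sigma_{\downarrow\! i^*,\,\cdot}$. This contradicts invertibility of $\Sigma$.

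For the sufficiency direction, when $n\ge\max_i|\!\downarrow\! i|$ the rows $\{\mathbf{X}_j: j\in\downarrow\! i\}$ are linearly independent in $\R^n$ for every $i$ and generic $\mathbf{X}$, so each row system has only the trivial solution $g_{ii}=1$ and $g_{ij}=0$ for $j\ne i$; thus $G_\mathbf{X}^0=\{I\}$. Constructing an actual $G$-equivariant $T_n$ then follows the blueprint of \cite[Section 1.2]{DrKuZw2013}: a rational slice for the $G^0$-action on $(\R^m)^n$ produces a rational $G^0$-equivariant map $\mathbf{X}\mapsto T_n(\mathbf{X})\in M(\cH)$, and equivariance under the finite complement $\Aut(\cH^*)$ can be arranged by choosing an $\Aut(\cH^*)$-invariant reference point (e.g.\ $I\in M(\cH)$). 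I expect this final step to be the main obstacle: necessity is a direct $1$-parameter-subgroup calculation, whereas sufficiency requires producing a genuine equivariant estimator and then promoting $G^0$-equivariance to full $G$-equivariance.
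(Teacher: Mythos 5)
Your proposal takes essentially the same route as the paper, which omits this proof and defers to \cite{DrKuZw2013} precisely because the argument depends on $G$ only through the poset $N^*(i)\subseteq N^*(j)$ that you exploit: necessity via a one-parameter subgroup $\exp(tA)$, $A=e_{i^*}v^T$, in the stabilizer of $\mathbf{X}$, and sufficiency via triviality of the generic stabilizer together with a cross-section for the $G$-action. One small repair is needed in the necessity step: the off-diagonal entries $(i^*,l)$, $l\neq i^*$, of $A\Sigma+\Sigma A^T=0$ only give $(\Sigma v)_l=0$ for $l\neq i^*$, which by itself does not contradict invertibility of $\Sigma$; you must also use the $(i^*,i^*)$ entry, which equals $2(\Sigma v)_{i^*}$ (equivalently, the coefficient of $t^2$ in $\exp(tA)\Sigma\exp(tA)^T=\Sigma$, which forces $v^T\Sigma v=0$), to conclude $\Sigma v=0$. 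Finally, as your restriction to generic $\mathbf{X}$ implicitly acknowledges, the estimator in the sufficiency direction can only be defined on a $G$-stable set of full measure, since at $\mathbf{X}=0$ the stabilizer is all of $G$, which fixes no positive definite matrix; this matches the formulation in \cite{DrKuZw2013}.
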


Our next result is the formula for the maximal invariant (see \cite[Section 6.2]{lehmannromano}, \cite[Section 1.3]{DrKuZw2013}). It
uses the equivalence relation $\sim$ on $[m]$ defined by $i \sim j$
if and only if $N^{*}(i)=N^{*}(j)$. We write $\bar{i}$ for
the equivalence class of $i \in [m]$ and $[m]/\sim$ for the set of all
equivalence classes. 
\begin{thm} \label{thm:maxinv}
Let $\cH$ be a chain graph without flags.  Suppose that $n\geq
\max_i|\down i|$. Then the map $\tau:\RR^{m\times n}\to
\prod_{\bar{i}\in [m]/\sim} \RR^{n\times n}$ given by
\[ 
\bx \mapsto
	\left(\bx[\downb i]^T (\bx[\downb i] \bx[\downb i]^T)^{-1} 
	\bx[\downb i]\right)_{\bar{i}\in [m]/\sim},
\]
where $\bx[\downb i] \in \RR^{|\downb \; i|\times n}$ is the submatrix
of $\bx$ given by all rows indexed by\; $\down i$, is a maximal
$G^0$-invariant.
\end{thm}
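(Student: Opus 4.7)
The plan is to exploit the partial order $\bar i \preceq \bar j \Leftrightarrow N^{*}(i)\subseteq N^{*}(j)$ on $[m]/\sim$. By definition $\downb i$ is the upper set generated by $\bar i$, and $g\in G^{0}$ means $g_{ij}=0$ whenever $\bar j\not\succeq \bar i$; after ordering vertices by a linear extension, every $g\in G^{0}$ is block upper triangular with diagonal blocks indexed by equivalence classes, and is invertible if and only if each such block is invertible.

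For $G^{0}$-invariance I would observe that for any $g\in G^{0}$ and any upper set $U\subseteq [m]$ one has $(g\bx)[U]=g[U,U]\,\bx[U]$: for $i\in U$ a nonzero entry $g_{ij}$ forces $\bar j\succeq \bar i$, hence $j\in U$ since $U$ is upward closed. The restriction $g[U,U]$ inherits the block upper triangular structure of $g$ with the same invertible diagonal blocks, so it is invertible. Specializing to $U=\downb i$ shows that $(g\bx)[\downb i]$ and $\bx[\downb i]$ have the same row span in $\RR^{n}$, and the matrix $\bx[\downb i]^{T}(\bx[\downb i]\bx[\downb i]^{T})^{-1}\bx[\downb i]$ is precisely the orthogonal projector onto that span, proving invariance.

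For maximality I would restrict to the dense open set on which each $\bx[\downb i]$ and $\by[\downb i]$ has full row rank $|\downb i|$; the hypothesis $n\geq \max_{i}|\downb i|$ is exactly what makes this condition generic. Assuming $\tau(\bx)=\tau(\by)$, the row spans $V_{\bar i}:=\mathrm{rowspan}\,\bx[\downb i]=\mathrm{rowspan}\,\by[\downb i]$ agree for every $\bar i$. Since $\by_{i}\in V_{\bar i}$ has a unique expansion in the basis $\{\bx_{j}\}_{j\in \downb i}$, its coefficients define a matrix $g$ with $\by=g\bx$ and $g_{ij}=0$ whenever $j\notin \downb i$, so $g$ already lies in the zero pattern defining $G^{0}$.

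The main obstacle is showing that this $g$ is invertible, which reduces to invertibility of each diagonal block $g[\bar i,\bar i]$. The key step is to work modulo $W_{\bar i}:=\mathrm{rowspan}\,\bx[U_{\bar i}]$, where $U_{\bar i}:=\downb i\setminus \bar i=\bigcup_{\bar j\succ \bar i}\downb j$ is again an upper set. Because $U_{\bar i}$ is a union of sets $\downb j$ on which $\tau$-equality forces equal row spans, $W_{\bar i}$ coincides with $\mathrm{rowspan}\,\by[U_{\bar i}]$. In the $|\bar i|$-dimensional quotient $V_{\bar i}/W_{\bar i}$, both $\{\bx_{j}+W_{\bar i}\}_{j\in\bar i}$ and $\{\by_{i}+W_{\bar i}\}_{i\in\bar i}$ are bases by general position, and the congruence $\by_{i}\equiv \sum_{j\in\bar i}g_{ij}\bx_{j}\pmod{W_{\bar i}}$ identifies $g[\bar i,\bar i]$ as the change-of-basis matrix between them, hence invertible. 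Identifying $U_{\bar i}$ cleanly as a union of upper sets so that the $\tau$-hypothesis transfers to the required row-span equality is the delicate part; once that is in place, the remainder is bookkeeping dictated by the partial-order structure.
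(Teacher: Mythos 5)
Your proof is correct and takes essentially the approach the paper intends: the authors omit the argument, remarking only that it depends on $G^0$ solely through the poset induced by $N^{*}(i)\subseteq N^{*}(j)$ and deferring to the undirected case in \cite{DrKuZw2013}, which is exactly the structure you exploit (invariance via row spans over the upward-closed sets $\downb i$, maximality via the change-of-basis blocks on the quotients $V_{\bar i}/W_{\bar i}$). One small note: the linear independence of $\{\bx_j\}_{j\in\bar i}$ modulo $W_{\bar i}$ needs no extra general-position assumption, since it already follows from the full row rank of $\bx[\downb i]$.
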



\begin{exmp}Consider the model defined by $\overset{1}{\bullet}\to\overset{2}{\bullet}\ot \overset{3}{\bullet}$. Then $\downb 1=\{1\}$, $\downb 3=\{3\}$ and $\downb 2=\{1,2,3\}$. This graph is essential and the corresponding  maximal invariant statistic is
$$
(\bx[1]^{T}(\bx[1]\bx[1]^{T})^{-1}\bx[1],\;\;\bx^{T}(\bx\bx^{T})^{-1}\bx,\;\;\bx[3]^{T}(\bx[3]\bx[3]^{T})^{-1}\bx[3]),
$$
where $\bx\in \R^{3\times n}$ is a matrix whose columns are data points, and $\bx[i]$ denotes the $i$-th row of this matrix. Here $\frac{1}{n}\bx[i]\bx[i]^{T}$ is just the sample variance of $X_{i}$.
\end{exmp}

In \cite{DrKuZw2013} we also used the structure of the group to provide non-trivial bounds on the finite sample breakdown point for all equivariant estimators of the covariance matrix for undirected Gaussian graphical models. These results extends to chain graphs without flags.

\begin{prop} \label{prop:robustness}
Assume that $n \geq \max_i |\down i|$. Then for any
$G$-equi\-va\-riant
estimator $T:\RR^{m \times n} \to \SplusG$ the finite sample breakdown
point at a generic sample $\bx$ is at most $\lceil (n-\max_i |\down i|+1)/2\rceil /n$.
\end{prop}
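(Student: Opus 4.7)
The strategy parallels \cite[Proposition~1.4]{DrKuZw2013}, with the maximal clique size there replaced by $d := \max_i |\down i|$ here. Recall that the finite sample breakdown point of $T$ at $\bx$ is the smallest fraction $k/n$ such that some sample $\bx'$ differing from $\bx$ in at most $k$ columns has $T(\bx')$ leaving every compact subset of $\SplusG$; the goal is to exhibit such a corruption with $k := \lceil (n - d + 1)/2 \rceil$.

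Fix $i^* \in [m]$ with $|\down i^*| = d$. By (\ref{eq:G0}) the Lie algebra of $G^0$ contains every matrix of the form $X = \sum_{j \in \down i^*} c_j\, e_{i^*} e_j^T$ with $c = (c_j) \in \R^{\down i^*}$ arbitrary. For a generic sample $\bx$ we may relabel the columns so that $x_{n-d+2}|_{\down i^*}, \ldots, x_n|_{\down i^*}$ are linearly independent in $\R^{\down i^*}$, and then choose $c \neq 0$ orthogonal to each of them. With this choice the one-parameter subgroup
\[
g_t \,:=\, \exp(tX) \;\in\; G^0, \qquad t \in \R,
\]
fixes each anchor column $x_i$ (since $X x_i = (c \cdot x_i|_{\down i^*})\, e_{i^*} = 0$) and is unbounded in $\GL_m(\R)$ as $t \to \infty$.

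Partition $\{1, \ldots, n - d + 1\} = A \sqcup B$ with $|A| = k$ and $|B| = n - d + 1 - k \leq k$, and define the modified samples
\[
\bx'_{t,i} = \begin{cases} g_t x_i, & i \in A,\\ x_i, & i \notin A, \end{cases}\qquad
\by_{t,i} = \begin{cases} g_t^{-1} x_i, & i \in B,\\ x_i, & i \notin B. \end{cases}
\]
Then $\bx'_t$ differs from $\bx$ on $A$ ($k$ columns), $\by_t$ differs from $\bx$ on $B$ (at most $k$ columns), and using $g_t x_i = x_i$ on the anchor columns one verifies case-by-case that $\bx'_t = g_t \cdot \by_t$. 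Suppose the breakdown point at $\bx$ were strictly greater than $k/n$: then some compact $C \subset \SplusG$ would contain both $T(\bx'_t)$ and $T(\by_t)$ for all $t$. By $G$-equivariance $T(\bx'_t) = g_t \cdot T(\by_t)$, and so $T(\bx'_t) \in C \cap (g_t \cdot C)$ for every $t$. Since the action of $\GL_m(\R)$ on $\cS^+_m$ is proper and $g_t \to \infty$ in $\GL_m(\R)$, the intersection $C \cap (g_t \cdot C)$ is empty for all sufficiently large $t$, yielding the required contradiction.

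\textbf{Main obstacle.} The delicate ingredient is the construction of a non-compact subgroup $\{g_t\} \subset G^0$ that fixes exactly $d - 1$ generic columns of $\bx$: one needs enough free entries in row $i^*$ of $g \in G^0$ to solve the $d-1$ anchor conditions while retaining a one-dimensional non-compact degree of freedom. This is precisely what Theorem~\ref{th:main} supplies via the identity $|\{j : g_{i^* j} \text{ unrestricted in } G^0\}| = |\down i^*| = d$; without this combinatorial input the counting of anchors versus corruptions would fail. Beyond this, the argument is a direct transcription of the undirected case of \cite{DrKuZw2013}, with the symmetric neighborhood $N(i)$ replaced everywhere by the asymmetric $\down i$.
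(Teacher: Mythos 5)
Your argument is correct and is essentially the proof the paper intends: the paper omits it, deferring to the undirected case in \cite{DrKuZw2013}, and your construction is exactly that argument with the clique/neighborhood poset replaced by the poset of the sets $\down i$, using the $d$ free entries in row $i^*$ of $G^0$ to fix $d-1$ generic anchor columns while retaining an unbounded one-parameter subgroup. The only wrinkle is that $\exp(tX)$ stays bounded as $t\to\infty$ when $c_{i^*}<0$; this is harmless since one may replace $c$ by $-c$ (or use the affine family $I+tX\in G^0$ directly, as $G^0$ consists of all invertible matrices with the prescribed zero pattern).
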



Unlike the proof of Theorem \ref{th:main}, the proofs of Theorem \ref{th:Geqexist},  Theorem \ref{thm:maxinv} and Proposition \ref{prop:robustness} are similar to the undirected case because they depend on $G$ only through the induced poset defined by the ordering relation $N^{*}(i)\subseteq N^{*}(j)$, which drives the zero pattern of the group $G^0$. The proofs of these three results will be therefore omitted, see \cite{DrKuZw2013} for details.

\subsection*{Organization of the paper}
In Section \ref{sec:graph} we provide some basic graph-theoretical definitions. The theory of Markov equivalence of chain graphs will be discussed in Section \ref{sec:equivalence}.  In Section \ref{sec:subdet} we provide new results that give necessary and sufficient vanishing conditions for  subdeterminants of the concentration matrix $K\in \cK(\cH)$. In Section \ref{sec:GG} we analyze the structure of the group $G$ in order to prove Theorem \ref{th:main}. In Section \ref{sec:DAG} we show that in the case of DAG models, structural imsets give us all the required information to identify $G$ without constructing the essential graphs.  Section \ref{sec:examples} contains some simple examples of Theorem \ref{th:main}.

\section{Preliminaries}

In this section we discuss basic notions of the theory of chain graphs and chain graph models. 

\subsection{Basics of chain graphs}\label{sec:graph}

Let $\cH$ be a \textit{hybrid graph}, that is a graph with both directed and undirected edges, but neither loops nor multiple edges. This excludes also a situation when two nodes are connected by an undirected and a directed edge. We assume that the set of nodes of $\cH$ is labelled with $[m]=\{1,\ldots,m\}$. A directed edge (arrow) from $i$ to $j$ is denoted by $i\rightarrow j$  and an undirected edge between $i$ and $j$ is denoted by $i-j$. We write $i\cdots j$, and say that $i$ and $j$ are \textit{linked}, whenever we mean that either $i\rightarrow j$ or $i\leftarrow j$, or $i-j$.

An \textit{undirected path} between $i$ and $j$ in a hybrid graph $\cH$ is any sequence $k_1,\ldots,k_n$ of nodes such that $k_1=i$, $k_n=j$ and $k_i-k_{i+1}$ in $\cH$ for every $i=1,\ldots, n-1$. A \textit{semi-directed path} between $i$ and $j$ is any sequence $k_1,\ldots,k_n$ of nodes such that $k_1=i$, $k_n=j$ and either $k_i-k_{i+1}$ or $k_i\rightarrow k_{i+1}$ in $\cH$ for every $i=1,\ldots, n-1$ and $k_i\rightarrow k_{i+1}$ for at least one $i$. A \textit{directed path} between $i$ and $j$ in a hybrid graph $\cH$ is any sequence $k_1,\ldots,k_n$ of nodes such that $k_1=i$, $k_n=j$ and $k_i\rightarrow k_{i+1}$ in $\cH$ for every $i=1,\ldots, n-1$. A \textit{semi-directed cycle} in a hybrid graph $\cH$ is a sequence $k_1,\ldots,k_{n+1}= k_1$, $n\geq 3$ of nodes in $\cH$ such that $k_1,\ldots,k_n$ are distinct, and this sequence forms a semi-directed path. In a similar way we define a \textit{undirected cycle} and \text{directed cycle}. 

\begin{defn}
A \textit{chain graph} (or CG) is a hybrid graph without semi-directed cycles.
\end{defn}

A set of nodes $T$ is \textit{connected} in $\cH$, if for every $i,j\in T$ there exists an \emph{undirected} path between $i$ and $j$. Maximal connected subsets in $\cH$ with respect to set inclusion are called \textit{components} in $\cH$. The class of components of $\cH$ is denoted by $\sT(\cH)$. The elements of $\sT(\cH)$ form a partition of the set of nodes of $\cH$. For any subset $A\subseteq [m]$ of the set of vertices we define the \textit{induced graph} on $A$, denoted by $\cH_A$, as the graph with  set of nodes $A$ and for any two $i,j\in A$ we have $i\rightarrow j$, $j\rightarrow i$ or $i-j$ if and only if $i\rightarrow j$, $j\rightarrow i$ or $i-j$ in $\cH$, respectively.

Define the set of \emph{parents} of $A\subseteq [m]$, denoted by $p_\cH(A)$, as the set of $i\in [m]$ such that $i\rightarrow a$ in $\cH$ for some $a\in A$. The set of \emph{children} $c_\cH(A)$ is the set of $i\in [m]$ such that $a\rightarrow i$ in $\cH$ for some $a\in A$; and the set of neighbors  $n_\cH(A)$ is  the set of all $i\in [m]$ such that $i-a$ in $\cH$ for some $a\in A$. In addition we define
$$
N_{\cH}(i)\;\;:=\;\;\{i\}\cup n_{\cH}(i)\cup c_{\cH}(i).
$$

If $C$ is a connected set in a chain graph $\cH$, then there are no arrows between elements in $C$, for otherwise there would exist a semi-directed cycle. In particular, the induced graph $\cH_C$ on $C$ is an undirected graph and $p_\cH(C)$ is disjoint from $C$ for any $C\in \sT(\cH)$.  In addition, for every $A\subseteq [m]$ the induced subgraph $\cH_A$ of a chain graph $\cH$ is a chain graph itself. A \textit{clique} in an undirected graph is a subset of nodes such that any two nodes are linked. We say that a clique is \emph{maximal} if it is maximal with respect to inclusion. 
\begin{defn}
For any CG $\cH$ an \textit{immorality} is any induced subgraph of $\cH$ of the form $i\rightarrow j\leftarrow k$. A \textit{flag} is any induced subgraph of the form $i\rightarrow j-k$. A chain graph without flags is abbreviated by NF-CG.\end{defn}

 Undirected graphs and DAGs are chain graphs without flags.  We often use the following basic fact.
\begin{lem}\label{fac:nfs}
If $\cH$ is a NF-CG then $p_\cH(A)=p_\cH(T)$ for every $T\in \sT(\cH)$ and non-empty $A\subseteq T$. In particular for any two $i,j\in [m]$ such that $i-j$ in $\cH$ we have $p_\cH(i)=p_\cH(j)$.
\end{lem}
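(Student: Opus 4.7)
The plan is to prove the first statement by induction along undirected paths inside a component, and then derive the second statement as an immediate corollary. The containment $p_\cH(A)\subseteq p_\cH(T)$ is immediate from $A\subseteq T$, so the whole content of the lemma lies in the reverse containment: any parent of a node in $T$ must actually be a parent of every node in $T$.

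To show this, I would fix $p\in p_\cH(T)$, so that $p\to t$ for some $t\in T$, and an arbitrary $a\in A\subseteq T$. Since $T$ is a component, there is an undirected path $t=v_0-v_1-\cdots-v_k=a$ lying entirely in $T$. The core inductive step is: if $p\to v_i$ and $v_i-v_{i+1}$, then $p\to v_{i+1}$. I would establish this by ruling out all other possibilities for the pair $\{p,v_{i+1}\}$. First, if $p$ and $v_{i+1}$ were non-adjacent, the induced subgraph on $\{p,v_i,v_{i+1}\}$ would be the flag $p\to v_i - v_{i+1}$, contradicting that $\cH$ is a NF-CG. Second, if $p-v_{i+1}$, then $p$ would lie in the same component as $v_{i+1}$, and since $v_{i+1}$ is in the same component as $v_i$, also $p$ and $v_i$ would be in a common component; but then the directed edge $p\to v_i$ would be an arrow inside a component, contradicting the observation in the excerpt that components of a chain graph contain no arrows. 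Third, if $v_{i+1}\to p$, then $p\to v_i - v_{i+1}\to p$ would be a semi-directed cycle, contradicting the defining property of a chain graph. The only remaining possibility is $p\to v_{i+1}$, which closes the induction.

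Iterating this step along the path gives $p\to a$, hence $p\in p_\cH(A)$, proving $p_\cH(T)\subseteq p_\cH(A)$ and therefore equality. For the second statement, if $i-j$ in $\cH$ then $i$ and $j$ belong to the same component $T$, and applying the first part to the singletons $A=\{i\}$ and $A=\{j\}$ yields $p_\cH(i)=p_\cH(T)=p_\cH(j)$.

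The argument involves no serious obstacles; the only point requiring care is the exhaustive case analysis in the inductive step, where each of the three forbidden configurations (non-adjacency, $p-v_{i+1}$, $v_{i+1}\to p$) must be eliminated by invoking a different structural property (no flags, no arrows within a component, no semi-directed cycles, respectively). Once those cases are disposed of, the rest of the proof is essentially bookkeeping.
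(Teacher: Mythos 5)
Your proof is correct. Note that the paper itself states this lemma without proof (it is introduced only as a ``basic fact''), so there is no official argument to compare against; your induction along an undirected path within the component, with the three-way case analysis (no flags rules out non-adjacency of $p$ and $v_{i+1}$, absence of arrows within a component rules out $p-v_{i+1}$, and the no-semi-directed-cycle condition rules out $v_{i+1}\to p$), is exactly the standard argument that the authors are implicitly relying on. The only small point worth making explicit is that $p\neq v_{i+1}$ throughout, which follows since $p\in p_\cH(T)$ forces $p\notin T$ while every $v_{i+1}\in T$.
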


\begin{defn}
Let $\cH$ be a chain graph. For any two distinct components $T,T'\in \sT(\cH)$ consider the set of all arrows between $T$ and $T'$. If this set is non-empty then we call it a \textit{meta-arrow} and denote by $T\Rightarrow T'$. That is 
$$T\Rightarrow T':=\{i\rightarrow j:\, i\in T, j\in T', i\rightarrow j\mbox{ in }\cH\}.$$
\end{defn}
The notion of meta-arrow is important in the considerations of equivalence classes of chain graphs, which we discuss in the next section.

\subsection{Equivalence classes of chain graphs}\label{sec:equivalence} 

A chain graph model is given by all concentration matrices of the form (\ref{eq:parameter}). In Example \ref{ex:DAG3} we saw that two different chain graphs may give the same Gaussian models or equivalently the same set of conditional independence statements. If two NF-CGs $\cG$ and $\cH$ define the same chain graph model, we say that they are \textit{graph equivalent} (or simply \textit{equivalent}). For example the three DAGs in Figure \ref{fig:3DAGs} are equivalent. 
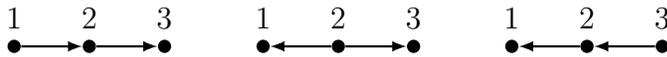
\begin{figure}[htp!]
\begin{center}
\tikzstyle{vertex}=[circle,fill=black,minimum size=5pt,inner sep=0pt]
  \begin{tikzpicture}
    \node[vertex] (1) at (0,0)  [label=above:$1$] {};
    \node[vertex] (2) at (1,0) [label=above:$2$]{};
    \node[vertex] (3) at (2,0) [label=above:$3$]{};
          \draw[->,-latex,line width=.3mm] (1) to (2);
    \draw[->,-latex,line width=.3mm] (2) to (3);
  \end{tikzpicture}\qquad
  \begin{tikzpicture}
    \node[vertex] (1) at (0,0)  [label=above:$1$] {};
    \node[vertex] (2) at (1,0) [label=above:$2$]{};
    \node[vertex] (3) at (2,0) [label=above:$3$]{};
          \draw[<-,-latex,line width=.3mm] (2) to (1);
    \draw[->,-latex,line width=.3mm] (2) to (3);
  \end{tikzpicture}\qquad
  \begin{tikzpicture}
    \node[vertex] (1) at (0,0)  [label=above:$1$] {};
    \node[vertex] (2) at (1,0) [label=above:$2$]{};
    \node[vertex] (3) at (2,0) [label=above:$3$]{};
          \draw[->,-latex,line width=.3mm] (2) to (1);
    \draw[->,-latex,line width=.3mm] (3) to (2);
  \end{tikzpicture}
  \end{center}
  \caption{Three equivalent DAGs. }\label{fig:3DAGs}
\end{figure}

The equivalence class of $\cH$ in the set of NF-CGs is denoted by $\la\cH\ra$: 
$$
\la\cH\ra=\{\cG:\,\cG\mbox{ is a NF-CG graph equivalent to } \cH\}.
$$
Equivalence of CGs and DAGs was discussed in many papers, for example \cite{andersson1997,frydenberg1990,roverato2005,vermapearl91}. We briefly list the most relevant results.
\begin{defn}
The \textit{skeleton} of a chain graph  $\cH$ is the undirected graph such that $i-j$ whenever $i\cdots j$ in $\cH$. 
\end{defn}
\begin{thm}\label{th:frydenberg}Two NF-CGs with the same set of nodes are equivalent if and only if they have the same skeleton and the same immoralities.
\end{thm}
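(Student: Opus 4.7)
The plan is to prove both directions by exploiting the c-separation criterion that characterizes conditional independence in NF-CG models. For the \emph{only if} direction, I would first show that equivalent NF-CGs share the same skeleton. The criterion is that two distinct nodes $i,j$ are linked in a NF-CG $\cH$ if and only if no conditioning set $S\subseteq [m]\setminus\{i,j\}$ satisfies $X_i\indep X_j\mid X_S$ in $M(\cH)$ (the ``only if'' here is classical; the ``if'' uses that an edge $i\cdots j$ cannot be separated by any $S$). Since equivalent models induce the same set of CI statements, the two skeletons must agree. Next, I would identify immoralities via a local three-node test: on an induced triple $\{i,j,k\}$ with $i\not\cdots k$ but $i\cdots j$ and $j\cdots k$, the triple is an immorality $i\to j\leftarrow k$ exactly when there exists a separating set $S$ for $i$ and $k$ with $j\notin S$, whereas for a non-immorality triple (which in a NF-CG forces $i-j-k$, the flag case being excluded) every separator of $i$ and $k$ must contain $j$. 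This dichotomy is CI-detectable, so equivalent graphs have the same immoralities.

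For the \emph{if} direction, I would invoke the c-separation (equivalently, moralization) criterion for NF-CG Markov models and argue that c-separation of two disjoint subsets given a third depends only on: (i) the skeleton, which enumerates the candidate paths; and (ii) the set of immoralities on these paths, which determines whether each interior triple is an active ``collider'' (immorality) or a ``non-collider'' (non-immorality). A path is active given $S$ precisely when every non-immorality triple on it has its middle node outside $S$ and every immorality triple has its middle node inside $S$ or has a descendant in $S$. Hence, if $\cH$ and $\cH'$ share the same skeleton and the same immoralities, the two collections of active paths coincide, yielding the same c-separation relations, hence the same Gaussian model.

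The main obstacle will be the descendant condition in the activation rule for immoralities, since ``descendants'' a priori depend on the orientations of directed edges, not just the skeleton and immoralities. The cleanest way around this is to show that the reachability relation via directed paths in a NF-CG is itself determined by skeleton plus immoralities, once one uses the no-flag condition together with the absence of semi-directed cycles: any two NF-CGs with coinciding skeleton and immoralities can be transformed into one another by a sequence of local ``legal'' operations (arrow reversals and arrow-to-line conversions) that preserve both invariants and the equivalence class. Equivalently, one shows that $\cH$ and $\cH'$ have the same essential graph $\cH^*$ (Proposition \ref{prop:essexists}), which uniquely represents the equivalence class. Since the results of Frydenberg \cite{frydenberg1990} and Andersson--Madigan--Perlman \cite{andersson2001} establish exactly this combinatorial fact, I would conclude by invoking their characterization rather than reproving the operations lemma in detail.
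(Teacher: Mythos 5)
The paper does not actually prove Theorem \ref{th:frydenberg}: it is imported from the literature, with the remark that Frydenberg's original statement \cite{frydenberg1990} is more general (all LWF chain graphs), the NF-CG case also following from \cite[Theorems 1 and 4]{andersson2001}. Since your argument also ends by ``invoking their characterization rather than reproving the operations lemma,'' you ultimately land where the paper does; the question is whether the sketch you give along the way is sound, and there it has two concrete problems.

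First, your claim that a non-immorality unshielded triple in an NF-CG ``forces $i-j-k$, the flag case being excluded'' is false. The no-flag condition only forbids an arrow pointing \emph{into} the endpoint of an undirected edge; the configurations $i\to j\to k$, $i\leftarrow j\to k$ and $i-j\to k$ (with $i$ and $k$ not linked) are all legal induced subgraphs of an NF-CG --- e.g.\ $1\to2\to3$ in Figure \ref{fig:3DAGs}. The separator test survives, because the relevant dichotomy is collider versus non-collider at the midpoint and only the immorality is a collider, but the classification as written is wrong. Second, the obstacle you yourself flag in the ``if'' direction --- that the collider-activation rule refers to descendants, hence to orientations not determined by skeleton plus immoralities --- is precisely the nontrivial content of the theorem, and your proposed remedy is circular as stated: Theorem \ref{th:roverato} and Proposition \ref{prop:essexists} are formulated for graphs \emph{already known to be equivalent}, so they cannot be used to establish that two graphs sharing skeleton and immoralities are equivalent without first proving a purely combinatorial version of that transformation lemma. (There is also a small unaddressed bridge between ``same Gaussian model'' and ``same conditional independence statements,'' which requires both that $M(\cH)$ is cut out by its CI constraints and that a generic member of $M(\cH)$ is faithful to $\cH$.) None of this is fatal if, like the paper, you simply cite \cite{frydenberg1990,andersson2001}; but as a self-contained proof the sketch is incomplete at exactly the step that matters.
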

The original statement of this result, given by Frydenberg in \cite{frydenberg1990}, is more general and applies to any chain graph in the  LWF definition of chain graph models.  

As was remarked in \cite{roverato2005} considering meta-arrows helps to understand equivalence classes of chain graphs. Suppose that we want to obtain one chain graph from another with the same skeleton by changing some of the arrows $i\rightarrow j$ to $i-j$ or $i\leftarrow j$. Changing only a subset  of arrows in a meta-arrow $T\Rightarrow T'$ is not permitted as it would introduce semi-directed cycles. Hence the only permitted operations on arrows of $\cH$, if we work in the class of CGs, is either changing the directions of all the elements of $T\Rightarrow T'$ or changing all arrows of $T\Rightarrow T'$ into undirected edges. The following basic operation on a chain graph was defined in \cite{roverato2005,studeny2004}.
\begin{defn}\label{def:merging}Let $\cH$ be a NF-CG and let $T\Rightarrow T'$ be a meta-arrow in $\cH$ where $T,T'\in \sT(\cH)$. \textit{Merging} of $T$ and $T'$ is an operation of changing all elements of the meta-arrow $T\Rightarrow T'$ into undirected edges. Merging is called \textit{legal} if 
\begin{itemize}
\item[(a)] $p_{\cH}(T')\cap T$ is a clique of $T$;
\item[(b)] $p_{\cH}(T')\setminus T=  p_{\cH}(T)$.
\end{itemize}
\end{defn}

\begin{lem}Let $\cH$ be a NF-CG and let $\cH'$ be a graph obtained from $\cH$ by legal merging of two connected components. Then $\cH'\in \la\cH\ra$. 
\end{lem}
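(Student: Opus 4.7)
The plan is to invoke Theorem \ref{th:frydenberg}: it suffices to verify that $\cH'$ is a NF-CG with the same skeleton and the same immoralities as $\cH$. The skeleton part is free, since merging only turns directed edges into undirected ones. So the work splits into three tasks: (i) $\cH'$ has no semi-directed cycles; (ii) $\cH'$ has no flags; (iii) $\cH$ and $\cH'$ have exactly the same immoralities.

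For (i), I would look at the quotient DAG on $\sT(\cH)$ and show that contracting the meta-arrow $T\Rightarrow T'$ produces another DAG. A problematic semi-directed cycle in $\cH'$ would come from a directed path $T\Rightarrow T_1\Rightarrow\cdots\Rightarrow T_k\Rightarrow T'$ in the quotient of $\cH$ with $k\geq 1$, because together with the contracted edge it would form a cycle. But then $T_k\in p_\cH(T')\setminus T$, which by condition~(b) equals $p_\cH(T)$, giving $T_k\Rightarrow T$ and a cycle already in the quotient of $\cH$, contradicting the assumption that $\cH$ is a chain graph. For (ii), suppose $i\to j-k$ is an induced flag in $\cH'$. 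Using that $\cH$ is flag-free, one may reduce to the case where $j-k$ is one of the freshly undirected edges of the meta-arrow; then Lemma~\ref{fac:nfs} propagates $i\to j$ (with $i\in p_\cH(T)\cup p_\cH(T')$) to an arrow $i\to k$ in $\cH$. The key check is that this arrow is not itself in the contracted meta-arrow, which follows because $i$ cannot lie in $T$ (otherwise $i\to j$ would have been flattened) nor in $T'$ (by acyclicity of the quotient). So $i$ and $k$ remain linked in $\cH'$ and no induced flag appears.

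For (iii), the ``$\cH'\Rightarrow\cH$'' direction is immediate because the edge sets and skeletons agree and any arrow in $\cH'$ was an arrow in $\cH$. The converse is the main obstacle and is where both legality conditions are truly used. Starting with an immorality $i\to j\leftarrow k$ in $\cH$ with $i,k$ unlinked, I would case-split on whether the arrows $i\to j$ or $k\to j$ belong to the meta-arrow $T\Rightarrow T'$. If both do, then $i,k\in p_\cH(T')\cap T$, which by condition~(a) is a clique, so $i-k$ in $\cH$, contradicting $i,k$ unlinked. If exactly one (say $i\to j$) belongs to the meta-arrow, then $i\in T$, $j\in T'$, and $k\in p_\cH(T')\setminus T=p_\cH(T)$ by condition~(b); applying Lemma~\ref{fac:nfs} to the component $T$ containing $i$ yields $k\to i$ in $\cH$, again contradicting ``unlinked''. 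In the remaining case neither arrow is in the meta-arrow, so both survive in $\cH'$ and the immorality is preserved. Putting (i)--(iii) together, Theorem~\ref{th:frydenberg} gives $\cH'\in\la\cH\ra$. The structural heart of the argument is step (iii), since it is the one place where conditions (a) and (b) both have to be invoked in an essential way.
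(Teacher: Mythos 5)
Your proof is correct, but it is a genuinely different route from the paper's: the paper does not prove this lemma at all, it simply cites the proof of Lemma 22 in \cite{studeny2009}. Your self-contained verification via Theorem \ref{th:frydenberg} is sound and has the virtue of making explicit exactly where the two legality conditions enter: condition (a) kills immoralities whose two arrows both lie in the meta-arrow, and condition (b), combined with Lemma \ref{fac:nfs}, handles both the mixed immorality case and the flag case, as well as the acyclicity of the contracted component quotient. Two small points worth tightening if you write this up: in step (i) you should also dispose of the symmetric possibility of a directed path $T'\Rightarrow\cdots\Rightarrow T$ in the quotient of $\cH$ (it is immediate, since together with $T\Rightarrow T'$ it would already be a cycle in $\cH$'s quotient); and in step (ii) the ``key check'' that the derived arrow $i\to k$ is not itself in the meta-arrow is not actually needed --- even if it were, $i$ and $k$ would still be linked in $\cH'$ because the skeleton is preserved, which already contradicts the flag being induced. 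Neither point affects the validity of the argument.
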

\begin{proof}See for example the proof of Lemma 22 in \cite{studeny2009}.
\end{proof}

For two distinct CGs $\cG$, $\cH$ with the same skeleton we write $\cG\subseteq \cH$ if, whenever $i\rightarrow j$ in $\cG$, then either $i\rightarrow j$ or $i-j$ in $\cH$, and whenever $i-j$ in $\cG$, then $i-j$ in $\cH$. We write $\cG\subset \cH$ if $\cG\subseteq \cH$ and $\cG\neq \cH$.

\begin{thm}[Roverato,Studeny \cite{roverato2005,studeny2004}]\label{th:roverato} Let $\cG$ and $\cH$ be two equivalent NF-CGs such that $\cG\subset \cH$. Then there exists a finite sequence $\cG=\cG_0\subset \cdots \subset \cG_r=\cH$, with $r\geq 1$, of equivalent NF-CGs such that, for all $i=1,\ldots, r$ $\cG_i$ can be obtained from $\cG_{i-1}$ by a legal merging of two connected components of $\cG_{i-1}$. \end{thm}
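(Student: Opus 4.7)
The plan is to proceed by induction on $r := |E^u(\cH) \setminus E^u(\cG)|$, the number of edges that are directed in $\cG$ but undirected in $\cH$, where $E^u$ denotes the set of undirected edges. Since $\cG \subsetneq \cH$ share the same skeleton, $r \geq 1$. It suffices to exhibit a single legal merging of two components of $\cG$ that produces a NF-CG $\cG_1$ satisfying $\cG \subsetneq \cG_1 \subseteq \cH$; the lemma immediately preceding this theorem then ensures $\cG_1 \in \la\cG\ra$, and the induction hypothesis applied to $(\cG_1, \cH)$ completes the sequence.

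To locate the merging, I would fix an edge $i-j$ of $\cH$ that is directed in $\cG$, and let $U$ be the connected component of $\cH$ containing $i$ and $j$. Consider the restriction $\cG|_U$, which is itself a NF-CG whose connected components partition $U$ into a DAG under the meta-arrow relation (the absence of semi-directed cycles in $\cG$ precludes meta-cycles). I would then choose $T'$ to be a sink of this DAG and $T$ to be a component with $T \Rightarrow T'$ in $\cG$. A preliminary observation I would establish first: whenever an arrow of $\cG$ connects two components of $\cG|_U$, the entire meta-arrow containing it is converted to undirected edges in $\cH$, since $U$ is a single undirected component of $\cH$ and any surviving arrow would separate its endpoints.

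The main work is verifying the two conditions of Definition \ref{def:merging} for $(T, T')$. For (a), any two $a, b \in p_\cG(T') \cap T$ share a common child $j \in T'$; were $a$ and $b$ not adjacent in $\cG$, then $a \to j \leftarrow b$ would be an immorality of $\cG$ and hence, by Theorem \ref{th:frydenberg}, also of $\cH$---but in $\cH$ both arrows are undirected, a contradiction. So $a$ and $b$ are adjacent in $\cG$; since they share the component $T$, they are linked by an undirected edge, proving (a). For (b), the inclusion $p_\cG(T') \setminus T \supseteq p_\cG(T)$ is the delicate step: given $c \in p_\cG(T)$, Lemma \ref{fac:nfs} upgrades $c$ to a parent of every vertex of $T$. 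A case analysis on the location of $c$ uses the sink choice of $T'$ to rule out $c \in T'$ (which would combine with $T \Rightarrow T'$ to give a meta-cycle, hence a semi-directed cycle in $\cG$), while the no-flag property in $\cH$ applied to the configuration $c \to a - j$ (for $a \in p_\cG(T') \cap T$ and $j \in T'$) forces $c \cdots j$ in $\cH$ and hence, by the shared skeleton, in $\cG$; a further semi-directed-cycle argument then pins down the direction as $c \to j$. The reverse inclusion $p_\cG(T') \setminus T \subseteq p_\cG(T)$ follows by iterating the flag rule from a vertex $c' \in p_\cG(T') \setminus T$ along the undirected paths of $T$ that emanate from any child of $c'$ lying in $T$.

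I expect condition (b) to be the main obstacle, since simultaneously locating the vertex $c$ within the component structures of both $\cG$ and $\cH$ requires careful joint use of the absence of flags, the absence of semi-directed cycles, and the invariance of skeleton and immoralities under equivalence; in particular, one may have to refine the choice of $T$ among the parents of the sink $T'$ so that no component of $\cG|_U$ with a meta-arrow into $T$ disrupts the argument. Once both legality conditions have been verified, the merging of $T$ and $T'$ converts all arrows of the meta-arrow $T \Rightarrow T'$ into undirected edges; the resulting $\cG_1$ satisfies $\cG \subsetneq \cG_1 \subseteq \cH$ and lies in $\la\cG\ra$ by the preceding lemma, strictly decreasing $r$ and completing the inductive step.
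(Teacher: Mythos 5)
The paper does not actually prove this statement---it is quoted from Roverato and Studen\'y with a citation---so there is no internal proof to compare against; I can only judge your attempt on its own. Your inductive framework is sound: it does suffice to exhibit one legal merging carrying $\cG$ to some $\cG_1$ with $\cG\subset\cG_1\subseteq\cH$, since the lemma preceding the theorem keeps $\cG_1$ in $\la\cG\ra$ and your measure strictly drops (your preliminary observation that all edges of $\cG$ inside a single undirected component $U$ of $\cH$ are undirected in $\cH$ is also correct). The verification of condition (a) via shared immoralities is fine.

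The gap is in the selection of the pair $(T,T')$: taking $T'$ to be a sink of the component DAG of $\cG|_U$ and $T$ any component with $T\Rightarrow T'$ does not in general give a legal merging, and no refinement of $T$ among the parents of that sink can repair this. Take $\cG:\ 1\to 2\to 3$ and $\cH:\ 1-2-3$; these are equivalent NF-CGs with $\cG\subset\cH$ and $U=\{1,2,3\}$. The unique sink is $T'=\{3\}$ and its unique parent component is $T=\{2\}$, yet $p_\cG(T')\setminus T=\emptyset$ while $p_\cG(T)=\{1\}$, so condition (b) fails; equivalently, the merging would create the flag $1\to 2-3$, which is exactly what (b) is designed to forbid. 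The legal first merging here is at the opposite end of the order, namely $\{1\}$ with $\{2\}$, so the pair must be chosen ``from the top'' of the component order, and identifying which meta-arrow admits a legal merging is precisely the hard content of the theorem that your sketch does not supply. Relatedly, your argument for the inclusion $p_\cG(T)\subseteq p_\cG(T')$ already breaks on this example: with $c=1$, $a=2$, $j=3$, the configuration $c-a-j$ in $\cH$ exhibits neither a flag nor an immorality, so nothing forces an edge $c\cdots j$ to exist, and indeed it does not.
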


By the following proposition there is always a unique NF-CG representing $\la \cH\ra$ with the largest number of undirected edges. 
\begin{prop}[Roverato,Studeny \cite{roverato2005,studeny2004}]\label{prop:essexists} There exists a unique  element $\cH^*$ in $\la\cH\ra$ that is maximal in the sense that  $\cH'\subseteq \cH^*$ for every $\cH'\in \la\cH\ra$. 
\end{prop}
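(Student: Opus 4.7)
The plan is to produce a maximum $\cH^*$ of the poset $(\la\cH\ra, \subseteq)$; since a maximum is automatically unique, existence and uniqueness would follow at once.

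First I would produce a $\subseteq$-maximal element. Start from any representative $\cG_0 \in \la\cH\ra$ (e.g.\ $\cG_0 = \cH$) and iteratively apply legal mergings. Each legal merging strictly enlarges the graph with respect to $\subseteq$ while keeping it in $\la\cH\ra$, and since the common skeleton has only finitely many edges, the process terminates at some $\cH^* \in \la\cH\ra$ admitting no legal merging. By Theorem~\ref{th:roverato}, any $\cH' \in \la\cH\ra$ with $\cH^* \subsetneq \cH'$ would produce a chain of legal mergings starting at $\cH^*$, contradicting termination, so $\cH^*$ is $\subseteq$-maximal in $\la\cH\ra$.

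Next I would show that this $\cH^*$ is in fact the maximum. Given an arbitrary $\cH' \in \la\cH\ra$, define a hybrid graph $\cH^* \vee \cH'$ on the common skeleton (identical by Theorem~\ref{th:frydenberg}) by placing $i-j$ whenever at least one of $\cH^*, \cH'$ carries $i-j$, and keeping the common direction otherwise. To make this well-defined I would first establish a reversibility lemma: whenever two equivalent NF-CGs disagree on the orientation of a common directed edge, some equivalent NF-CG has that edge undirected. Together with $\subseteq$-maximality of $\cH^*$ (which therefore already carries every reversible edge as undirected), this rules out inconsistent orientations between $\cH^*$ and $\cH'$ and makes $\cH^* \vee \cH'$ well-defined. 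Once $\cH^* \vee \cH' \in \la\cH\ra$ is established, by construction $\cH^* \subseteq \cH^* \vee \cH'$ and $\cH' \subseteq \cH^* \vee \cH'$, and maximality forces $\cH^* = \cH^* \vee \cH' \supseteq \cH'$, as desired.

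The main obstacle is verifying that $\cH^* \vee \cH'$ lies in $\la\cH\ra$, i.e.\ that the combined graph is a NF-CG with the same immoralities as $\cH$ (skeleton coincidence is automatic). Simultaneously replacing arrows by undirected edges coming from two independent sources can in principle create a semi-directed cycle or an induced flag $i \to j - k$ at the interface between the two modifications, or destroy/create an immorality; ruling this out requires tracking meta-arrows in both $\cH^*$ and $\cH'$ and using both conditions of Definition~\ref{def:merging} to control what happens on component boundaries. This combinatorial core is essentially the content of the Roverato--Studen\'{y} analysis in \cite{roverato2005,studeny2004}, which is why the proposition is quoted from there rather than reproved.
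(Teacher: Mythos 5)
The paper does not actually prove this proposition: it is quoted from Roverato and Studen\'y, and the surrounding text only records the auxiliary results (Theorem~\ref{th:frydenberg}, the legal-merging lemma, Theorem~\ref{th:roverato}) on which their proof rests. So the only question is whether your sketch stands as a complete argument on its own, and it does not. The first half is fine: saturating legal mergings from $\cH$ terminates, because each merging strictly increases the number of undirected edges on a fixed finite skeleton while staying inside $\la\cH\ra$, and Theorem~\ref{th:roverato} then shows the terminal graph is $\subseteq$-maximal, since any strictly larger equivalent graph would force some legal merging to still be applicable. That is a correct use of the quoted machinery.

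The gap sits in the second half, which is where all the content of the proposition lives: a finite poset always has maximal elements, so the real assertion is that the maximal element is a \emph{maximum}. Your join construction $\cH^* \vee \cH'$ rests on two claims you do not establish: first, the ``reversibility lemma'' needed to rule out opposite orientations of the same skeleton edge in two equivalent NF-CGs neither of which has it undirected, and second, membership of the join in $\la\cH\ra$ (no semi-directed cycles, no flags created at the interface, immoralities preserved). You explicitly defer both to the Roverato--Studen\'y analysis. Neither follows from Theorem~\ref{th:roverato}, which only compares graphs that are already $\subseteq$-comparable and says nothing about confluence of the merging process starting from incomparable members of the class. What you have written is therefore a correct reduction of the proposition to the cited literature together with an honest flag of what remains, not a proof: as a blind attempt it has a genuine gap at its central step.
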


\begin{defn}\label{def:essential}Let $\cH$ be a NF-CG. The graph $\cH^*$ of Proposition \ref{prop:essexists} is called the \textit{essential graph}. The directed arrows in $\cH^{*}$ are called \textit{essential}. For notational convenience we write $p^*(A)$, $n^*(A)$ and $c^*(A)$ for $p_{\cH^*}(A)$, $n_{\cH^*}(A)$ and $c_{\cH^*}(A)$ respectively. 
\end{defn}

By definition $\cH^*$ has the same skeleton as $\cH$, and an edge is essential if and only if it occurs as an arrow with the same orientation in every $\cH'\in \la\cH\ra$; all other edges are undirected. For example, the essential graph for any of the graphs in Figure \ref{fig:3DAGs} is the undirected graph $\overset{1}{\bullet}-\overset{2}{\bullet}-\overset{3}{\bullet}$, whereas the essential graph of $\cH=\overset{1}{\bullet}\rightarrow \overset{2}{\bullet}\leftarrow \overset{3}{\bullet}$ is $\cH$ itself.  By Theorem \ref{th:frydenberg}, every arrow that participates in an immorality in $\cH$ is essential, but $\cH$ may contain other essential arrows. For example, in the DAG in Figure \ref{fig:DAGallessential} all arrows are essential but not all of them form immoralities.

\begin{figure}[htp!]
\begin{tikzpicture}
\tikzstyle{vertex}=[circle,fill=black,minimum size=5pt,inner sep=0pt]
    \node[vertex] (1) at (0,0)   [label=below:$1$]{};
    \node[vertex] (2) at (1,0) [label=below:$3$]{};
    \node[vertex] (3) at (.5,1) [label=above:$2$]{};
    \node[vertex] (4) at (1.5,1) [label=above:$4$]{};    
    \draw[->,-latex,line width=.3mm] (1) to (2);
    \draw[->,-latex,line width=.3mm] (3) to (2);
    \draw[->,-latex,line width=.3mm] (1) to (3);
    \draw[->,-latex,line width=.3mm] (4) to (3);
  \end{tikzpicture}
  \caption{A NF-CG whose arrows are all essential but not all part of  immoralities.}\label{fig:DAGallessential}
  \end{figure}
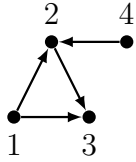

The following result has been independently observed in \cite{roverato2005,studeny2004}.
\begin{thm}
If $\la \cH\ra$ contains a DAG $\cG$, then the essential graph $\cH^*$ is equal to the essential graph of a DAG as defined in \cite{andersson1997}. \end{thm}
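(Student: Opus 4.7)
The plan is to show that $\cH^*$ coincides with the AMP essential graph of $\cG$, which I denote $\cG^{\rm AMP}$, by proving the two inclusions $\cG^{\rm AMP} \subseteq \cH^*$ and $\cH^* \subseteq \cG^{\rm AMP}$ under the order $\subseteq$ on chain graphs with common skeleton introduced earlier.

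The first inclusion is the easy direction. By the results of \cite{andersson1997}, $\cG^{\rm AMP}$ is a NF-CG that is Markov equivalent to $\cG$, and in particular lies in $\la\cG\ra = \la\cH\ra$. By the maximality of $\cH^*$ established in Proposition \ref{prop:essexists}, I immediately obtain $\cG^{\rm AMP} \subseteq \cH^*$. Unpacking the definition of $\subseteq$, one checks that this forces every arrow of $\cH^*$ to be an arrow of $\cG^{\rm AMP}$ with the same orientation.

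The reverse inclusion requires showing that every arrow $i \to j$ of $\cG^{\rm AMP}$ is still an arrow of $\cH^*$, equivalently, an arrow of every NF-CG $\cH' \in \la\cH\ra$. Suppose for contradiction that some $\cH' \in \la\cH\ra$ has either $j \to i$ or $i - j$ in place of $i \to j$. I would then construct an equivalent DAG $\cG' \in \la\cG\ra$ with $j \to i$, contradicting the AMP-essentiality of $i \to j$ in $\cG^{\rm AMP}$. The construction proceeds by orienting each undirected component of $\cH'$ according to a perfect elimination ordering (PEO), producing a DAG with the same skeleton; when $\cH'$ has $i - j$, one further chooses the PEO in the component containing $i$ and $j$ so that $j$ precedes $i$, so that the induced orientation points from $j$ to $i$.

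The main technical obstacle is in justifying two standard but nontrivial ingredients: (i) every undirected component of a NF-CG equivalent to a DAG is chordal, so that PEOs exist — a non-chordal induced cycle would force an internal immorality in any DAG orientation, yet by equivalence $\cH'$ has no immoralities strictly inside any component; and (ii) in a chordal graph, any given edge can be oriented in either direction by a suitable PEO, since every simplicial vertex may begin some PEO. Additionally one must verify that the resulting $\cG'$ is Markov equivalent to $\cH'$: the PEO-orientation produces no new immoralities within a component, and because $\cH$ has no flags, Lemma \ref{fac:nfs} ensures that every arrow into a component targets the same parent set shared by the whole component, ruling out new immoralities or flags involving the parents as well. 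These two verifications together yield $\cH^* = \cG^{\rm AMP}$.
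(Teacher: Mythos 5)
The paper does not actually prove this statement: it records it as ``independently observed'' in \cite{roverato2005,studeny2004} and leaves the argument to those references. Your proposal is therefore a genuinely different route --- a self-contained two-inclusion proof --- and its overall architecture is sound. The easy inclusion correctly combines the facts from \cite{andersson1997} that the DAG essential graph is an NF-CG with the same skeleton and immoralities as $\cG$ (hence lies in $\la\cH\ra$ by Theorem \ref{th:frydenberg}) with the maximality in Proposition \ref{prop:essexists}; and the hard inclusion correctly reduces to the statement that any $\cH'\in\la\cH\ra$ in which $i\to j$ is undirected or reversed can be oriented into a DAG $\cG'$ equivalent to $\cG$ without the arrow $i\to j$. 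Your verification that $\cG'$ keeps exactly the immoralities of $\cH'$ is the right place to use the no-flag hypothesis via Lemma \ref{fac:nfs} (a fresh arrow $t'\to t$ inside a component cannot create an immorality with a parent $p$ of the component, since $p$ is also a parent of $t'$), and chordality of the components follows, as you indicate, from the fact that the induced subgraph of $\cG$ on a chain component of $\cH'$ is a DAG without immoralities.

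One justification is off target, though it is repairable. To force $j\to i$ you need $j$ to be a \emph{source} of an immorality-free acyclic orientation of its (chordal, connected) component, i.e.\ you need a perfect elimination ordering that \emph{ends} at the prescribed vertex $j$; the fact you invoke --- that every simplicial vertex may \emph{begin} some PEO --- does not deliver this, since $j$ need not be simplicial. The correct standard fact is that maximum cardinality search (or LexBFS) started at an arbitrary vertex $j$ of a connected chordal graph produces an ordering whose reverse is a PEO, so $j$ can always be made the unique source and the edge $i-j$ oriented as $j\to i$ without creating v-structures. With that substitution (and care about which end of the PEO the arrows point to), your argument goes through; the paper's alternative is simply to quote \cite{roverato2005,studeny2004}, which derive the statement from their general theory of legal mergings and the largest chain graph of an equivalence class.
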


\begin{rem}\label{rem:essential}
Our terminology is consistent with \cite{roverato2005}. However,  in  \cite{andersson2006} the essential graph for a chain graph is defined in a different way and it corresponds to the essential graph $\cH^*$ only if $\la\cH\ra$ contains a DAG. \end{rem}

\section{Subdeterminants of concentration matrices}\label{sec:subdet}

Let $\cH$ be any chain graph on $[m]$. We want to determine which
sub-determinants of the concentration matrix of the corresponding model
are identically zero on the model. This provides simple necessary conditions for a concentration matrix to lie in $\cK(\cH)$. We will use the following
combinatorial notions.

\begin{defn}
A {\em cup} in $\cH$ is a quadruple $(i,j,k,l)$ of vertices in $\cH$ where
\begin{enumerate}
\item either $i=j$ or $i \to j$; {\em and}
\item either $j=k$ or $j-k$; {\em and}
\item either $k=l$ or $k \ot l$.
\end{enumerate}
We say that the cup {\em starts} in $i$ and {\em ends} in $l$. 
\end{defn}

\begin{defn}
Let $A$ and $B$ be sets of vertices of $\cH$ of the same cardinality
$d$. A {\em cup system} from $A$ to $B$ is a set $U$ of $d$ cups in
$\cH$ whose starting points exhaust $A$ and whose end points exhaust
$B$.  The cup system $U$ from $A$ to $B$ gives rise to a bijection
$A \to B$ that sends $a \in A$ to the end point of the cup in $U$
that starts with $a$. After fixing labellings $A=\{a_1,\ldots,a_d\}$
and $B=\{b_1,\ldots,b_d\}$ this bijection gives rise to a permutation
of $[d]$; define $\sgn(U)$ to be the sign of this permutation.
The cup system $U$ from $A$ to $B$ is said to be {\em self-avoiding} if
for each $k =1,2,3,4$ the elements $u_k \in [m]$ of $u=(u_{1},u_{2},u_{3},u_{4}) \in U$ are all distinct.
\end{defn}

For the graph $\overset{1}{\bullet}\to \overset{2}{\bullet}\ot \overset{3}{\bullet}$ there is no self-avoiding cup system from $\{1,2\}$ to $\{2,3\}$ but there is such a system between $\{1\}$ and $\{3\}$. 

\begin{defn}
Let $\lambda_{ij}$ be the parameters corresponding to arrows $i \to j$ in
$\cH$ and let $\omega_{ij}$ be the parameters corresponding to undirected
edges $i-j$ and to the diagonal ($\omega_{ii}$). The {\em weight} of a cup
$(i,j,k,l)$ in $\cH$ is the product of the $(i,j)$ entry of $(I-\Lambda)$,
the $(j,k)$-entry of $\Omega$, and the $(k,l)$-entry of $(I-\Lambda)^T$,
which is the $(l,k)$-entry of $(I-\Lambda)$. The weight of a cup system
$U$ from $A$ to $B$, denoted $w(U)$, is the product of the weights of
the cups in $U$. This is a monomial of degree $k$ in the $\omega_{ij}$
times a monomial of degree at most $k$ in the variables $-\lambda_{ij}$.
\end{defn}

Let $K[A,B]$ denote the $A \times B$-submatrix of
$K=(I-\Lambda)\Omega(I-\Lambda)^T$. By expanding the entries, we find that
\begin{equation}  \label{eq:Expansion}
\det K[A,B]=\sum_{U} \sgn(U) w(U), 
\end{equation}
where the sum is over all cup systems $U$ from $A$ to $B$.  In this
expression cancellation can occur because of the signs $\sgn(U)$
(not because of the signs in the $-\lambda_{ij}$, which we might as
well have taken as new variables). The following proposition captures
exactly which terms cancel. For more details on the arguments, we refer
to \cite{sullivant2008tsg,draisma2013positivity}.

\begin{prop}\label{prop:monomialsInDet}
Relative to the fixed labellings of $A$ and $B$, the $A \times
B$-subdeterminant of $K$ equals
\[ \det K[A,B]=\sum_{U \text{ self-avoiding}} \sgn(U) w(U). \]
Moreover, for any two self-avoiding cup systems $U$ and $U'$ with
$w(U)=w(U')$ we have $\sgn(U)=\sgn(U')$.
\end{prop}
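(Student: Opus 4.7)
The plan is to apply a Lindström--Gessel--Viennot-type involution to cancel contributions of non-self-avoiding cup systems in (\ref{eq:Expansion}), and then to deduce sign-uniqueness among self-avoiding systems of equal weight from the structure of the monomial $w(U)$.

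For the first statement, I would construct an involution $\phi$ on the set of non-self-avoiding cup systems from $A$ to $B$ that preserves $w(U)$ and reverses $\sgn(U)$, so that matched pairs cancel in (\ref{eq:Expansion}). The construction uses a tail-swap: given a non-self-avoiding $U$, choose by a fixed rule (e.g.\ lex-smallest collision) two cups $u = (u_1, u_2, u_3, u_4)$ and $u' = (u'_1, u'_2, u'_3, u'_4)$ in $U$ that share a vertex at some position $p \in \{2, 3\}$. Note that collisions in positions $1$ or $4$ are impossible, because starting points exhaust $A$ and ending points exhaust $B$ (both sets of distinct elements). For $p=2$, replace $u, u'$ by $\tilde{u} := (u_1, u_2, u'_3, u'_4)$ and $\tilde{u}' := (u'_1, u'_2, u_3, u_4)$; for $p=3$ do the symmetric replacement. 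The tuples $\tilde u, \tilde u'$ are valid cups because the defining constraints only involve consecutive entries and $u_2 = u'_2$ (resp.\ $u_3 = u'_3$). The multiset of $\lambda$- and $\omega$-factors is unchanged, so $w(\phi(U)) = w(U)$; the induced bijection $A \to B$ is composed with the transposition $u_4 \leftrightarrow u'_4$, so $\sgn(\phi(U)) = -\sgn(U)$; and the selection rule can be arranged so that $\phi \circ \phi = \mathrm{id}$.

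For the second statement, suppose $U, U'$ are self-avoiding with $w(U) = w(U')$. From the $\lambda$-factors of $w(U)$ we recover which arrows of $\cH$ appear as first or third segments of the cups (using whether the tail vertex lies in $A$ or $B$), and from the $\omega$-factors we recover the central segments. Self-avoidance guarantees that the middle positions of cups form sets rather than multisets, so the reconstruction of the cup system from $w(U)$ is rigid except for the symmetry $\omega_{jk} = \omega_{kj}$, which permits swapping the roles of positions $2$ and $3$ for certain cups. A closed-walk analysis, analogous to the trails-based arguments in \cite{sullivant2008tsg, draisma2013positivity}, shows that any relabelling of $U$ compatible with $w(U)$ alters the induced bijection $A \to B$ by an even composition of transpositions, giving $\sgn(U) = \sgn(U')$.

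The tail-swap involution is a direct adaptation of the Lindström--Gessel--Viennot mechanism and I expect it to be routine. The main obstacle is the second statement: one must organize the ambiguities arising from $\Omega$-symmetry so that the parity of the induced permutation is controlled, and here the self-avoiding hypothesis is indispensable because it is what lets one decompose the symmetric difference of $U$ and $U'$ into cycles of even length on the graph $\cH$.
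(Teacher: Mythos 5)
Your tail-swap involution for the first identity is essentially identical to the paper's argument: the paper also orders the cups by their starting points, picks the minimal position $a\in\{2,3\}$ at which a collision occurs and the lexicographically minimal colliding pair, and swaps the tails at that position; weight is preserved because the shared entry makes the relevant $\Omega$- and $(I-\Lambda)$-factors interchangeable, the sign flips because the endpoints of two cups are transposed, and the fixed selection rule survives the swap so the map is an involution. Your observation that collisions at positions $1$ and $4$ cannot occur is correct and implicit in the paper. This half of your proposal is fine.

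The gap is in the second statement, which is also where the paper admits the argument is ``more subtle.'' The paper does not attempt a direct combinatorial proof: it maps $\cH$ to an auxiliary DAG (reverse all arrows, replace each undirected edge $i-j$ by $i \ot k \to j$ with $k$ a new vertex) so that self-avoiding cup systems become trek systems without sided intersection, and then invokes \cite[Theorem 3.3]{draisma2013positivity}. Your substitute --- ``a closed-walk analysis \dots shows that any relabelling of $U$ compatible with $w(U)$ alters the induced bijection $A\to B$ by an even composition of transpositions'' --- is precisely the content of the claim and is asserted rather than proved. Moreover, the one piece of reasoning you do offer is not sound as stated: decomposing the symmetric difference of $U$ and $U'$ into cycles of even length in $\cH$ does not by itself control parity, since an alternating cycle using $2k$ edges typically induces a $k$-cycle on the starting points, whose sign is $(-1)^{k-1}$ and so depends on $k$. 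You would need the finer bookkeeping of how self-avoidance constrains these alternating cycles (this is exactly what the trek-system theorem packages), or else you should follow the paper and reduce to that known result. As written, the second half of your proof does not go through.
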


\begin{proof}
To see that the sum in \eqref{eq:Expansion} can be restricted
to self-avoiding cup systems $U$, we proceed as in the
Lindstr\"om-Gessel-Viennot lemma \cite[Theorem 1]{gessel89} and give a sign-reversing
involution $\sigma$ on the set of non-self-avoiding cup systems, as
follows. Order any cup system $U$ from $A$ to $B$ as $\{u_1,\ldots,u_d\}$
where $u_i$ starts in $a_i$.  If $U$ is not self-avoiding, let $a \in
\{2,3\}$ be minimal such that the entries $u_{ia}, i \in [d]$ are not
all distinct, and let $(i,i')$ be a lexicographically minimal pair such
that $u_{ia}=u_{i'a}$. Then $\sigma(U)$ is the cup system obtained from
$U$ by replacing $u_i$ and $u_{i'}$ by their swaps at position $a$. For
instance, if $a=2$, then $u'_i=(u_{i1},u_{i2}=u_{i'2},u_{i'3},u_{i'4})$
and $u'_{i'}=(u_{i'1},u_{i'2}=u_{i2},u_{i3},u_{i4})$; and similarly for
$a=3$. Now $\sgn(U')=-\sgn(U)$ and $\sigma$ is indeed an involution.
This proves the expression in the proposition.  The second statement is
more subtle, but it follows by applying \cite[Theorem 3.3]{draisma2013positivity} 
to the DAG obtained from $\cH$ by reversing all arrows and replacing
all undirected edges $i-j$ by a pair $i \ot k \to j$ of arrows,
where $k$ is a new vertex. Indeed, self-avoiding cup systems in $\cH$
correspond to special types of trek systems without sided intersection
in that new graph. 
\end{proof}

Note that the set of covariance matrices in the model is captured by which subdeterminants vanish identically --- indeed, the conditional independence statements already suffice for this, and they are determinants (see for example \cite[Proposition 3.1.13]{oberwolfach2009}) --- but we do not know if this is true for the set of concentration matrices as well. Therefore, Proposition \ref{prop:monomialsInDet} may well have other statistical applications, but in
what follows, we will mostly use the following direct consequence.

\begin{cor} \label{cor:AB}
The subdeterminant $\det K[A,B]$ is identically zero on the model
corresponding to $\cH$ if and only if there does not exist a self-avoiding
cup system from $A$ to $B$ in $\cH$.
\end{cor}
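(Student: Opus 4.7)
The corollary follows directly from Proposition \ref{prop:monomialsInDet}, which supplies the cancellation-free expansion
\[
\det K[A,B] \;=\; \sum_{U \text{ self-avoiding}} \sgn(U)\, w(U),
\]
together with the second assertion that self-avoiding cup systems sharing the same weight monomial carry the same sign. The plan is to exploit this in the obvious way and to spell out the passage from ``identically zero on the model'' to ``identically zero as a polynomial in the parameters $\lambda_{ij}, \omega_{ij}$''.

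The ``if'' direction is immediate: if no self-avoiding cup system from $A$ to $B$ exists, the sum in Proposition \ref{prop:monomialsInDet} is empty, hence $\det K[A,B]$ vanishes identically on $\cK(\cH)$. For the ``only if'' direction, suppose a self-avoiding cup system $U_0$ from $A$ to $B$ exists. Group all self-avoiding cup systems from $A$ to $B$ by the weight monomial $w(U)$, viewed as a monomial in the independent scalars $\{\lambda_{ij}: i\to j \text{ in }\cH\}\cup \{\omega_{ij}: i-j \text{ in }\cH \text{ or }i=j\}$. By Proposition \ref{prop:monomialsInDet}, all $U$ in one such group share the same sign; hence the coefficient of $w(U_0)$ in the expansion is a nonzero integer multiple of $\sgn(U_0)$, and $\det K[A,B]$ is a nonzero polynomial in the parameters.

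It remains to observe that being identically zero on $\cK(\cH)$ is equivalent to being identically zero as a polynomial in the parameters. The parametrization $(\Lambda,\Omega)\mapsto (I-\Lambda)\Omega(I-\Lambda)^T$ is a polynomial map from $\R^{\cH}\times \cS^+_\cH$ into $\cK(\cH)$, and $\cS^+_\cH$ is an open (hence Zariski-dense) subset of the linear space of symmetric matrices with the prescribed zero pattern. Therefore, a polynomial in the entries of $K$ that vanishes on $\cK(\cH)$ pulls back to a polynomial in $(\lambda_{ij},\omega_{ij})$ that vanishes on a nonempty Euclidean-open set, and so vanishes identically. Combined with the nonvanishing statement above, this gives the claimed equivalence. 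The only potentially delicate point—that terms from distinct cup systems with identical weights do not cancel—has already been dealt with inside Proposition \ref{prop:monomialsInDet}, so no further combinatorial obstacle remains.
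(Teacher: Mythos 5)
Your proposal is correct and follows exactly the route the paper intends: the corollary is stated as a direct consequence of Proposition~\ref{prop:monomialsInDet}, with the ``if'' direction coming from the empty sum and the ``only if'' direction from the no-cancellation statement plus the fact that the parameter domain $\R^{\cH}\times\cS^+_\cH$ is Euclidean-open in its ambient linear space, so a nonzero polynomial cannot vanish on it. You have merely made explicit the details the paper leaves implicit; there is no gap.
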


In the next section we begin our analysis of the group $G$, defined in (\ref{eq:mainproblem}), with a study of its connected component of the identity.

\section{The group $G$}\label{sec:GG}

\subsection{The connected component of the identity}

Denote by $E_{ij}$ the matrix in $\R^{m\times m}$ with all entries zero apart from the $(i,j)$-th element which is $1$. By $G^0$ denote the normal subgroup of $G$ which forms the connected component of the identity matrix. The subgroup $T^{m}$ of all diagonal and invertible matrices is contained in the group $G$ because scaling of vector $\bX$ does not affect conditional independencies. By \cite[Lemma 2.1]{DrKuZw2013}, to compute $G^{0}$, it suffices to check for which $(i,j)\in [m]\times [m]$ the one-parameter groups $(I+tE_{ij})$, $t\in \R$, lie in $G$; or equivalently $E_{ij}\in \liea$, where $\liea$ is the Lie algebra of $G$. 

Before we provide the main result of this section we recall \cite[Proposition 2.2]{DrKuZw2013}.
\begin{prop}[Proposition 2.2,  \cite{DrKuZw2013}]\label{prop:H0gens}
Let $\cH$ be an undirected graph. For $i, j\in [m]$ the matrix $E_{ij}$ lies in $\liea$ if and only if $N_{\cH}(i)\subseteq N_{\cH}(j)$. 
\end{prop}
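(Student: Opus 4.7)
The plan is to exploit the fact that in the undirected case $\cK(\cH)$ is a linear subspace of $\cS_m$, so that membership of $E_{ij}$ in the Lie algebra $\liea$ reduces to a single tangent-space calculation. Concretely, since the action on concentration matrices is $g\cdot K = g^{-T}Kg^{-1}$, differentiating along $g=I+tE_{ij}$ at $t=0$ gives the infinitesimal operator
\[
K \;\longmapsto\; -(E_{ji}K + KE_{ij}),
\]
and because $\cK(\cH)$ is a linear space (stable under translation), $E_{ij}\in \liea$ if and only if $E_{ji}K+KE_{ij}\in \cK(\cH)$ for every $K\in \cK(\cH)$.

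Next, I would compute the entries of this matrix explicitly: the $(a,b)$-entry of $E_{ji}K+KE_{ij}$ is
\[
\delta_{aj}K_{ib}\;+\;\delta_{bj}K_{ai}.
\]
The membership condition demands that this vanish (identically in $K\in \cK(\cH)$) whenever $a\neq b$ and $a\not\sim b$ in $\cH$. If neither $a$ nor $b$ equals $j$ the entry is already zero, so only the two symmetric cases $a=j$ and $b=j$ impose constraints.

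Take $a=j$, $b\neq j$: the entry becomes $K_{ib}$, and I need this to vanish on all of $\cK(\cH)$ whenever $j\not\sim b$. A generic $K\in \cK(\cH)$ has $K_{ib}=0$ identically if and only if $b\neq i$ and $b\not\sim i$, i.e.\ $b\notin N_\cH(i)$. Hence the $a=j$ case is exactly the implication
\[
b\notin N_\cH(j)\;\Longrightarrow\;b\notin N_\cH(i),
\]
which is literally $N_\cH(i)\subseteq N_\cH(j)$. The case $b=j$, $a\neq j$ yields the symmetric condition $K_{ai}=0$ whenever $a\not\sim j$, producing the same inclusion. Combining both directions gives the stated equivalence.

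I do not expect a real obstacle here: the argument is a direct derivative-plus-entry-bookkeeping calculation that works precisely because $\cK(\cH)$ is linear in the undirected setting. The only small subtlety is to note that $K_{ib}$ (for fixed $i\neq b$ with $i\sim b$) is a free parameter in $\cK(\cH)$, so vanishing of this entry for \emph{all} $K$ really does force $i\not\sim b$ and $i\neq b$; this is where the positive-definiteness of elements in $\cK(\cH)$ plays no role since the conditions are linear and the interior of $\cS_m^+$ is full-dimensional in its ambient linear span.
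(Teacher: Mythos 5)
Your argument is correct. Note that the paper itself does not prove Proposition \ref{prop:H0gens}: it is quoted verbatim from \cite{DrKuZw2013}, so there is no in-paper proof to compare against. Your route --- observing that in the undirected case $\Lambda=0$, so $\cK(\cH)$ is the intersection of the positive definite cone with a linear space $L$, and then reducing membership of $E_{ij}$ in $\liea$ to the tangent-space condition $E_{ji}K+KE_{ij}\in L$ for all $K\in L$ --- is the natural one, and the entry bookkeeping $(E_{ji}K+KE_{ij})_{ab}=\delta_{aj}K_{ib}+\delta_{bj}K_{ai}$ correctly yields the equivalence with $N_{\cH}(i)\subseteq N_{\cH}(j)$. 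Two small points worth making explicit: (1) for the ``if'' direction you need that the infinitesimal condition implies $I+tE_{ij}$ itself stabilizes $L$; either invoke the standard fact that the Lie algebra of the stabilizer of a subspace is the infinitesimal stabilizer, or simply note that the quadratic term $t^2E_{ji}KE_{ij}=t^2K_{ii}E_{jj}$ is diagonal and hence always in $L$; (2) the phrase ``the interior of $\cS_m^+$'' should be ``$\cS^+_\cH$ is open and nonempty in $L$,'' which is what guarantees that vanishing on the model is the same as vanishing on $L$. This linear argument is exactly what fails for general chain graphs, which is why the paper's Appendix replaces it with the determinantal obstructions of Lemmas \ref{lm:CombCrit} and \ref{lm:CombCrit2} and the cup-system calculus of Section \ref{sec:subdet}.
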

If $\cH$ is a NF-CG such that $\cH^{*}$ is an undirected graph then Proposition \ref{prop:H0gens} can be used to characterize $G^{0}$ for $\cH$ by passing to the essential graph.  However, it is not immediately clear how this result extends to all chain graphs without flags. We first note that one direction of the above result holds in general. 

\begin{lem} \label{lm:G0lower}
Let $\cH$ be an NF-CG. If $N_{\cH}(i)\subseteq N_{\cH}(j)$, then $E_{ij}\in\liea$.\end{lem}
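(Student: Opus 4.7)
The plan is to verify directly that the one-parameter subgroup $g_t = I + tE_{ij}$ acts on $\cK(\cH)$, which by \cite[Lemma 2.1]{DrKuZw2013} is equivalent to $E_{ij}\in\liea$. Given $K = (I-\Lambda)\Omega(I-\Lambda)^T\in\cK(\cH)$, I will exhibit $\Lambda'(t)\in\R^\cH$ and $\Omega'(t)\in\cS^+_\cH$ with
$$
g_t^{-T} K g_t^{-1} \;=\; (I-\Lambda'(t))\,\Omega'(t)\,(I-\Lambda'(t))^T.
$$
The construction uses an auxiliary invertible matrix $\tilde h(t)$: setting
$$
I-\Lambda'(t) := g_t^{-T}(I-\Lambda)\tilde h(t), \qquad \Omega'(t) := \tilde h(t)^{-1}\Omega\, \tilde h(t)^{-T},
$$
makes the displayed identity automatic, and positive-definiteness of $\Omega'(t)$ follows from invertibility of $\tilde h(t)$. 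The task reduces to choosing $\tilde h(t)$ so that both zero-patterns (the one defining $\R^\cH$ and the one defining $\cS^+_\cH$) are respected. Since $E_{ji}^2=0$ for $i\neq j$, one has $g_t^{-T}=I-tE_{ji}$, and $\tilde h(t)$ will be taken of the form $I+sE_{ji}$. The hypothesis $N_\cH(i)\subseteq N_\cH(j)$ forces $i\in N_\cH(j)$, so either $i=j$ (trivial, since $T^m\subseteq G$), $j\to i$, or $i-j$ in $\cH$.

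For $j\to i$, take $\tilde h(t)=I$, so $\Omega'(t)=\Omega$. Expanding $I-\Lambda'(t) = (I-tE_{ji})(I-\Lambda)$ shows that only row $j$ of $\Lambda$ is modified: $\Lambda'_{jl}=\Lambda_{jl}-t\Lambda_{il}$ for $l\neq i$, and $\Lambda'_{ji}=\Lambda_{ji}+t$, the latter being allowed since $j\to i$. For $l\notin\{i,j\}$ with $j\not\to l$ one needs $\Lambda_{il}=0$. Supposing instead $i\to l$, the hypothesis gives $l\in N_\cH(j)$; since $l\neq j$ and $l\in n_\cH(j)$ would make $j\to i\to l-j$ a semi-directed cycle in the chain graph $\cH$, one must have $l\in c_\cH(j)$, contradicting $j\not\to l$.

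For $i-j$, take $\tilde h(t)=I+tE_{ji}$. A direct expansion gives
$$
\Omega'_{kl}(t) \;=\; \Omega_{kl} - t\delta_{kj}\Omega_{il} - t\delta_{lj}\Omega_{ki} + t^2\delta_{kj}\delta_{lj}\Omega_{ii},
$$
so the only new nonzero off-diagonal entries appear in row or column $j$. For $l\notin\{i,j\}$ with $j$ and $l$ not linked by an undirected edge, one needs $l\notin n_\cH(i)$; otherwise $l$ lies in the common component of $i$ and $j$ (via $i-j$), ruling out $l\in c_\cH(j)$, and then $l\in N_\cH(j)\setminus\{j\}$ would force $l\in n_\cH(j)$, a contradiction. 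A parallel expansion of $I-\Lambda'(t) = (I-tE_{ji})(I-\Lambda)(I+tE_{ji})$ produces, for $k,l\notin\{i,j\}$, the entries $\Lambda'_{jl}=\Lambda_{jl}-t\Lambda_{il}$ and $\Lambda'_{ki}=\Lambda_{ki}+t\Lambda_{kj}$; the former is handled by the same semi-directed-cycle argument as in the $j\to i$ case, and the latter by Lemma~\ref{fac:nfs}, which gives $p_\cH(i)=p_\cH(j)$ since $i$ and $j$ share a component. The potentially problematic entries $\Lambda'_{ji}(t)$ and $\Lambda'_{jj}(t)$ cancel to zero by direct computation, thanks to the precise cancellation between the two factors $I-tE_{ji}$ and $I+tE_{ji}$.

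The main obstacle is the case $i-j$, where $\Lambda$ and $\Omega$ must be modified simultaneously and consistently by the specific choice $\tilde h(t)=I+tE_{ji}$. In this case the NF-CG hypothesis enters through Lemma~\ref{fac:nfs} to handle the column-$i$ entries of $\Lambda'(t)$, while the chain-graph property (no semi-directed cycles) and the component structure of $\cH$ handle the remaining zero-pattern checks for both $\Lambda'(t)$ and $\Omega'(t)$.
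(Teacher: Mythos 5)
Your proof is correct and follows essentially the same route as the paper's: split into the cases $j\to i$ and $i-j$ via $i\in N_\cH(i)\subseteq N_\cH(j)$, conjugate $(I-\Lambda)$ and $\Omega$ by the one-parameter group, and check that the resulting $\Lambda'(t)$ and $\Omega'(t)$ respect the zero patterns using the no-semi-directed-cycle property and Lemma~\ref{fac:nfs}. The only (cosmetic) difference is that you verify the invariance of $\cS^+_\cH$ under $\tilde h(t)^{-1}(\cdot)\tilde h(t)^{-T}$ by direct entrywise computation, where the paper instead cites Proposition~\ref{prop:H0gens} for the undirected part.
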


\begin{proof}
If $i=j$ then the statement is clear so suppose that $i\neq j$. We have $N_{\cH}(i)\subseteq N_{\cH}(j)$ only if either $j\to i$ or $i-j$ in $\cH$. Suppose first that $j\to i$. We have
$$
(I-tE_{ji})(I-\Lambda)\Omega(I-\Lambda)^{T}(I-tE_{ij})=(I-\tilde\Lambda)\Omega(I-\tilde\Lambda)^{T},
$$
where $\tilde\Lambda=-\Lambda-tE_{ji}+tE_{ji}\Lambda$; $\tilde\lambda_{uv}=\lambda_{uv}$ if $u\neq j$; $\tilde\lambda_{jv}=\lambda_{jv}-t\lambda_{iv}$ if $v\neq i$; and $\tilde\lambda_{ji}=\lambda_{ji}+t$. The fact that $\tilde\Lambda$ lies in $\R^{\cH}$ follows from $c_{\cH}(i)\subseteq c_{\cH}(j)$ and hence for every $v$ if $\lambda_{jv}= 0$ then $\lambda_{iv}=0$.

\medskip
If $i-j$ in $\cH$ then $i\cup n_{\cH}(i)\subseteq j\cup n_{\cH}(j)$ and $p_{\cH}(i)=p_{\cH}(j)$ by Lemma \ref{fac:nfs}. By Proposition \ref{prop:H0gens} applied to the undirected part of $\cH$ we can write $\Omega=(I+tE_{ji})\tilde\Omega(I+t E_{ij})$ for some $\tilde\Omega\in \cS_{\cH}^{+}$. Therefore 
$$
(I-tE_{ji})(I-\Lambda)\Omega(I-\Lambda)^{T}(I-tE_{ij})=(I-tE_{ji})(I-\Lambda)(I+tE_{ji})\tilde\Omega(I+tE_{ij})(I-\Lambda)^{T}(I-tE_{ij}),
$$
where we now show that there exists $\tilde\Lambda\in \R^{\cH}$ such that
$$
(I-tE_{ji})(I-\Lambda)(I+tE_{ji})=(I-\tilde\Lambda).
$$
Indeed, 
$$
\tilde\Lambda=\Lambda+t\Lambda E_{ji}-tE_{ji}\Lambda +t^{2}E_{ji}\Lambda E_{ji}
$$
where the last term must vanish because $\lambda_{ij}=0$. Hence $\tilde\Lambda$ is obtained from $\Lambda$ by adding a multiple of the $j$-th column to the $i$-th column and by adding a multiple of the $i$-th row to the $j$-th row. The fact that $\tilde\Lambda$ lies in $\R^{\cH}$ follows from the fact that $c_{\cH}(i)\subseteq c_{\cH}(j)$ and $p_{\cH}(i)= p_{\cH}(j)$, that is, the $i$-th column has the same support as the $j$-th column and  the support of the $i$-th row is contained in the support of the $j$-th row.
\end{proof}

The converse of the lemma does not hold for general NF-CG $\cH$. Consider for instance $\overset{1}{\bullet}\to\overset{2}{\bullet}\to\overset{3}{\bullet}$. By Example \ref{ex:DAG3}, the element $I+tE_{12}$ lies in $G^{0}$ but $\{1,2\}\not\subseteq \{2,3\}$. Nevertheless, the converse of the lemma above does hold when $\cH$ is essential; this is the main result of
this section. 

\begin{thm} \label{thm:G0}
Let $\cH$ be an essential NF-CG. Then  $E_{ij}\in \liea$ if and only if  $N_{\cH}(i)\subseteq N_{\cH}(j)$.
\end{thm}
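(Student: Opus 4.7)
The approach is to prove the converse to Lemma \ref{lm:G0lower} by contradiction. Assume $E_{ij}\in\liea$, so that $K'(t):=(I-tE_{ji})K(I-tE_{ij})$ lies in $\cK(\cH)$ for every $K\in\cK(\cH)$ and every $t\in\R$, and suppose there exists some $k\in N_\cH(i)\setminus N_\cH(j)$. I aim to exhibit sets $A,B\subseteq[m]$ of equal cardinality such that $\det K[A,B]$ vanishes identically on $\cK(\cH)$ while the $t$-coefficient of $\det K'(t)[A,B]$ does not, giving the required contradiction. The bridge between vanishing subdeterminants and graph combinatorics is furnished by Corollary \ref{cor:AB}.

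First I would compute how the one-parameter subgroup acts on a subdeterminant. From $K'=K-tE_{ji}K-tKE_{ij}+t^{2}E_{ji}KE_{ij}$, multilinearity of the determinant in the $j$-th row and column yields, in the generic case $j\in A\cap B$ and $i\notin A\cup B$,
\[
\det K'[A,B]\;=\;\det K[A,B]\;\pm\;t\det K[A',B]\;\pm\;t\det K[A,B']\;+\;t^{2}\det K[A',B'],
\]
where $A':=(A\setminus\{j\})\cup\{i\}$ and $B':=(B\setminus\{j\})\cup\{i\}$; analogous but simpler formulas hold when $j$ lies in only one of $A$, $B$, and when $i$ already belongs to the relevant set the corresponding row/column operation is between existing rows/columns of the submatrix and contributes nothing. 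Since $K'(t)\in\cK(\cH)$ for all $t$, whenever $\det K[A,B]\equiv 0$ on $\cK(\cH)$ every coefficient in the expansion of $\det K'[A,B]$ in powers of $t$ must also vanish on $\cK(\cH)$. Applying Corollary \ref{cor:AB} in both directions then yields a transfer principle for self-avoiding cup systems under the swap $i\leftrightarrow j$ in $A$ and $B$.

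The remaining step, where I expect the main difficulty, is to construct explicit $A,B$ for which this transfer demonstrably fails. The small example $\cH=\overset{1}{\bullet}\to\overset{2}{\bullet}\leftarrow\overset{3}{\bullet}$ already illustrates the mechanism: for $(i,j)=(1,2)$ and obstructing vertex $k=1\in N_\cH(1)\setminus N_\cH(2)$, the choice $A=\{1,2\}$, $B=\{2,3\}$ admits no self-avoiding cup system, whereas the swap $B\mapsto B'=\{1,3\}$ does, so the transfer fails. For the general case I would build $A$ and $B$ around $\{i,j,k\}$ together with auxiliary vertices dictated by the essential structure of $\cH$---for instance another parent of $k$ when $i\to k$ is essential by participating in an immorality, or members of the chain component of $i$ or $j$ when the obstruction is of undirected type. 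A case split is required on the edge type between $i$ and $k$ (whether $k=i$, $i-k$, or $i\to k$) and on the edge type between $i$ and $j$ (non-adjacent, $i-j$, $i\to j$, or $j\to i$). Essentiality of $\cH$ enters precisely to guarantee that the cup systems producing the contradiction exist: any alternative cup structure that would allow the $i\leftrightarrow j$ transfer would correspond to reorienting or undirecting an arrow of $\cH$, an operation forbidden in an essential graph by Proposition \ref{prop:essexists} and the legal-merging characterisation of Theorem \ref{th:roverato}. The technically most delicate sub-case is expected to be when $i$ and $j$ lie in different chain components, where the obstruction must be witnessed through meta-arrows rather than undirected edges.
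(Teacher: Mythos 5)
Your framework is exactly the paper's: reduce to showing that if $N_{\cH}(i)\not\subseteq N_{\cH}(j)$ then some subdeterminant $\det K[A,B]$ vanishes identically while the $t$-linear coefficient of $\det\bigl((I-tE_{ji})K(I-tE_{ij})\bigr)[A,B]$, namely $-\det K[A-j+i,B]-\det K[A,B-j+i]$, does not; and use Corollary \ref{cor:AB} to translate both conditions into (non)existence of self-avoiding cup systems. Your expansion of the determinant in $t$ and the discussion of which of $A,B$ contain $i$ and $j$ reproduce the paper's Lemmas \ref{lm:CombCrit} and \ref{lm:CombCrit2}. Up to that point the proposal is sound.

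However, the part you defer as ``the remaining step'' is the actual content of the theorem, and the plan you sketch for it would not go through as stated. First, building $A$ and $B$ as small sets ``around $\{i,j,k\}$'' does not in general make $\det K[A,B]$ vanish: the paper's constructions all take $A$ and $B$ to contain a set $D$ consisting of \emph{all children of a suitable vertex together with all their descendants}, and the vanishing is proved by a pigeonhole argument on where cups starting in $D\cup\{u\}$ must go after one step (Lemma \ref{lem:Det00}). Without this descendant-closure device there is no mechanism forcing the vanishing side of the criterion, and your three-vertex example is too degenerate to reveal the need for it. Second, in the cases where both $j\in A\cap B$ and $i\notin A\cup B$, one must show that the \emph{sum} $\det K[A-j+i,B]+\det K[A,B-j+i]$ is not identically zero, which requires exhibiting a monomial occurring in one determinant but not the other and invoking the sign-coherence statement of Proposition \ref{prop:monomialsInDet}; your proposal does not address this cancellation issue at all. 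Third, your appeal to essentiality is only heuristic: the paper extracts from essentiality the concrete dichotomy that an essential arrow $i\to j$ forces either a non-clique among the parents of $j$ inside the chain component of $i$, or a parent of one of $i,j$ outside that component that is not a parent of the other, and this dichotomy drives the case split (Cases III through VII, including the delicate configurations of Figure \ref{fig:animals}). Saying that a transfer of cup systems ``would correspond to reorienting an arrow, forbidden in an essential graph'' is not an argument, because the correspondence between cup-system transfers and graph operations is precisely what needs to be established case by case.
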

The proof is moved to the Appendix.

As we noted in the beginning of this section, the set of all $E_{ij}\in \liea$ gives already the complete information on the group $G^{0}$. Hence Theorem~\ref{thm:G0} gives the description of $G^{0}$ in (\ref{eq:G0}).

\begin{exmp}Consider a DAG $\cH=\overset{1}{\bullet}\rightarrow \overset{2}{\bullet}\leftarrow\overset{3}{\bullet}$ then $N_{\cH}(2)\subseteq N_{\cH}(1)\cap N_{\cH}(3)$. Hence both $E_{21}$ and $E_{23}$ lie in $\liea$ but no other off-diagonal elements of matrices in $G^0$ can be non-zero.
\end{exmp}

\subsection{The component group}\label{sec:fullG}

Note that $G^0$ given in Theorem~\ref{thm:G0} in general is not the whole group $G$. For example both for the model $\overset{1}{\bullet}\rightarrow\overset{2}{\bullet}\leftarrow \overset{3}{\bullet}$ and for any of the equivalent DAGs in Figure \ref{fig:3DAGs} the permutation matrix
$$
\left[\begin{array}{ccc}
0 & 0 & 1\\
0 & 1 & 0\\
1 & 0 & 0
\end{array}\right]
$$
lies in $G$ but not in $G^0$. The following result shows that permutation matrices form the basis for understanding the remaining part of the group $G$. For the proof see \cite[Proposition 2.5]{DrKuZw2013}.
\begin{prop}\label{lem:asperm}
Every element $g\in G$ can be written as $g=\sigma g_0$, where $g_0\in
G^0$ and $\sigma$ is a permutation matrix contained in $G$.
\end{prop}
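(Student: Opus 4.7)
The plan is to exploit the fact that $G^0$ contains the full diagonal torus $T^m$ of $\GL_m(\R)$, which is already a maximal torus of the ambient group, and then to extract the permutation part of $g \in G$ using the classical identification of the normalizer $N_{\GL_m(\R)}(T^m)$ with the monomial matrices $T^m \rtimes S_m$.

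First I would check that $T^m \subseteq G^0$, which is immediate: a simultaneous rescaling of coordinates preserves $\R^{\cH}$ and $\cS^+_{\cH}$ and hence preserves $\cK(\cH)$; equivalently, all diagonal matrix units $E_{ii}$ lie in the Lie algebra $\liea$, which is covered by Lemma \ref{lm:G0lower}. Since $T^m$ already has dimension $m$, no strictly larger torus exists in $\GL_m(\R)$, so $T^m$ is automatically a maximal torus of $G$ as well.

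Next, given any $g \in G$, the subgroup $g T^m g^{-1}$ sits inside $g G^0 g^{-1} = G^0$ because $G^0$ is normal in $G$ as the connected component of the identity. Thus $g T^m g^{-1}$ is a maximal torus of the connected group $G^0$, and by conjugacy of maximal tori in a connected algebraic group there exists $g_0 \in G^0$ with $g_0 T^m g_0^{-1} = g T^m g^{-1}$. Hence $g_0^{-1} g$ normalizes $T^m$ inside $\GL_m(\R)$, so it can be written as $g_0^{-1} g = \sigma t$ where $\sigma$ is a permutation matrix and $t \in T^m$.

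To finish, I would observe that $\sigma = g_0^{-1} g t^{-1}$ is a product of elements of $G$ (since $g_0, t \in G^0 \subseteq G$ and $g \in G$), so $\sigma \in G$. Using normality of $G^0$ once more, $\sigma^{-1} g_0 \sigma \in G^0$, and the identity
\[
g \;=\; g_0 \sigma t \;=\; \sigma \cdot (\sigma^{-1} g_0 \sigma) \cdot t \;=\; \sigma\, g_0'
\]
with $g_0' := (\sigma^{-1} g_0 \sigma)\, t \in G^0$ gives the desired decomposition. The only real obstacle is verifying that $T^m$ is a maximal torus of $G$; after that, the argument is the standard Bruhat-type fact that $N_{\GL_m(\R)}(T^m)/T^m \cong S_m$ combined with conjugacy of maximal tori, and no chain-graph specific input beyond $T^m \subseteq G^0$ enters. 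This also explains why the statement is formally identical to the undirected case in \cite[Proposition 2.5]{DrKuZw2013}.
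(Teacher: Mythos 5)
Your argument is correct and is essentially the proof the paper relies on: the paper simply cites \cite[Proposition 2.5]{DrKuZw2013}, and the proof there is exactly this torus argument --- $T^m\subseteq G^0$ is a maximal torus, conjugacy of maximal tori in the connected normal subgroup $G^0$ reduces $g$ modulo $G^0$ to an element normalizing $T^m$, and $N_{\GL_m(\R)}(T^m)$ is the group of monomial matrices $T^m\rtimes S_m$. The one point to phrase carefully is that over $\R$ one must invoke conjugacy of maximal \emph{split} tori (arbitrary maximal tori of a real algebraic group need not be conjugate), which applies here since $T^m$ and $gT^mg^{-1}$ are both split of full rank $m$ in $\GL_m(\R)$.
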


An automorphism of a hybrid graph is any bijection $\sigma:\, [m]\rightarrow [m]$ of its nodes such that for every $i,j\in [m]$ we have $\sigma(i)-\sigma(j)$ if and only if $i-j$ and $\sigma(i)\rightarrow \sigma(j)$ if and only if $i\rightarrow j$. 
\begin{lem}\label{lem:sigmainG}
Let $\cH$ be a NF-CG and $\cH^*$ its essential graph. Let $\sigma\in {\rm GL}_m(\R)$ be a permutation matrix. Then $\sigma\in G$ if and only if $\sigma$ is an automorphism of $\cH^*$.
\end{lem}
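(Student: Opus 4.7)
The plan is to identify conjugation by a permutation matrix with a relabeling of the vertices of $\cH$, and then invoke the uniqueness of the essential graph (Proposition~\ref{prop:essexists}). For a permutation $\sigma$ of $[m]$, write $\sigma(\cH)$ for the NF-CG on vertex set $[m]$ in which $\sigma(i)\to \sigma(j)$ (resp.\ $\sigma(i)-\sigma(j)$) iff $i\to j$ (resp.\ $i-j$) in $\cH$. The first step is to record the key identity $\sigma\cdot\cK(\cH)=\cK(\sigma(\cH))$, which follows from the direct computation
\[
(I-\sigma\Lambda\sigma^{T})(\sigma\Omega\sigma^{T})(I-\sigma\Lambda\sigma^{T})^{T}=\sigma(I-\Lambda)\Omega(I-\Lambda)^{T}\sigma^{T},
\]
together with the observation that conjugation by $\sigma$ sends $\R^{\cH}\times\cS^{+}_{\cH}$ bijectively onto $\R^{\sigma(\cH)}\times\cS^{+}_{\sigma(\cH)}$. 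Restricting to positive definite matrices yields $\sigma\cdot M(\cH)=M(\sigma(\cH))$.

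For the ``if'' direction, assuming $\sigma\in\Aut(\cH^{*})$ gives $\sigma(\cH^{*})=\cH^{*}$, so the key identity implies $\sigma\cdot M(\cH^{*})=M(\cH^{*})$. Since $M(\cH)=M(\cH^{*})$ by Proposition~\ref{prop:essexists}, we conclude $\sigma\in G$. For the ``only if'' direction, assume $\sigma\in G$. By the Remark following~(\ref{eq:mainproblem}), $G$ is closed under inversion, so $\sigma^{-1}\in G$ as well and hence $\sigma\cdot M(\cH)=M(\cH)$. Combining with the key identity, $M(\sigma(\cH))=M(\cH)$, so $\sigma(\cH)$ and $\cH$ are graph equivalent NF-CGs.

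The remaining step is to argue that forming the essential graph commutes with vertex relabeling, i.e.\ $(\sigma(\cH))^{*}=\sigma(\cH^{*})$. This is essentially tautological: the map $\cG\mapsto\sigma(\cG)$ is a bijection on NF-CGs with vertex set $[m]$, it clearly preserves skeletons, immoralities and flags, and it preserves the partial order $\subseteq$ used in Proposition~\ref{prop:essexists}. Therefore it sends the equivalence class $\la\cH\ra$ bijectively onto $\la\sigma(\cH)\ra$ and sends the unique maximal element $\cH^{*}$ of $\la\cH\ra$ to the unique maximal element of $\la\sigma(\cH)\ra$, which is $(\sigma(\cH))^{*}$. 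Combined with $\sigma(\cH)\in\la\cH\ra$, this gives $\sigma(\cH^{*})=(\sigma(\cH))^{*}=\cH^{*}$, so $\sigma$ is an automorphism of $\cH^{*}$. I expect the only conceptual point worth writing down carefully to be this commutativity of essential-graph construction with relabeling; everything else reduces to the identity $\sigma\cdot M(\cH)=M(\sigma(\cH))$ and the fact that $G$ is a group.
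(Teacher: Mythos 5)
Your proof is correct and takes essentially the same route as the paper's: identify the action of a permutation matrix with a relabeling of the graph, so that $\sigma\cdot M(\cH)=M(\sigma(\cH))$, and then invoke the uniqueness of the essential graph within the equivalence class. Your write-up is in fact more explicit than the paper's very terse argument (which compresses the key identity and the fact that essentialization commutes with relabeling into one sentence), but the underlying idea is identical.
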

\begin{proof}
The model $M(\cH)$ is uniquely defined by the set of conditional independence statements (see for example \cite{lauritzen:96}). Given a set of such statements that come from a chain graph $\cH$ the equivalence class  $\la\cH\ra$ is determined uniquely. The essential graph $\cH^*$ is the unique representative of $\la\cH\ra$ with the largest number of undirected edges. Since any permutation $\sigma$ applied to $\cH^*$ gives a NF-CG with the same number of undirected and directed edges (it simply relabels the nodes), $\sigma$ lies in the model if and only if $\sigma$ is an automorphism of $\cH^*$.
\end{proof}

By Lemma \ref{lem:sigmainG} we can conclude that $G$ is generated by $G^0$ and the automorphism group of $\cH^*$, which proves Theorem \ref{th:main}.

Define an equivalence relation on $[m]$ by $i\sim j$ whenever $N^{*}(i)=N^{*}(j)$. For example if $\cH=\overset{1}{\bullet}\rightarrow\overset{2}{\bullet}$ then $\cH^*=\overset{1}{\bullet}-\overset{2}{\bullet}$ and hence $1\sim 2$. The equivalence class of $i\in [m]$ is denoted by $\bar{i}$.

As explained in the introduction, the expression $G=\Aut(\cH^*) G^0$
is not minimal in the sense that $\Aut(\cH^*)$ and $G^0$ may intersect.
To get rid of that intersection, we define $\tcH^*$ to be the graph with
vertex set $[m]/\sim$ and $\bar{i}\rightarrow\bar{j}$ ($\bar{i}-\bar{j}$) in $\widetilde{\cH}^*$ if and only if $i\rightarrow j$ ($i-j$) in $\cH$. We first show that $\widetilde{\cH}^*$ is well defined. \begin{lem}\label{lem:equiv}Let $\cH$ be a NF-CG and $\cH^*$ its essential graph. Two elements $i,j\in [m]$ are equivalent if and only if $\{i\}\cup n^*(i)=\{j\}\cup  n^*(j)$, $p^*(i)= p^*(j)$ and $c^*(i)= c^*(j)$. In particular the graph $\widetilde{\cH}^*$ is well-defined.
\end{lem}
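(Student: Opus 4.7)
The biconditional splits as usual, and the direction that the three conditions imply $N^{*}(i)=N^{*}(j)$ is immediate from the definition $N^{*}(i)=\{i\}\cup n^{*}(i)\cup c^{*}(i)$. My plan for the nontrivial direction, starting from $i\sim j$ with $i\neq j$, is to proceed in two steps: first I will argue that $i$ and $j$ must be joined by an \emph{undirected} edge in $\cH^{*}$, and then I will extract the three conditions from the undirected-component structure.

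The decisive first step is where I expect the main obstacle to lie, and it hinges on the asymmetry of the definition of $N^{*}$ (children are included, parents are not). Since $i\in N^{*}(i)=N^{*}(j)$ and $i\neq j$, we must have $i\in n^{*}(j)\cup c^{*}(j)$. If $i\in c^{*}(j)$, i.e.\ $j\to i$ in $\cH^{*}$, then $j\in p^{*}(i)$, so $j$ cannot also lie in $N^{*}(i)=\{i\}\cup n^{*}(i)\cup c^{*}(i)$ (an edge $j\to i$ precludes a second edge $i-j$ or $i\to j$, so $j\notin n^{*}(i)\cup c^{*}(i)$); this contradicts $j\in N^{*}(j)=N^{*}(i)$. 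The same argument with the roles swapped rules out $i\to j$, so the only option left is $i-j$ in $\cH^{*}$.

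Once $i-j$ is established, $\cH^{*}$ is a NF-CG so Lemma \ref{fac:nfs} applies and yields $p^{*}(i)=p^{*}(j)$, which is the second condition. Let $T\in\sT(\cH^{*})$ be the undirected component containing both $i$ and $j$. Because the induced graph on a component of a chain graph is undirected, we have $\{i\}\cup n^{*}(i),\ \{j\}\cup n^{*}(j)\subseteq T$ while $c^{*}(i),c^{*}(j)\subseteq [m]\setminus T$. Intersecting the equality $N^{*}(i)=N^{*}(j)$ with $T$ therefore gives $\{i\}\cup n^{*}(i)=\{j\}\cup n^{*}(j)$, and intersecting with its complement gives $c^{*}(i)=c^{*}(j)$.

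For the in-particular statement, suppose $i\sim i'$, $j\sim j'$, and $\bar{i}\neq\bar{j}$. If $i\to j$ in $\cH^{*}$, then $j\in c^{*}(i)=c^{*}(i')$ gives $i'\to j$, and then $i'\in p^{*}(j)=p^{*}(j')$ gives $i'\to j'$. If $i-j$, then $j\in n^{*}(i)\subseteq\{i'\}\cup n^{*}(i')$; since $\bar{i}\neq\bar{j}$ we have $j\neq i'$, so $j\in n^{*}(i')$, and a symmetric application of the same argument produces $i'-j'$. Thus the adjacency relations in $\widetilde{\cH}^{*}$ depend only on the equivalence classes, and the graph is well defined.
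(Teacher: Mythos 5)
Your proof is correct and follows essentially the same route as the paper's: deduce $i-j$ in $\cH^*$ from $i\in N^*(j)$ and $j\in N^*(i)$ together with the prohibition on multiple edges, then invoke Lemma \ref{fac:nfs} for $p^*(i)=p^*(j)$. You additionally spell out details the paper leaves implicit (splitting $N^*$ along the undirected component to separate $\{i\}\cup n^*(i)$ from $c^*(i)$, and the explicit representative-independence check), which is a faithful elaboration rather than a different argument.
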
 
\begin{proof}If $N^{*}(i)=N^{*}(j)$ then $i$ and $j$ are necessarily linked. Since $i\in N^{*}(j)$ and $j\in N^{*}(j)$ we conclude that in fact $i-j$ in $\cH^{*}$. By Lemma \ref{fac:nfs}, since $i-j$, we also have $p^*(i)=p^*(j)$. This shows that $i\sim j$ if and only if $\{i\}\cup n^*(i)=\{j\}\cup n^*(j)$, $c^*(i)=c^*(j)$ and $p^*(i)=p^*(j)$, which shows that the definition of the arrows and edges in $\cH$ is independent of the representative $i$ and $j$. \end{proof}

Define $c:[m]/\sim \to \NN,\
\bar{i} \mapsto |\bar{i}|$ and view $c$ as a coloring of the vertices
of $\tcH^*$ by natural numbers. Let $\Aut(\tcH^*,c)$ denote the group of
automorphisms of $\tcH^*$ preserving the coloring. There is a lifting
$\ell: \Aut(\tcH^*,c) \to \Aut(\cH^*)$ defined as follows: the element $\tau
\in \Aut(\tcH^*,c)$ is mapped to the unique bijection $\ell(\tau): [m]
\to [m]$ that maps each equivalence class $\bar{i}$ to the equivalence
class $\tau(\bar{i})$ by sending the $k$-th smallest element of $\bar{i}$
(in the natural linear order on $[m]$) to the $k$-th smallest element
of $\tau(\bar{i})$, for $k=1,\ldots,|\bar{i}|$.

 \begin{exmp} Consider a DAG $\cH$ and its essential graph $\cH^*$ in Figure \ref{fig:4exambis}. Since $3$ and $4$ are equivalent, the induced essential  graph $\widetilde{\cH}^*$ is equal to $\overset{1}{\bullet}-\overset{2}{\bullet}-\!\overset{3,4}{\bullet}$. There are no non-trivial automorphisms of this graph preserving cardinality of equivalence classes and ${\rm Aut}(\widetilde{\cH}^*,c)=\{I\}$. In particular $\ell$ is a trivial mapping.
 \end{exmp}
  \begin{figure}[htp!]
\begin{tikzpicture}
\tikzstyle{vertex}=[circle,fill=black,minimum size=5pt,inner sep=0pt]
    \node[vertex] (1) at (0,0)  [label=above:$2$] {};
    \node[vertex] (4) at (-1.2,0) [label=left:$1$]{};
    \node[vertex] (2) at (1,.8) [label=above:$3$]{};
    \node[vertex] (3) at (1,-.8) [label=below:$4$]{};
    \draw[->,-latex,line width=.3mm] (1) to (2);
    \draw[->,-latex,line width=.3mm] (1) to (3);
    \draw[->,-latex,line width=.3mm] (1) to (4);
        \draw[->,-latex,line width=.3mm] (2) to (3);
  \end{tikzpicture}\qquad \qquad \begin{tikzpicture}
\tikzstyle{vertex}=[circle,fill=black,minimum size=5pt,inner sep=0pt]
    \node[vertex] (1) at (0,0)  [label=above:$2$] {};
    \node[vertex] (4) at (-1.2,0) [label=left:$1$]{};
    \node[vertex] (2) at (1,.8) [label=above:$3$]{};
    \node[vertex] (3) at (1,-.8) [label=below:$4$]{};
    \draw[line width=.3mm] (1) to (2);
    \draw[line width=.3mm] (1) to (3);
    \draw[line width=.3mm] (1) to (4);
        \draw[line width=.3mm] (2) to (3);
  \end{tikzpicture}

  \caption{On the left a DAG on four nodes. On the right its essential graph.}\label{fig:4exambis}
  \end{figure}
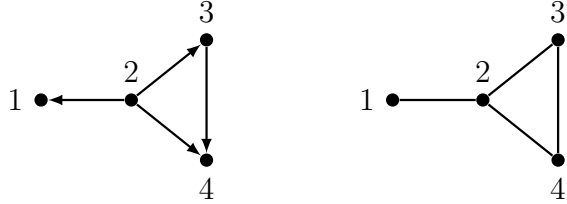

\begin{thm}\label{th:main2}
The group $G$ equals $\ell(\Aut(\tcH^*,c)) G^0$, and the intersection
$\ell(\Aut(\tcH^*,c)) \cap G^0$ is trivial, so $G$ is the semidirect product
$\ell(\Aut(\tcH^*,c)) \ltimes G^0$. 
\end{thm}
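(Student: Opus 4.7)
The plan is to verify three claims which, together with the normality of $G^0$ in $G$, immediately yield the semidirect product decomposition: (i) $\ell$ is a well-defined group homomorphism from $\Aut(\tcH^*,c)$ into $G$; (ii) $G = \ell(\Aut(\tcH^*,c)) \cdot G^0$; and (iii) $\ell(\Aut(\tcH^*,c)) \cap G^0 = \{I\}$. For (i), I would first argue that $\ell(\tau) \in \Aut(\cH^*)$ for every color-preserving automorphism $\tau$ of $\tcH^*$: by Lemma \ref{lem:equiv}, the edges and arrows of $\cH^*$ between nodes of distinct equivalence classes are encoded exactly by the edges and arrows of $\tcH^*$, while the nodes inside a common class form an undirected clique. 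Consequently $\ell(\tau)$ preserves every edge and arrow of $\cH^*$: those between distinct classes are shuffled according to $\tau$'s action on $\tcH^*$, and those within a class remain undirected edges inside the image class. Lemma \ref{lem:sigmainG} then places $\ell(\tau)$ in $G$. The homomorphism property follows from the order-preserving definition of $\ell$: both $\ell(\tau_1\tau_2)$ and $\ell(\tau_1)\ell(\tau_2)$ send the $k$-th smallest element of $\bar i$ to the $k$-th smallest element of $\tau_1\tau_2(\bar i)$.

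For (ii), I would invoke Theorem \ref{th:main} to write any $g \in G$ as $\sigma g_0$ with $\sigma \in \Aut(\cH^*)$ and $g_0 \in G^0$, so the task reduces to factoring each such $\sigma$ as $\ell(\tau_\sigma) \cdot h$ with $h \in G^0$. Any $\sigma \in \Aut(\cH^*)$ preserves the map $i \mapsto N^*(i)$, hence the equivalence relation $\sim$, and therefore descends to a permutation $\tau_\sigma$ of $[m]/\sim$ via $\tau_\sigma(\bar i) := \overline{\sigma(i)}$; since $\sigma$ restricts to a bijection $\bar i \to \overline{\sigma(i)}$, the permutation $\tau_\sigma$ preserves the coloring $c$ and, again by Lemma \ref{lem:equiv}, the arrows and edges of $\tcH^*$. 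The residue $h := \ell(\tau_\sigma)^{-1} \sigma$ is then a permutation of $[m]$ fixing every equivalence class setwise, so $N^*(h(i)) = N^*(i)$ for every $i$, and the support condition (\ref{eq:G0}) is trivially met, whence $h \in G^0$.

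For (iii), suppose $\ell(\tau) \in G^0$. Applying the support condition (\ref{eq:G0}) to both $\ell(\tau)$ and $\ell(\tau)^{-1}$ forces $N^*(\ell(\tau)(i)) = N^*(i)$, that is $\ell(\tau)(i) \sim i$, for every $i \in [m]$; hence $\tau(\bar i) = \overline{\ell(\tau)(i)} = \bar i$ for every $i$, so $\tau$ is the identity of $\Aut(\tcH^*,c)$ and $\ell(\tau) = I$. I expect step (ii) to be the only genuinely delicate point, since it requires both the careful construction of the descent $\tau_\sigma$ using Lemma \ref{lem:equiv} and the identification of the residue $\ell(\tau_\sigma)^{-1} \sigma$ with an element of $G^0$ via the explicit support description (\ref{eq:G0}); the remaining steps amount to bookkeeping with the equivalence relation $\sim$.
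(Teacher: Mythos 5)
Your proposal is correct and follows essentially the same route as the paper: normality of $G^0$, the factorization $g=\sigma g_0$ from Proposition \ref{lem:asperm} together with Lemma \ref{lem:sigmainG}, and an analysis of which permutations lie in $G^0$ via the support description (\ref{eq:G0}). In fact you spell out two steps the paper leaves implicit --- the descent $\sigma\mapsto\tau_\sigma$ with residue $\ell(\tau_\sigma)^{-1}\sigma\in G^0$, and the triviality of the intersection for arbitrary elements rather than just transpositions --- so your write-up is, if anything, more complete than the published one.
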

\begin{proof}
It is a standard result from the Lie group theory that the connected component of the identity $G^{0}$ is a normal subgroup of $G$. Hence, to show that $G=G^{0}\rtimes {\rm Aut}(\widetilde{\cH}^*,c)$ we need to show that $G=G^{0}\cdot {\rm Aut}(\widetilde{\cH}^*,c)$ and $G^{0}\cap {\rm Aut}(\widetilde{\cH}^*,c)=\{I\}$. The first part follows by Proposition \ref{lem:asperm} and  Lemma \ref{lem:sigmainG}. To show that $G^{0}\cap {\rm Aut}(\widetilde{\cH}^*,c)=\{I\}$ note that transpositions of $i$ and $j$ lie in $G^{0}$ precisely when $i$ and $j$ are equivalent and hence, when they do not lie in $\ell({\rm Aut}(\widetilde{\cH}^*,c))$.
%
%
\end{proof}

\begin{rem}To the coloured graph $(\widetilde\cH^*,c)$ we can associate a Gaussian graphical model $M(\cH,c)$ with \textit{multivariate nodes}, where node $\bar i$ is associated to a Gaussian vector of dimension ${c_{\bar i}}$. This model coincides with $M(\cH)$. This also shows, conversely, that our framework extends to general Gaussian graphical models of chain graphs with no flags with multivariate nodes.
\end{rem}

Computing the essential graph $\cH^*$ is not always a simple task. In Section \ref{sec:DAG} we show how to identify the group $G$ without finding $\cH^*$ in the case when $\cH$ is a DAG. In the next section we illustrate Theorem \ref{th:main2} with some basic examples.

\section{Efficient computations for DAG models}\label{sec:DAG}

In this section we present some efficient techniques for computing the group $G^{0}$ in the case when $\cH$ is a DAG. The following characterization of essential graphs of DAGs will be useful.  
\begin{thm}[Roverato, Studen\'{y} \cite{roverato2005}\cite{studeny2004}]\label{thm:essDAG}If $\cH$ is a DAG then each connected component of $\cH^*$ is decomposable. Moreover, $\cH^*$ coincides with the essential graph of $\cH$ as defined in \cite{andersson1997} (see also Remark \ref{rem:essential}). 
\end{thm}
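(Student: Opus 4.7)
The plan is to establish both parts by induction on the number of legal mergings needed to pass from $\cH$ to $\cH^*$, as furnished by Theorem \ref{th:roverato}.

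For the decomposability claim, at the base $\cH$ is a DAG whose chain components are singletons. For the inductive step, suppose $\cG_i$ is obtained from $\cG_{i-1}$ by legally merging a meta-arrow $T \Rightarrow T'$, and that each component of $\cG_{i-1}$ is decomposable. Set $S := p_{\cG_{i-1}}(T') \cap T$; by condition (a) of Definition \ref{def:merging}, $S$ is a clique in $T$. I would first observe that, by iterated application of the no-flag condition along undirected paths in the component $T'$, every $s \in S$ is a parent of every vertex of $T'$. Consequently the newly formed undirected component is $T \cup T'$ with edge set equal to the undirected edges of $T$, the undirected edges of $T'$, and the complete bipartite graph between $S$ and $T'$, with no edges from $T\setminus S$ to $T'$. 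Thus $T \cup T'$ decomposes as the clique sum along $S$ of $T$ and of the join $S + T'$. Since $T, T'$ are decomposable by the inductive hypothesis, since the join of a decomposable graph with a clique is decomposable (a short chordless-cycle analysis), and since clique sums of decomposable graphs are decomposable, $T \cup T'$ is decomposable.

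For the second claim, I would denote by $\cH^A$ the essential graph of \cite{andersson1997}, characterized by $i \to j$ in $\cH^A$ iff $i \to j$ in every DAG in $\la\cH\ra$. One inclusion is immediate: for any equivalent DAG $\cG$, Theorem \ref{th:roverato} gives $\cG \subseteq \cH^*$, and since $\cH^*$ has no doubled edges, every arrow $i\to j$ in $\cH^*$ must appear as $i \to j$ in $\cG$; hence every edge directed in $\cH^*$ is directed with the same orientation in $\cH^A$. For the reverse, if $i - j$ is undirected in $\cH^*$ inside some chain component $C$, then the decomposability of $C$ proved above lets me pick perfect elimination orderings of $C$ placing $i$ before $j$ and vice versa. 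Combining each such ordering with the directed arrows of $\cH^*$ should yield a DAG in $\la\cH\ra$, so the edge is undirected in $\cH^A$ as well.

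The main obstacle I anticipate is verifying this last step: that directing an undirected chain component of $\cH^*$ via a perfect elimination ordering, combined with the existing directed arrows, produces an acyclic orientation with no new immoralities. Here I would exploit decomposability (so the earlier neighbours of each vertex in the elimination form a clique, precluding immoralities generated within $C$) together with the NF condition and the legal-merging clauses, which control the boundary between $C$ and the rest of the graph. Once this is done, Frydenberg's Theorem \ref{th:frydenberg} certifies equivalence of the resulting DAG with $\cH$, completing the argument.
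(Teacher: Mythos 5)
The paper does not actually prove this statement---it is imported from \cite{roverato2005,studeny2004} (and \cite{andersson1997}) with only a citation---so there is no in-paper argument to compare against; your reconstruction must be judged on its own, and in outline it is correct and is essentially the argument found in that literature. The induction on legal mergings via Theorem \ref{th:roverato} is the right tool for chordality: condition (a) of Definition \ref{def:merging} together with Lemma \ref{fac:nfs} (which is precisely your ``iterated no-flag'' observation) does yield the clique-sum structure of the merged component, namely $T$ glued along the clique $S=p(T')\cap T$ to the join of $S$ with $T'$, and the two closure facts you invoke (join of a chordal graph with a clique, clique sums of chordal graphs) are standard. For the identification with the essential graph of \cite{andersson1997}, the forward inclusion via Proposition \ref{prop:essexists} is immediate as you say, and your diagnosis that the reverse inclusion carries all the weight is accurate; the step does go through exactly as you sketch: a perfect numbering makes the earlier neighbours of each vertex a clique, so no immorality is created inside a component; Lemma \ref{fac:nfs} makes every external parent of one endpoint of an undirected edge a parent of the other, so no immorality is created at the boundary; no original immorality is destroyed because those involve only arrows of $\cH^*$; and acyclicity holds because a directed cycle meeting several components would project to a semi-directed cycle in $\cH^*$. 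Theorem \ref{th:frydenberg} then certifies equivalence. The one ingredient you use without justification is that for any edge $i-j$ of a chordal graph there are perfect numberings placing $i$ before $j$ and vice versa (e.g.\ maximum cardinality search started at the desired endpoint); this is standard but worth recording if you intend the proof to be self-contained.
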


For any DAG $\cH$ on the set of nodes $[m]$, the \textit{standard imset} for $\cH$ is an integer-valued function $u_\cH:\, 2^{[m]}\rightarrow  \mathbb{Z}$, where $2^{[m]}$ is the set of all subsets of $[m]$, defined by
\begin{equation}\label{eq:uG}
u_\cH\quad:=\quad\delta_{[m]}-\delta_\emptyset+\sum_{i\in [m]}(\delta_{p_{\cH}(i)}-\delta_{p_{\cH}(i)\cup \{i\}}),
\end{equation}
where $\delta_A:\,2^{[m]}\rightarrow \{0,1\}$ satisfies $\delta_A(B)=1$ if $A=B$ and is zero otherwise. For example, it is easy to verify that all DAGs in Figure \ref{fig:3DAGs} give raise to the imset represented by Figure \ref{fig:imset3DAGs}.
\begin{lem}[Corollary 7.1, \cite{studeny2005pci}]\label{lem:uGequiv}
Let $\mathcal{G},\cH$ be two DAGs. Then $\cH\in \la\mathcal{G}\ra$ if and only if $u_\mathcal{G}=u_{\cH}$.
\end{lem}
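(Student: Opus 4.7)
My plan is to prove the two implications separately. The forward direction reduces to a bookkeeping verification under a single local graph move, while the reverse direction---the main obstacle---requires globally inverting the signed sum defining $u_\mathcal{G}$ to recover Markov-invariant features of the graph.

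For the forward direction ($\cH \in \la\mathcal{G}\ra \Rightarrow u_\mathcal{G} = u_{\cH}$), I would invoke Chickering's transformational characterization of Markov equivalence for DAGs: any two equivalent DAGs are connected by a sequence of \emph{covered edge reversals}, namely reversals of arrows $i \to j$ satisfying $p_\mathcal{G}(j) = p_\mathcal{G}(i) \cup \{i\}$. It then suffices to verify invariance of the imset under one such move. Writing $P := p_\mathcal{G}(i)$, the contributions of $i$ and $j$ to the sum in (\ref{eq:uG}) before reversal telescope to
\[
\bigl(\delta_P - \delta_{P\cup\{i\}}\bigr) + \bigl(\delta_{P\cup\{i\}} - \delta_{P\cup\{i,j\}}\bigr) \;=\; \delta_P - \delta_{P\cup\{i,j\}}.
\]
After reversal the parent sets become $p(i)=P\cup\{j\}$ and $p(j)=P$, so the combined contribution is
\[
\bigl(\delta_{P\cup\{j\}} - \delta_{P\cup\{i,j\}}\bigr) + \bigl(\delta_P - \delta_{P\cup\{j\}}\bigr) \;=\; \delta_P - \delta_{P\cup\{i,j\}},
\]
the same expression. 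Since all other parent sets are unchanged, $u_\mathcal{G}$ is preserved, and iterating along the sequence produces $u_\mathcal{G} = u_\cH$.

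For the reverse direction ($u_\mathcal{G} = u_{\cH} \Rightarrow \cH \in \la\mathcal{G}\ra$), by Theorem \ref{th:frydenberg} specialised to DAGs it suffices to recover from $u_\mathcal{G}$ the skeleton and the immoralities of $\mathcal{G}$. Following Studen\'y, I would pass to the \emph{characteristic imset} $c_\mathcal{G}$, a $0/1$-valued function on subsets of size at least two obtained by a M\"obius-type transform of $u_\mathcal{G}$; since $c_\mathcal{G}$ is a deterministic function of $u_\mathcal{G}$, the hypothesis forces $c_\mathcal{G} = c_\cH$. A careful inclusion-exclusion argument then identifies the support of $c_\mathcal{G}$ as exactly those $S$ with $S \subseteq \{i\} \cup p_\mathcal{G}(i)$ for some vertex $i$. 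The two-element subsets in this support recover the skeleton, and among the three-element subsets in the support, those $\{i,j,k\}$ for which $\{i,k\}$ is not itself in the support yield precisely the immoralities $i \to j \leftarrow k$. Hence $\mathcal{G}$ and $\cH$ share both the skeleton and the immoralities, and Theorem \ref{th:frydenberg} delivers $\cH \in \la\mathcal{G}\ra$.

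The main obstacle is the combinatorial identity describing the support of $c_\mathcal{G}$: it is precisely this identity that strips away the signed cancellations implicit in (\ref{eq:uG}) and exposes the family sets $\{i\}\cup p_\mathcal{G}(i)$. Everything else---invariance under covered edge reversals in one direction, and the appeal to Theorem \ref{th:frydenberg} in the other---is routine once this identity is in place.
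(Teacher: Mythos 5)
The paper does not prove this lemma at all --- it is quoted verbatim from Studen\'y's monograph (Corollary 7.1 of \cite{studeny2005pci}) --- so there is no internal argument to compare against; what can be judged is whether your blind proof is itself sound. Your two-pronged strategy is the standard one: the telescoping computation showing invariance of $u_\mathcal{G}$ under a covered edge reversal is correct, and combined with Chickering's transformational characterization of Markov equivalence it does give the forward direction; for the converse, passing to the characteristic imset and then invoking Theorem \ref{th:frydenberg} (Verma--Pearl for DAGs) is the right architecture.

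There is, however, a genuine problem in the converse, located exactly at the step you yourself flag as the main obstacle. The support of the characteristic imset is \emph{not} the collection of sets $S$ with $S\subseteq\{i\}\cup p_\mathcal{G}(i)$ for \emph{some} vertex $i$ of the graph; the correct identity requires the witnessing vertex to lie in $S$, namely $c_\mathcal{G}(S)=1$ if and only if there exists $i\in S$ with $S\setminus\{i\}\subseteq p_\mathcal{G}(i)$. As literally written, your condition would place $\{1,3\}$ in the support for the collider $1\to 2\leftarrow 3$ (take $i=2$, so that $\{1,3\}\subseteq\{2\}\cup p_\mathcal{G}(2)=\{1,2,3\}$), and your skeleton-recovery step would then wrongly declare $1$ and $3$ adjacent --- which destroys precisely the immorality detection the whole converse hinges on. Moreover this identity, correctly stated, is the nontrivial inclusion--exclusion computation (Studen\'y--Hemmecke--Lindner) and you defer it entirely. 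So the proof is repairable, but the single combinatorial fact the converse rests on is both unproved and misquoted in a way that matters.
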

\begin{figure}[htp!]
\begin{tikzpicture}[scale=0.8]
\tikzstyle{every node}=[draw,shape=circle split, inner sep=0pt,minimum size =1.2cm];
  \node (max) at (0,4) { {${}^{\{1,2,3\}}$}\nodepart{lower} \begin{minipage}[c]{0.5cm}\vspace{2pt}$+1$\end{minipage}};
  \node (a) at (-2,2) {{${}^{\{1,2\}}$}\nodepart{lower} \begin{minipage}[c]{0.6cm}\vspace{3pt}$-1$\end{minipage}};
  \node (b) at (0,2) {{${}^{\{1,3\}}$}\nodepart{lower} \begin{minipage}[c]{0.2cm}\vspace{3pt}$0$\end{minipage}};
  \node (c) at (2,2) {{${}^{\{2,3\}}$}\nodepart{lower} \begin{minipage}[c]{0.6cm}\vspace{3pt}$-1$\end{minipage}};
  \node (d) at (-2,0) {{${}^{\{1\}}$}\nodepart{lower} \begin{minipage}[c]{0.2cm}\vspace{3pt}$0$\end{minipage}};
  \node (e) at (0,0) {{${}^{\{2\}}$}\nodepart{lower} \begin{minipage}[c]{0.6cm}\vspace{3pt}$+1$\end{minipage}};
  \node (f) at (2,0) {{${}^{\{3\}}$}\nodepart{lower} \begin{minipage}[c]{0.2cm}\vspace{3pt}$0$\end{minipage}};
  \node (min) at (0,-2) {{${}^\emptyset$}\nodepart{lower} \begin{minipage}[c]{0.2cm}\vspace{3pt}$0$\end{minipage}};
  \draw (min) -- (d) -- (a) -- (max) -- (b) -- (f)
  (e) -- (min) -- (f) -- (c) -- (max)
  (d) -- (b);
  \draw[preaction={draw=white, -,line width=6pt}] (a) -- (e) -- (c);
\end{tikzpicture}
\caption{The imset $u_\cH$, where $\cH$ is any of the three equivalent DAGs in Figure \ref{fig:3DAGs}.}\label{fig:imset3DAGs}
\end{figure}
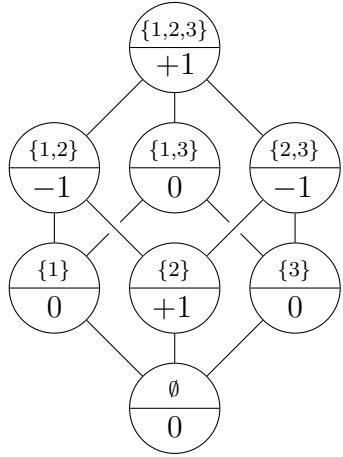

 The support of $u_\cH$ for a DAG $\cH$ has been described in \cite{studenyessential} directly in terms of the essential graph. To provide this result we introduce some useful notions related to chain graphs. 
 \begin{defn}\label{def:idle}
A set $B\subseteq[m]$ of nodes in a chain graph $\cH$ is \textit{idle} if $i\cdots j$ for all $i, j\in B$; and for every $i\in [m]\setminus B$ and every $j\in B$, $i\rightarrow j$ in $\cH$.\end{defn}
 By  \cite[Lemma 18]{studeny2009} every chain graph has a unique maximal idle set of nodes (which may be empty), which we denote by ${\rm idle}(\cH)$. The complement of the largest idle set is called the \textit{core} of $\cH$ and denoted ${\rm core}(\cH)$. Directly from the definition it follows that ${\rm idle}(\cH)$ is a union of connected components of $\cH$. Therefore, the core is also a union of connected components. The class of core-components, that is, components in $\cH$ contained in ${\rm core}(\cH)$ is denoted by $\sT_{\rm core}(\cH)$.
 

 \begin{lem}\label{lem:idle}If $\la\cH\ra$ is a NF-CF then ${\rm idle}(\cH^{*})$ forms a clique, that is, all its nodes are connected by an undirected edge.
\end{lem}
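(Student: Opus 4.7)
The plan is to argue by contradiction using Theorem~\ref{th:roverato} and Proposition~\ref{prop:essexists}: if $B := {\rm idle}(\cH^*)$ is not already an undirected clique, I will exhibit a legal merging in $\cH^*$, which would produce an NF-CG in $\la\cH\ra$ strictly containing $\cH^*$ and contradict its maximality. Recall that $B$ is a union of components of $\cH^*$.

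First I would observe that every component $T$ of $\cH^*$ contained in $B$ is already an undirected clique: any two nodes of $T$ lie in $B$ and are hence linked, and the link must be undirected because both endpoints lie in the same component. Consequently, any directed edge inside $B$ must run between two distinct components. I then analyse such cross-edges. Given $u \in T_a$ and $v \in T_b$ with $T_a \neq T_b$ both contained in $B$, the pair is linked (by the idle property) and directed (since $T_a, T_b$ are distinct components). If $u \to v$, Lemma~\ref{fac:nfs} applied to $T_b$ gives $u \in p_{\cH^*}(T_b)$, so $u \to v'$ for every $v' \in T_b$. To extend this to arbitrary $u' \in T_a$, traverse an undirected path from $u$ to $u'$ in $T_a$ and induct: at any step $w - w'$ with $w \to v$ already known, $w'$ must be linked to $v$ (else $v \leftarrow w - w'$ is an induced flag), and the alternative $v \to w'$ would close a semi-directed cycle with $w \to v$ and $w - w'$, forcing $w' \to v$. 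Thus the full meta-arrow $T_a \Rightarrow T_b$ points uniformly in one direction, and acyclicity of $\cH^*$ forces the induced tournament on the components of $B$ to be transitive; write $T_1 \Rightarrow T_2 \Rightarrow \cdots \Rightarrow T_s$ for the resulting total order.

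From this order and the idle property one reads off $p_{\cH^*}(T_b) = ([m] \setminus B) \cup T_1 \cup \cdots \cup T_{b-1}$ for each $b$. Hence for any consecutive pair $(T_a, T_{a+1})$ both conditions of Definition~\ref{def:merging} are satisfied: condition (a) becomes $p_{\cH^*}(T_{a+1}) \cap T_a = T_a$ being a clique of $T_a$, which holds by the first observation; and condition (b) becomes $p_{\cH^*}(T_{a+1}) \setminus T_a = p_{\cH^*}(T_a)$, which is immediate from the formula. So merging $T_a$ with $T_{a+1}$ is legal in $\cH^*$. If $s \geq 2$, Theorem~\ref{th:roverato} then produces an NF-CG $\cH^{**} \in \la\cH\ra$ with $\cH^* \subset \cH^{**}$, contradicting Proposition~\ref{prop:essexists}. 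Hence $s \leq 1$, and combined with the first step, $B$ is a (possibly empty) undirected clique.

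The main obstacle is the rigidity statement in the second paragraph, namely that between any two components of $B$ the cross-arrows all point the same way and form a single uniform meta-arrow. Once this is established via the flag/semi-directed-cycle dichotomy together with Lemma~\ref{fac:nfs}, the remainder is a short bookkeeping computation of parent sets and a direct check of the two conditions in Definition~\ref{def:merging}.
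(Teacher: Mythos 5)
Your proof is correct and follows essentially the same route as the paper's: each idle component is an undirected clique, the cross-arrows between distinct idle components form uniformly directed meta-arrows that are totally ordered by acyclicity, and a suitable pair of consecutive components admits a legal merging, contradicting the maximality of $\cH^*$. Two cosmetic slips worth fixing: the configuration $w'-w\to v$ is not a flag (a flag is $i\to j-k$ with the arrow pointing \emph{into} the node carrying the undirected edge), though the linkage of $w'$ and $v$ already follows from the idle property as you note at the outset; and the final contradiction needs only the lemma that a single legal merging stays inside $\la\cH\ra$ together with Proposition~\ref{prop:essexists}, not the full Theorem~\ref{th:roverato}.
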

\begin{proof}Because there is a directed arrow from any node outside ${\rm idle}(\cH^{*})$ to any node in ${\rm idle}(\cH^{*})$, every component of $\cH^{*}$ lies either inside or outside of ${\rm idle}(\cH^{*})$. Since all nodes in ${\rm idle}(\cH^{*})$ are linked, there is a meta-arrow between any two distinct components of ${\rm idle}(\cH^{*})$ and each component is a clique. Without loss of generality pick $T$ such that $T'$ is the only child-component of $T$. First note that $p^{*}(T')\cap T=T$ forms a clique. Second, the parent-components of $T'$ are $T\cup p^{*}(T)$. Indeed, if a component $S$, such that $S\Rightarrow T'$, lies outside of ${\rm idle}(\cH^{*})$ then $S\subseteq p^{*}(T)$ by definition. If $S\subseteq {\rm idle}(\cH^{*})$ then $S\subseteq p^{*}(T)$ because $S$ and $T$ are necessarily linked and $T$ has no other children than $T'$. Thus, by Definition \ref{def:merging}, $T$ and $T'$ can be legally merged, which contradicts the fact that $\cH^{*}$ is essential. 
\end{proof}

Note that ${\rm idle}(\cH^{*})$ is precisely the set of vertices $i$ such that $\downb i=[m]$, where $\downb i=\{j:\,N^{*}(i)\subseteq N^{*}(j)\}$. 

From now on $\cH$ will always denote a DAG. By Theorem \ref{thm:essDAG} each component $T\in \sT_{{\rm core}}(\cH^*)$ induces a decomposable graph $\cH^*_T$. We recall that a decomposable graph is an undirected graph with no induced cycles of size $\geq 4$. An alternative definition, that will be useful in this section, is that its maximal cliques can be ordered into  a sequence $C_{1},\ldots,C_{p}$ satisfying the \emph{running intersection property} (see \cite[Proposition 2.17]{lauritzen:96}), that is
\begin{equation}\label{eq:RunningInt}
 \forall i\geq 2\;\;\; \exists k<i\qquad S_{i}=C_{i}\cap \left(\bigcup_{j<i} C_{j}\right)\subseteq C_{k}.
\end{equation}
By \cite[Lemma 7.2]{studeny2005pci} the collection of sets $S_{i}$ for $2\leq i\leq m$ does not depend on the choice of ordering that satisfies (\ref{eq:RunningInt}). We call these sets \emph{separators} of the graph. The multiplicity $\nu(S)$ of a separator $S$ is then defined as the number of indices $i$ such that $S_{i}=S$. This number also does not depend on the choice of an ordering that satisfies (\ref{eq:RunningInt}).

By $\sC(T)$ we denote the collection of maximal cliques of $\cH^*_T$, by $\sS(T)$ the collection of its separators, and by $\nu_T(S)$ the multiplicity of $S\in \sS(T)$ in $\cH^*_T$. A set $P\subseteq [m]$ is called a \textit{parent set} in $\cH^*$ if it is non-empty and there exists a component $T\in \sT_{{\rm core}}(\cH^*)$ with $P=p_{\cH^*}(T)$. The multiplicity $\tau(P)$ of $P$ is the number of $T\in \sT_{{\rm core}}(\cH^*)$ with $P=p_{\cH^*}(T)$. The collection of all parent sets in $\cH^*$ is denoted by $\sP_{{\rm core}}(\cH^*)$. Finally, by $i(\cH^*)$ we denote the number of initial components of $\cH^*$, that is the components $T\in \sT_{{\rm core}}(\cH^*)$ such that $p_{\cH^*}(T)=\emptyset$.

We refer for the following result to  \cite[Lemma 5.1]{studenyessential}.
\begin{lem}\label{lem:newuG}Let $\cH^*$ be the essential graph of a DAG $\cH$. If ${\rm core}(\cH^*)=\emptyset$ then $u_{\cH}=0$. If ${\rm core}({\cH^*})\neq\emptyset$ then the standard imset for ${\cH}$ has the form
\begin{eqnarray*}
u_{\cH}&=&\delta_{{\rm core}({\cH^*})}-\sum_{T\in \sT_{{\rm core}}({\cH^*})}\sum_{C\in \sC(T)}\delta_{C\cup p_{{\cH^*}}(T)}+\sum_{T\in \sT_{{\rm core}}({\cH^*})}\sum_{S\in \sS(T)}\nu_T(S)\delta_{S\cup p_{{\cH^*}}(T)}+\\
&+&\sum_{P\in \mathcal{P}_{{\rm core}}({\cH^*})}\tau(P)\delta_{P}+(i({\cH^*})-1)\delta_\emptyset.
\end{eqnarray*}
\end{lem}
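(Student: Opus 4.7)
My plan is to reduce to a well-chosen DAG in $\la\cH\ra$ and then exploit the clique-separator structure of each core component of $\cH^*$. By Lemma \ref{lem:uGequiv}, $u_\cH=u_{\cH'}$ for every $\cH'\in\la\cH\ra$, so I will build $\cH'$ as follows: keep every directed arrow of $\cH^*$ and, within each component $T\in\sT(\cH^*)$, orient the edges of $\cH^*_T$ by a perfect elimination order, which exists because $\cH^*_T$ is decomposable by Theorem \ref{thm:essDAG}. The resulting hybrid graph has the same skeleton as $\cH^*$, is acyclic because intra-component orientations are acyclic and the meta-arrows of $\cH^*$ impose an acyclic partial order on components, and has no new immoralities: intra-$T$ immoralities are excluded because perfect elimination makes the intra-$T$ parents of each vertex a clique, while a mixed immorality $u\to v\ot w$ with $u\in p_{\cH^*}(T)$ and $w\in T$ is excluded because by Lemma \ref{fac:nfs} the set $p_{\cH^*}(T)$ is shared by all vertices of $T$, so already $u\to w$ in $\cH^*$. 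Hence $\cH'\in\la\cH\ra$ by Theorem \ref{th:frydenberg}.

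Next I would order the vertices of $\cH'$ so that ${\rm core}(\cH^*)$ is processed before ${\rm idle}(\cH^*)$, respecting the chosen perfect elimination orders inside core components, and apply formula (\ref{eq:uG}). The sum $\sum_{i\in[m]}(\delta_{p_{\cH'}(i)}-\delta_{p_{\cH'}(i)\cup\{i\}})$ will split into an idle part and a core part. Because ${\rm idle}(\cH^*)$ is a clique (Lemma \ref{lem:idle}) and every core vertex is a parent in $\cH'$ of every idle vertex, the idle part telescopes to $\delta_{{\rm core}(\cH^*)}-\delta_{[m]}$. For each core component $T$ with $P=p_{\cH^*}(T)$, listing the maximal cliques $C_1,\ldots,C_{p_T}$ of $\cH^*_T$ in running-intersection order with separators $S_k=C_k\cap\bigcup_{j<k}C_j$ and $S_1=\emptyset$, the vertices of $C_k\setminus\bigcup_{j<k}C_j$ telescope to $\delta_{P\cup S_k}-\delta_{P\cup C_k}$, so the total contribution of $T$ becomes $\delta_P-\sum_{C\in\sC(T)}\delta_{P\cup C}+\sum_{S\in\sS(T)}\nu_T(S)\,\delta_{P\cup S}$.

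Summing over core components $T$, the totals $\sum_T\delta_{p_{\cH^*}(T)}$ regroup into $i(\cH^*)\,\delta_\emptyset+\sum_{P\in\sP_{\rm core}(\cH^*)}\tau(P)\,\delta_P$. Combining this with the prefix $\delta_{[m]}-\delta_\emptyset$ of (\ref{eq:uG}) and the idle contribution, the $\delta_{[m]}$ terms cancel and the $\delta_\emptyset$ terms collapse to $(i(\cH^*)-1)\,\delta_\emptyset$, producing the stated formula. The degenerate case ${\rm core}(\cH^*)=\emptyset$ reduces by Lemma \ref{lem:idle} to $\cH^*$ being a single clique, so $\cH'$ is a linear order on $[m]$ and the whole imset telescopes to $0$. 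The hard part here is the equivalence check $\cH'\in\la\cH\ra$: ruling out mixed immoralities formed by essential inter-component arrows and the reoriented intra-component edges is where the NF-CG hypothesis and the shared-parent property of Lemma \ref{fac:nfs} are crucial; the remainder is telescoping together with careful bookkeeping of clique, separator, and parent-set multiplicities.
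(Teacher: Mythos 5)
Your argument is correct, but note that the paper does not actually prove this lemma: it simply points to \cite[Lemma 5.1]{studenyessential}, so there is no in-paper proof to compare against, and what you have written is essentially a self-contained reconstruction of Studen\'y's argument. Your route --- choose a representative DAG $\cH'\in\la\cH\ra$ adapted to $\cH^*$ (essential arrows kept, the undirected part of each component oriented by a perfect numbering), verify $\cH'\in\la\cH\ra$ via Theorem \ref{th:frydenberg}, then let the defining sum (\ref{eq:uG}) telescope along the clique--separator decomposition of each core component and along the idle clique --- is sound in all its steps: acyclicity of $\cH'$ follows from acyclicity of the component order plus acyclicity inside components; the exclusion of new immoralities uses exactly the two facts you name (perfect elimination inside $T$, and $p_{\cH^*}(v)=p_{\cH^*}(T)$ for all $v\in T$ from Lemma \ref{fac:nfs}); and regrouping $\sum_T\delta_{p_{\cH^*}(T)}$ into $i(\cH^*)\delta_\emptyset+\sum_{P}\tau(P)\delta_P$ together with the cancellation of $\delta_{[m]}$ against the idle contribution yields the stated formula, including the degenerate case. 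The one place where you are terser than you should be is the intra-component telescoping: you need that for $v\in R_k:=C_k\setminus\bigcup_{j<k}C_j$ \emph{every} earlier neighbour of $v$ lies in $C_k$, so that $p_{\cH'}(v)$ is exactly $p_{\cH^*}(T)\cup S_k\cup\{\text{earlier vertices of }R_k\}$; this is the standard ``perfect numbering'' property of a decomposable graph ordered compatibly with a running-intersection ordering of its cliques, but it is not automatic (a priori $v$ could have an earlier neighbour met only inside a later clique $C_j$, $j>k$) and deserves a short inductive justification or a citation. With that point made explicit the proof is complete; the independence of the separator multiplicities $\nu_T(S)$ from the choice of RIP ordering is already covered by the paper's citation of \cite[Lemma 7.2]{studeny2005pci}.
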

By Lemma 5.2 in \cite{studenyessential}, unless $\cH^{*}$ is a complete graph, the terms in the above formula never cancel each other. In particular the support of $u_\cH$ is the collection of all sets of the form:
\begin{itemize}
\item[(i)] the core of $\cH^{*}$
\item[(ii)] $C\cup p^*(T)$ for $T\in \sT_{{\rm core}}(\cH^*)$ and $C\in \sC(T)$
\item[(iii)] $S\cup p^*(T)$ for $T\in \sT_{{\rm core}}(\cH^*)$ and $S\in \sS(T)$
\item[(iv)] $P$ for $P\in \mathcal{P}_{{\rm core}}({\cH^*})$
\end{itemize}
The empty set may or may now appear in the support set of $u_{\cH}$ but this does not play any role in the following arguments.
\begin{prop}\label{th:uforDAGS}Let $\cH$ be a DAG. Then $N^{*}(i)\subseteq N^{*}(j)$ if and only if $i\in A$ implies $j\in A$ for every $A$ in the support of $u_\cH$.
\end{prop}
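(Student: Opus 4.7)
The strategy is a case analysis over the four kinds of support sets (i)--(iv) listed after Lemma \ref{lem:newuG}. The key preliminary observation: if $i\neq j$ and $N^*(i)\subseteq N^*(j)$, then $i\in N^*(i)\subseteq N^*(j)$ forces either $j\to i$ or $i-j$ in $\cH^*$, and nearly every subcase splits along these two alternatives.

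For the forward direction, the cases in which $i\in p^*(T)$ (covering (iv) and the $p^*(T)$-parts of (ii), (iii)) are handled uniformly. By Lemma \ref{fac:nfs}, $i\to t$ for every $t\in T$, and $t\in c^*(i)\subseteq N^*(j)$ forces $t=j$, $t-j$, or $j\to t$. In the subcase $j\to i$, the first option gives a double edge $i\to j\to i$ and the second yields a semi-directed cycle $j\to i\to t-\cdots-j$ through an undirected path in the component $T$; in the subcase $i-j$, both $t=j$ and $t-j$ would place $j$ in the component $T$, whereas $i-j$ forces $j$ into $i$'s component, which is distinct from $T$ since $i\in p^*(T)$. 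Only $j\to t$, i.e., $j\in p^*(T)$, survives. Case (i) uses Lemma \ref{lem:idle}: vertices in ${\rm idle}(\cH^*)$ have no outgoing arrows, so $j\to i$ with $i\in{\rm core}$ forces $j\in{\rm core}$, while $i-j$ keeps $j$ in $i$'s component, which lies entirely in ${\rm core}$. For the max-clique subcase of (ii), with $i\in C$ and $i-j$, the inclusion $N^*(i)\cap T\subseteq N^*(j)\cap T$ reduces to $\{i\}\cup n^*(i)\subseteq\{j\}\cup n^*(j)$; every $k\in C$ therefore satisfies $k-j$ or $k=j$, so $C\cup\{j\}$ is a clique and $j\in C$ by maximality.

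The separator subcase of (iii), with $i\in S$ and $i-j$, is the most delicate. The plan is to invoke the classical chordal-graph fact that every $S\in\sS(T)$ is a \emph{minimal} vertex-separator of the decomposable graph $\cH^*_T$ (decomposability being Theorem \ref{thm:essDAG}). Suppose $j\notin S$; then $j$ lies in some connected component $X$ of $\cH^*_T\setminus S$. Minimality of $S$ guarantees that $i$ has a neighbor $k$ in some other component $Y\neq X$. Then $k\in n^*(i)\subseteq N^*(j)$ and, since $k,j\in T$ forbids $k\in c^*(j)$ and $k\neq j$, we obtain $k-j$; but this is an edge between $X$ and $Y$ in $\cH^*_T\setminus S$, contradicting the separation.

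For the reverse direction, if $i\in{\rm idle}(\cH^*)$ then $i$ belongs to no non-empty support set (idle vertices have no outgoing arrows and lie outside ${\rm core}$, every clique $C$, every separator $S$, and every $P\in\sP_{{\rm core}}$), so the hypothesis is vacuous; directly $N^*(i)={\rm idle}\subseteq N^*(j)$ holds for every $j$, since $c^*(j)\supseteq{\rm idle}$ whenever $j\in{\rm core}$. Otherwise $i\in T\in\sT_{{\rm core}}$. Applying the hypothesis to type (iv) sets yields $c^*(i)\subseteq c^*(j)$, since for each $T'$ with $i\in p^*(T')$ we obtain $j\in p^*(T')$, hence $T'\subseteq c^*(j)$. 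Applying it to $A=C\cup p^*(T)$ for every maximal clique $C\ni i$ in $\cH^*_T$ forces either (A) $j\in p^*(T)$, or (B) $j\notin p^*(T)$ and $j\in C$ for every such $C$. In (A), $j\to t$ for all $t\in T$ gives $\{i\}\cup n^*(i)\subseteq T\subseteq c^*(j)$. In (B), $j\in T$ with $i-j$, and every edge $i-k$ in $\cH^*_T$ extends to a maximal clique that must contain $j$, so $k-j$ or $k=j$; hence $n^*(i)\subseteq\{j\}\cup n^*(j)$. Combined with $c^*(i)\subseteq c^*(j)$, both cases give $N^*(i)\subseteq N^*(j)$. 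The main obstacle is the separator case in (iii), which requires the chordal structure of $\cH^*_T$ together with the minimality of RIP-separators; the remainder is a careful but routine case analysis ruling out the forbidden chain-graph configurations (double edges and semi-directed cycles).
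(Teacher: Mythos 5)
Your proof is correct and follows the same overall route as the paper: both reduce the statement to Lemma~\ref{lem:newuG}'s description of the support of $u_\cH$ in terms of $\cH^*$ and then run a case analysis over the four types of support sets, with the forward direction driven by the observation that $i\in N^{*}(j)$ forces $j\to i$ or $i-j$ in $\cH^{*}$, and the reverse direction reconstructing $N^{*}(i)\subseteq N^{*}(j)$ from maximal cliques (for the undirected part) and parent sets (for $c^{*}$). You diverge in two places, and in both you are more careful than the published argument. First, for the separator subcase of type (iii) the paper merely asserts that ``the arguments \ldots are the same'' as for maximal cliques; since a separator is not a maximal clique, the maximality step does not literally transfer, and you instead invoke minimality of the RIP separators of the chordal graph $\cH^{*}_{T}$ (every vertex of a minimal separator has neighbours in two distinct components of $\cH^{*}_{T}\setminus S$). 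This is valid; a more economical alternative, closer in spirit to what the paper presumably intends, is to note that each $S\in\sS(T)$ equals $C_{p}\cap C_{q}$ for two maximal cliques both containing $i$, so the clique argument applied to $C_{p}$ and $C_{q}$ separately already yields $j\in S$. Second, in the reverse direction you explicitly treat the case $i\in{\rm idle}(\cH^{*})$, where the hypothesis is vacuous and one must verify directly that $N^{*}(i)={\rm idle}(\cH^{*})\subseteq N^{*}(j)$ for every $j$; the paper's proof silently assumes $i$ lies in a core component. Both points are small genuine gaps in the paper's write-up that your argument fills.
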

\begin{proof}Lemma \ref{lem:newuG} gives the support of $u_\cH$ in terms of $\cH^*$, see also items (i)-(iv) above. For the forward direction first note that if $i\in C$ then $j\in C\cup p^{*}(T)$, which follows immediately from $i\in N^{*}(j)$. This implies that if $i$ lies in the core  then $j$ also lies in the core. Suppose now that $i\in C\cup p^{*}(T)$ for some $T\in \sT_{{\rm core}}(\cH^*)$ and $C\in \sC(T)$. If $i\in C$ then we have just shown that  $j\in C\cup p^{*}(T)$. If $i\in p^{*}(T)$ then $j\in p^{*}(T)$ because $c^{*}(i)\subseteq c^{*}(j)$. The arguments for the subsets of type (iii) and (iv) above  are the same.

For the opposite direction first note that if $i\in A$ implies $j\in A$ for all $A$ in the support of $u_\cH$ then taking $A=C\cup p^*(T)$ where $T$ is the connected component of $i$  and $C\in \sC(T)$ we find that either $i-j$ or $j\to i$ and hence $i\in N^{*}(j)$. Let $k\in n^*(i)\cup c^*(i)$. Suppose first that $i-j$. If $k\in  n^*(i)$ then $k\in n^*(j)$. To see that take any $C\cup p^*(T)$ such that $i,k\in C$, which implies that $j\in C$. Similarly, if $k\in c^*(i)$ then $k\in c^*(j)$, which follows by considering $P$ a parent set of the component containing $k$. Consequently $N^{*}(i)\subseteq N^{*}(j)$. The case $j\to i$ is similar.  
\end{proof}

Proposition \ref{th:uforDAGS} gives an efficient procedure of checking when $N^{*}(i)\subseteq N^{*}(j)$ without constructing the essential graph $\cH^*$, which gives the description of $G^0$. We present this procedure in the pseudocode below. 

\begin{algorithm2e}[H]
\SetKw{KwInn}{in}
 \SetAlgoLined
 \KwData{a DAG $\cH=([m],E)$}
 \KwResult{the set of pairs $(i,j)$ such that $N^{*}(i)\subseteq N^{*}(j)$}
 initialization\;
  \For{$i\to j$ \KwInn $\cH$}{
  add $i$ to $p_\cH(j)$\;
  }
 $u_\cH(\emptyset):=-1$, $u_\cH([m]):=1$, $\bS=\emptyset$\;
 \For{$i= 1$ \KwTo $m$}{
  $++u_\cH(p_\cH(i))$,$--u_\cH(p_\cH(i)\cup i)$\;
  add $\{p_\cH(i)\}$ and $\{p_\cH(i)\cup i\}$ to $\bS$\;
 } 
\ForAll{elements $S$ of $\bS$}{
if $u_\cH(S)=0$ then remove $S$ from $\bS$\;
}
 \For{$i= 1$ \KwTo $m$}{
  $\cE_i:=\{S\in \bS:\, i\in S\}$\;
}
 \For{$i\cdots j\in E$}{
  $N^{*}(i)\subseteq N^{*}(j)$ if and only if $\cE_i\subseteq \cE_j$\;
}
  \caption{The computation of $G^{0}$ for a DAG $\cH$}
\end{algorithm2e}
In addition note that the size of the support set of $u_{\cH^{*}}$ is $\leq 2m$. The fact that  it is $\leq 2m+2$ is obvious from (\ref{eq:uG}). But also  any initial vertex $i$ in $\cH$ will have $p_\cH(i)=\emptyset$ and hence $-\delta_\emptyset $ and $\delta_{p_\cH(i)}$ will cancel each other. It follows that the number of operation to build construct $G^{0}$ is quadratic in $m$. In fact all loops are linear in $m+|E|$ apart from the penultimate one. 

The imset $u_\cH$ gives in fact the complete description of the group $G$. 
\begin{lem}\label{lem:imsetAut}Let $\sigma$ be a permutation. Then $\sigma\in G$ if and only if $u_\cH=\sigma(u_\cH)$, where 
$$
\sigma(u_\cH)(S)=u_\cH(\sigma^{-1}(S)).
$$
Consequently, by Theorem \ref{th:main} we obtain the complete structure of $G$. 
\end{lem}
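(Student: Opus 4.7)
The plan is to reduce the claim to Markov equivalence of the DAG $\cH$ with its relabelling by $\sigma$. Write $\sigma\cdot\cH$ for the DAG on $[m]$ in which $i\to j$ if and only if $\sigma^{-1}(i)\to\sigma^{-1}(j)$ in $\cH$. The key observation to check first is that
\[
u_{\sigma\cdot\cH}(S)\;=\;u_{\cH}(\sigma^{-1}(S))\;=\;\sigma(u_{\cH})(S)
\qquad\text{for every }S\subseteq[m],
\]
which is a direct verification from the defining formula (\ref{eq:uG}): since $p_{\sigma\cdot\cH}(\sigma(i))=\sigma(p_{\cH}(i))$, reindexing the sum $i\mapsto \sigma(i)$ turns the expression for $u_{\sigma\cdot\cH}$ into $\sigma(u_{\cH})$.

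Next, I would invoke Lemma \ref{lem:uGequiv}: the identity $u_{\cH}=u_{\sigma\cdot\cH}$ is equivalent to $\sigma\cdot\cH\in\la\cH\ra$, i.e.\ to $\cH$ and $\sigma\cdot\cH$ defining the same chain graph model. Equivalence of $\cH$ and $\sigma\cdot\cH$ is in turn equivalent to equality of essential graphs, and since passing to the essential graph is intrinsic to the equivalence class and commutes with vertex relabelling, $(\sigma\cdot\cH)^{*}=\sigma\cdot\cH^{*}$ by uniqueness in Proposition \ref{prop:essexists}. Thus $u_{\cH}=\sigma(u_{\cH})$ if and only if $\sigma\cdot\cH^{*}=\cH^{*}$, which is the defining condition for $\sigma\in\Aut(\cH^{*})$.

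The proof is then closed by Lemma \ref{lem:sigmainG}, which says that a permutation $\sigma$ lies in $G$ if and only if $\sigma\in\Aut(\cH^{*})$. Chaining the equivalences gives $\sigma\in G\iff u_{\cH}=\sigma(u_{\cH})$, as required. For the concluding sentence, combining this with Theorem \ref{th:main} and the explicit description (\ref{eq:G0}) of $G^{0}$ (which by Proposition \ref{th:uforDAGS} can itself be read off from the support of $u_{\cH}$) yields a complete description of $G$ purely in terms of the standard imset, with no need to build $\cH^{*}$.

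There is no real obstacle here: the entire argument is a bookkeeping chain of three earlier results (Lemma \ref{lem:uGequiv}, Proposition \ref{prop:essexists}, and Lemma \ref{lem:sigmainG}) together with the compatibility of the imset construction with vertex relabellings. The only point to be careful about is that the action $\sigma(u_\cH)(S):=u_\cH(\sigma^{-1}(S))$ chosen in the statement is indeed the natural left action of the symmetric group on functions on $2^{[m]}$, so that it matches the relabelling action on DAGs on the nose; once this convention is pinned down the rest is immediate.
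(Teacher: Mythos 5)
Your proposal is correct and follows essentially the same route as the paper, which disposes of the lemma in one line by citing the bijection between standard imsets and DAG models (Lemma \ref{lem:uGequiv}); your version simply spells out the chain $u_\cH=\sigma(u_\cH)\iff \sigma\cdot\cH\in\la\cH\ra\iff\sigma\in\Aut(\cH^*)\iff\sigma\in G$ via the equivariance $u_{\sigma\cdot\cH}=\sigma(u_\cH)$ and Lemma \ref{lem:sigmainG}. The extra detail is welcome but not a different argument.
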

\begin{proof}This follows from the fact that $u_\cH$ is in a one-to-one correspondence with a DAG model of $\cH$.
\end{proof}
Lemma \ref{lem:imsetAut} does not provide an efficient algorithm to find the automorphism group of $\cH^{*}$, which in general is a hard problem.

\section{Special graphs and small examples}\label{sec:examples}

Some DAG models are equivalent to undirected graphical models, in which case we refer to \cite[Section 7]{DrKuZw2013} for examples.  To obtain a new set of examples we first consider two simple DAGs: the \textit{sprinkle graph} in Figure \ref{fig:sprinkle} and the \textit{Verma graph} in Figure \ref{fig:verma}.  

  \begin{figure}[htp!]
\begin{tikzpicture}
\tikzstyle{vertex}=[circle,fill=black,minimum size=5pt,inner sep=0pt]
    \node[vertex] (1) at (0,0)  [label=left:$1$] {};
    \node[vertex] (4) at (2,0) [label=above:$4$]{};
    \node[vertex] (2) at (1,.8) [label=above:$2$]{};
    \node[vertex] (3) at (1,-.8) [label=below:$3$]{};
    \node[vertex] (5) at (3,0) [label=above:$5$]{};
    \draw[->,-latex,line width=.3mm] (1) to (2);
    \draw[->,-latex,line width=.3mm] (1) to (3);
    \draw[->,-latex,line width=.3mm] (2) to (4);
    \draw[->,-latex,line width=.3mm] (3) to (4);
    \draw[->,-latex,line width=.3mm] (4) to (5);
  \end{tikzpicture}\qquad \qquad \begin{tikzpicture}
\tikzstyle{vertex}=[circle,fill=black,minimum size=5pt,inner sep=0pt]
    \node[vertex] (1) at (0,0)  [label=left:$1$] {};
    \node[vertex] (4) at (2,0) [label=above:$4$]{};
    \node[vertex] (2) at (1,.8) [label=above:$2$]{};
    \node[vertex] (3) at (1,-.8) [label=below:$3$]{};
    \node[vertex] (5) at (3,0) [label=above:$5$]{};
    \draw[line width=.3mm] (1) to (2);
    \draw[line width=.3mm] (1) to (3);
    \draw[->,-latex,line width=.3mm] (2) to (4);
    \draw[->,-latex,line width=.3mm] (3) to (4);
    \draw[->,-latex,line width=.3mm] (4) to (5);
  \end{tikzpicture}
  \caption{The sprinkle graph on the left and its essential graph on the right.}\label{fig:sprinkle}
  \end{figure}
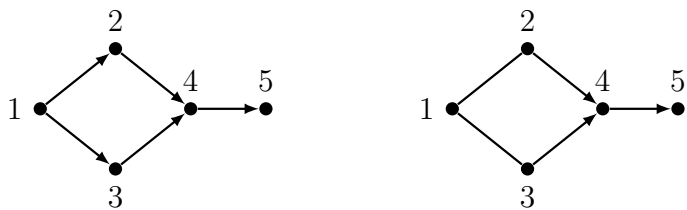

  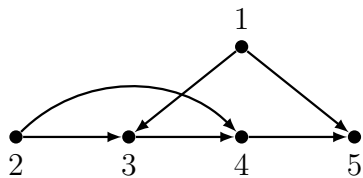
\begin{figure}[htp!]
\begin{tikzpicture}
\tikzstyle{vertex}=[circle,fill=black,minimum size=5pt,inner sep=0pt]
    \node[vertex] (1) at (0,1.2)  [label=above:$1$] {};
    \node[vertex] (2) at (-3,0) [label=below:$2$]{};
    \node[vertex] (3) at (-1.5,0) [label=below:$3$]{};
    \node[vertex] (4) at (0,0) [label=below:$4$]{};
    \node[vertex] (5) at (1.5,0) [label=below:$5$]{};
    \draw[->,-latex,line width=.3mm] (1) to (3);
    \draw[->,-latex,line width=.3mm] (1) to (5);
    \draw[->,-latex,line width=.3mm] (2) to (3);
    \draw[->,-latex,line width=.3mm,out=45,in=135] (2) to (4);
    \draw[->,-latex,line width=.3mm] (3) to (4);
    \draw[->,-latex,line width=.3mm] (4) to (5);
  \end{tikzpicture} 
  \caption{The Verma graph.}\label{fig:verma}
  \end{figure}

The essential graph of the sprinkle graph is also given in Figure \ref{fig:sprinkle}. There are no non-trivial equivalence classes and therefore $\widetilde{\cH}^*=\cH^*$. The only nontrivial relation between neighboring sets is $N^{*}(5)\subset N^{*}(4)$, so the matrices in $G^0$ have only one non-zero off-diagonal element on position $(5,4)$. The group of automorphisms of $\cH^*$ has only one non-trivial element which permutes $2$ and $3$.  Hence matrices in $G$ are in either of the two following forms:
$$
\left[\begin{array}{ccccc}
* & 0 & 0 & 0 & 0\\
0 & * & 0 & 0 & 0\\
0 & 0 & * & 0 & 0\\
0 & 0 & 0 & * & 0\\
0 & 0 & 0 & * & *\\
\end{array}\right]\qquad \mbox{and} \qquad\left[\begin{array}{ccccc}
* & 0 & 0 & 0 & 0\\
0 & 0 & * & 0 & 0\\
0 & * & 0 & 0 & 0\\
0 & 0 & 0 & * & 0\\
0 & 0 & 0 & * & *\\
\end{array}\right].
$$

The essential graph of the Verma graph $\cH$ is equal to the Verma graph itself. All equivalence classes are singletons. Moreover, there is no two distinct vertices satisfy $N^{*}(i)\subseteq N^{*}(j)$ and hence $G^0$ is equal to the group of all invertible diagonal matrices. Since there are no non-trivial automorphisms of $\cH$ then in fact the whole group $G$ consists solely of diagonal matrices. 

\tikzstyle{vertex}=[circle,fill=black,minimum size=5pt,inner sep=0pt]
\tikzstyle{vertex0}=[shape=circle,minimum size=5pt,inner sep=0pt,draw]
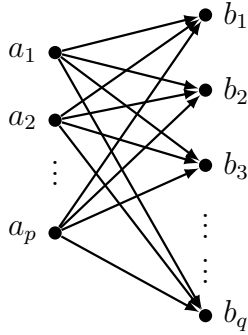
\begin{figure}[htp!]
  \begin{center}
  \begin{tikzpicture}
    \node[vertex] (1a) at (-1,1.5) [label=left:$a_{1}$]{}; 
    \node[vertex] (2a) at (-1,0.6) [label=left:$a_{2}$]{};
    \node[vertex] (4a) at (-1,-.9) [label=left:$a_{p}$]{}; 
    \node[vertex] (1b) at (1,2) [label=right:$b_{1}$]{}; 
    \node[vertex] (2b) at (1,1) [label=right:$b_{2}$]{}; 
    \node[vertex] (3b) at (1,0) [label=right:$b_{3}$]{};     
    \node[vertex] (4b) at (1,-2) [label=right:$b_{q}$]{};     
    \draw (-1,0) node{$\vdots$}; 
    \draw (1,-1.3) node{$\vdots$};  
    \draw (1,-.7) node{$\vdots$}; 
    \draw[->,-latex,line width=.3mm] (1a) to (1b);    
    \draw[->,-latex,line width=.3mm] (1a) to (2b);    
    \draw[->,-latex,line width=.3mm] (1a) to (3b);    
    \draw[->,-latex,line width=.3mm] (1a) to (4b); 
    \draw[->,-latex,line width=.3mm] (2a) to (1b);    
    \draw[->,-latex,line width=.3mm] (2a) to (2b);    
    \draw[->,-latex,line width=.3mm] (2a) to (3b);    
    \draw[->,-latex,line width=.3mm] (2a) to (4b); 
    \draw[->,-latex,line width=.3mm] (4a) to (1b);    
    \draw[->,-latex,line width=.3mm] (4a) to (2b);    
    \draw[->,-latex,line width=.3mm] (4a) to (3b);    
    \draw[->,-latex,line width=.3mm] (4a) to (4b);     
  \end{tikzpicture}
  \end{center}
  \caption{The graph of the factor model.}\label{fig:factor}
  \end{figure}

For a slightly more general example consider the DAGs defining factor models as given in Figure \ref{fig:factor}. We have $N_{\cH}(b_i)\subset N_{\cH}(a_i)$ for every $i,j$ and there are no other containment relations. The only non-zero off-diagonal elements of matrices in $G^0$ are in position $(a_i,b_j)$ for all $i,j$. For example if $p=2$ and $q=3$ then they are of the form
$$
\left[\begin{array}{cc|ccc}
* & 0 & * & * & *\\
0 & * & * & * & *\\
\hline
0 & 0 & * & 0 & 0\\
0 & 0 & 0 & * & 0\\
0 & 0 & 0 & 0 & *\\
\end{array}\right].
$$
Any automorphisms of $\cH$ is a product of any permutation permuting $\{a_1,\ldots,a_p\}$ and any permutation permuting $\{b_1,\ldots,b_q\}$. Consequently all matrices in $G$ look like the matrices in $G^0$ where the two diagonal blocks are replaced by arbitrary monomial matrices.

\appendix

\section{Proof of Theorem \ref{thm:G0}}\label{app:proof}

To prove this theorem, we will use the following two lemmas, in which $K$
is the concentration matrix of the model.

\begin{lem} \label{lm:CombCrit}
Let $A,B$ be subsets of $[m]$ of the same cardinality satisfying $j \in A$
and $i \not \in A$ and either $j \not \in B$ or else both $i,j \in B$.
If $\det K[A,B]$ is identically zero on the model but $\det K[A-j+i,B]$ is not,
then $E_{ij}\notin\liea$. 
\end{lem}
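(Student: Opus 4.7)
The plan is to prove the contrapositive: assuming $E_{ij}\in\liea$, so that $g_{t}:=I+tE_{ij}$ lies in $G$ for all $t\in\R$, I deduce that if $\det K[A,B]$ vanishes identically on $\cK(\cH)$ then so does $\det K[A-j+i,B]$. Because $j\in A$ and $i\notin A$ force $i\neq j$, one has $E_{ij}^{2}=0$ and hence $g_{t}^{-1}=I-tE_{ij}$, so
\[
K'\;:=\;g_{t}\cdot K\;=\;(I-tE_{ji})\,K\,(I-tE_{ij})
\]
is the result of the row operation $R_{j}\mapsto R_{j}-tR_{i}$ and the column operation $C_{j}\mapsto C_{j}-tC_{i}$ applied to $K$. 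Since $g_{t}\in G$, the matrix $K'$ lies in $\cK(\cH)$ whenever $K$ does, so $\det K'[A,B]$ vanishes identically on $\cK(\cH)$ for every $t\in\R$.

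I would then compute $\det K'[A,B]$ by a two-stage multilinearity argument, introducing the intermediate matrix $\tilde K:=(I-tE_{ji})K$, which differs from $K$ only through the row operation. The column operation affects the $(A,B)$-submatrix precisely when $j\in B$; in that case the hypothesis guarantees $i\in B$ as well. Multilinearity in column $j$ then writes $\det K'[A,B]$ as $\det\tilde K[A,B]$ minus $t$ times the determinant of the matrix obtained from $\tilde K[A,B]$ by replacing column $j$ with the column indexed by $i$; but this second matrix has two equal columns (at the positions formerly labelled $j$ and $i$) and hence vanishes. When $j\notin B$ the column operation does not touch the submatrix at all. In either case
\[
\det K'[A,B]\;=\;\det\tilde K[A,B].
\]
Row $j$ of $\tilde K[A,B]$ equals $K[\{j\},B]-tK[\{i\},B]$, so multilinearity in that row (valid because $i\notin A$, so $A-j+i$ is a well-defined $|A|$-element set) yields
\[
\det\tilde K[A,B]\;=\;\det K[A,B]\,-\,t\,\det K[A-j+i,B].
\]

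Combining these and using that $\det K'[A,B]$ and $\det K[A,B]$ both vanish identically on $\cK(\cH)$, the coefficient of $t$ in the resulting linear identity must vanish identically as well, forcing $\det K[A-j+i,B]\equiv 0$ on $\cK(\cH)$ and contradicting the hypothesis. The main subtlety is the column-operation step when $j\in B$: the ``either $j\notin B$ or else both $i,j\in B$'' clause is precisely what guarantees that the correction determinant has two equal columns and so vanishes; without this clause one would instead pick up a genuine $\det K[A-j+i,B-j+i]$ term that cannot be controlled, so this combinatorial constraint on $(A,B)$ is essential. Everything else is routine Lie-group bookkeeping and multilinearity.
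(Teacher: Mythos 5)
Your proof is correct and follows essentially the same route as the paper's: both arguments observe that the column operation leaves $\det K[A,B]$ unchanged precisely because of the hypothesis ``$j\notin B$ or $i,j\in B$'', and that the row operation contributes a term $-t\det K[A-j+i,B]$. The only cosmetic difference is that you make this explicit via multilinearity of the determinant and argue by contrapositive, whereas the paper phrases the same fact as a rank statement (row $K[j,B]$ lies in the span of the rows of $K[A-j,B]$ while $K[i,B]$ does not, so the perturbed submatrix has full rank for generic $K$ and $t\neq 0$).
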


\begin{proof}
Recall that the one-parameter group $I+t E_{ij}$ acts on $K$ via
\[ K \mapsto (I-tE_{ji}) K (I-tE_{ij}). \]
In words, this matrix is obtained from $K$ by adding a multiple of the
$i$-th row to the $j$-th row and adding a multiple of the $i$-th column to the $j$-th
column. Now consider the effect of this operation on $K[A,B]$. Since
either $j \not \in B$ or else both $i,j \in B$, adding the $i$-th
column to the $j$-th has either no effect on $K[A,B]$ or else is just
an elementary column operation on $K[A,B]$. This means that it does not
affect the rank of $K[A,B]$. On the other hand, since $\det K[A-j+i,B]$
is non-zero, the rows of $K[A-j,B]$ are linearly independent, and since
$\det K[A,B]$ is zero, the $j$-th row $K[j,B]$ lies in the span of
the rows of $K[A-j,B]$. This is not true for the $i$-th row $K[i,B]$,
hence the $A \times B$-submatrix of $K+tE_{ji}K + tKE_{ij}$ has full rank
for generic $K$. This means that $I+tE_{ij}$ does not preserve the model,
hence it does not lie in the group $G$.
\end{proof}

\begin{lem}\label{lm:CombCrit2}
Let $A,B$ be subsets of $[m]$ of the same cardinality satisfying $j \in A\cap B$ and 
and $i \not \in A\cup B$. If $\det K[A,B]$ is identically zero but $\det K[A-j+i,B]+\det K[A,B-j+i]$ is not, then $E_{ij}\notin\liea$. 
\end{lem}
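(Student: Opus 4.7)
I would follow the strategy of Lemma \ref{lm:CombCrit}: act on $K$ by the one-parameter subgroup $g_{t} = I + tE_{ij}$ and compute
\[
K'(t) \;=\; (I - tE_{ji})\, K\, (I - tE_{ij})
\]
together with the polynomial $\det K'(t)[A,B]$ in $t$. If $E_{ij}\in\liea$ then $K'(t)$ stays in the model for all $t$, and the hypothesis $\det K[A,B]\equiv 0$ on the model forces $\det K'(t)[A,B]\equiv 0$ on the model for every $t$; in particular the coefficient of $t$ must vanish identically on the model. The plan is then to show that this coefficient is (up to an overall sign) exactly the quantity $\det K[A-j+i,B] + \det K[A,B-j+i]$, giving the required contradiction.

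The key difference from Lemma \ref{lm:CombCrit} is that $i\notin A\cup B$ while $j\in A\cap B$: hence neither the row update (adding $-t$ times row $i$ to row $j$) nor the column update (adding $-t$ times column $i$ to column $j$) restricts to an elementary operation on the submatrix $K[A,B]$, so the rank argument of Lemma \ref{lm:CombCrit} is not directly available. A clean way to compute the coefficient of $t$ is via Cauchy--Binet applied to $K'(t)=(I-tE_{ji}) \cdot K \cdot (I-tE_{ij})$: because $I - tE_{ji}$ agrees with the identity away from its $(j,i)$-entry, $\det(I-tE_{ji})[A,C]$ is nonzero only for $C\in\{A, A-j+i\}$, with values $1$ and (a sign)$\cdot t$ respectively, and symmetrically $\det(I-tE_{ij})[D,B]$ is nonzero only for $D\in\{B, B-j+i\}$. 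Assembling the four surviving contributions yields
\[
\det K'(t)[A,B] \;=\; \det K[A,B] \;+\; t\bigl(\epsilon_{A}\det K[A-j+i,B] + \epsilon_{B}\det K[A,B-j+i]\bigr) \;+\; O(t^{2}),
\]
for signs $\epsilon_A,\epsilon_B\in\{\pm1\}$ determined by the positions of $j$ in $A,B$ and of $i$ in $A-j+i,B-j+i$.

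Under the ordering conventions that make $\epsilon_{A}=\epsilon_{B}$, the coefficient of $t$ is a nonzero scalar times $\det K[A-j+i,B]+\det K[A,B-j+i]$, and its required vanishing contradicts the hypothesis. The main obstacle I anticipate is precisely this sign bookkeeping: one needs to check that the two Laplace-type contributions coming from the row- and column-operations enter with matching signs under the natural convention on submatrix determinants, so that nonvanishing of the unsigned sum assumed in the hypothesis translates to nonvanishing of the actual linear-in-$t$ coefficient. Once this is verified, the conclusion $E_{ij}\notin\liea$ is immediate.
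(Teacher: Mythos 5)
Your proposal is correct and follows essentially the same route as the paper: both arguments reduce the claim to showing that the coefficient of $t$ in $\det\bigl((I-tE_{ji})K(I-tE_{ij})\bigr)[A,B]$ equals $-\bigl(\det K[A-j+i,B]+\det K[A,B-j+i]\bigr)$, the paper extracting this coefficient via the bivariate specialization $(I-sE_{ji})K(I-tE_{ij})$ and Laplace expansion along the row and column of $j$, where you use Cauchy--Binet. The sign bookkeeping you flag as the main obstacle resolves exactly as you suspect: the paper's expansion implicitly adopts the convention that $i$ occupies the position vacated by $j$ in $A-j+i$ and $B-j+i$, under which the two contributions enter with the same sign (and in the paper's applications the nonvanishing is anyway established by exhibiting a monomial in one determinant absent from the other, which is insensitive to the overall sign convention).
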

\begin{proof}Since $K[A,B]=0$, $E_{ij}\in\liea$ only if the determinant of the $(A,B)$-submatrix of $K_{t}:=(I-tE_{ji})K(I-t E_{ij})$ is zero. To show that it is not zero it suffices to show that the the linear term of $t$ does not vanish. To study this linear term, we alternatively study the linear term of $(I-sE_{ji})K(I-t E_{ij})$ further specializing to $s=t$. Because $E_{ij}$ has rank $1$, the determinant of the $(A,B)$-submatrix of $(I-sE_{ji})K(I-tE_{ij})$ is a  polynomial of order two in $s,t$. To find its coefficient of the linear term $s$ we can set $t=0$. Matrix $(I-sE_{ji})K$ is obtained by adding a multiple of the $i$-th row to the $j$-th row. Suppose that the elements of $A$ are $a_{1}<a_{2}<\cdots<a_{d}$ and the elements of $B$ are $b_{1}<b_{2}<\cdots <b_{d}$. Let $1\leq k\leq d$ be such that $j=a_{k}$. The determinant if its $(A,B)$-submatrix can be computed by expanding along the $k$-th row (which corresponds to the $j$-th row of $K$):
\begin{equation*}
\begin{split}
\det ((I-sE_{ji})K)[A,B]&=\sum_{l=1}^{d} (-1)^{k+l} (K_{jb_{l}}-sK_{ib_{l}})\det K[A-j,B-b_{l}]=\\
&= \det K[A,B]-s\det K[A-j+i,B].
\end{split}
\end{equation*}
Similar computations for the coefficient of $t$ give
$$
\det (K(I-tE_{ij}))[A,B]= \det K[A,B]-t\det K[A,B-j+i].
$$
Hence the coefficient of $t$ in the determinant of $K_{t}[A,B]$ is $-\det K[A-j+i,B]-\det K[A,B-j+i]$. If this sum does not identically vanish on the model then $E_{ij}\notin\liea$.
\end{proof}


Lemma~\ref{lm:G0lower} gives one direction of the proof of Theorem~\ref{thm:G0}; we need only prove that if $i\neq j$ and 
$N_{\cH}(i)\not\subseteq N_{\cH}(j)$, then $E_{ij}\notin\liea$.  First of all,
if there is no cup from $j$ to $i$, then $K[j,i]$ is identically zero,
while $K[i,i]$ is not. Hence $E_{ij} \not \in \liea$ (this is the special
case of Lemma~\ref{lm:CombCrit} with $A=\{j\}$ and $B=\{i\}$). Thus in
what follows we may assume that there do exist cups from $j$ to $i$.
We treat the various types of cups from $j$ to $i$ separately; in
each case, we assume that cups of the previous types do not exist.
Before we get going, we remark that, since there are no flags, for any
cup $(f,h,k,l)$ with $f \to h$ also $(f,k,k,l)$ is a cup. The following lemma will be also useful.
\begin{lem}\label{lem:Det00}
Let $u$ be a vertex in a NF-CG $\cH$. Let $D$ be the set of children of $u$ together with all their descendants. Then for every vertex $v\notin D\cup \{u\}$ such that there is no link between $u$ and $v$ we have $\det K[D\cup \{u\},D\cup \{v\}]=0$.
\end{lem}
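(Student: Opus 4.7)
The plan is to apply Corollary~\ref{cor:AB}, which reduces the statement to the combinatorial claim that no self-avoiding cup system exists from $A:=D\cup\{u\}$ to $B:=D\cup\{v\}$. Supposing such a system $U$ exists for contradiction, I would focus on the unique cup $U_0=(i_0,j_0,k_0,v)\in U$ ending at $v$. Using the definition of a cup together with the no-flag hypothesis (which forbids $x\to y-z$ as an induced subgraph), a routine check leaves only three possible shapes for $U_0$, handled as the three cases below.

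The two easy cases occur when $k_0=v$. If $j_0=v$, then $U_0=(i_0,v,v,v)$ with $i_0\to v$ (since $v\notin A$); either $i_0=u$, yielding the forbidden link $u\to v$, or $i_0\in D$, so $v$ is a child of a vertex in $D$ and hence $v\in D$. If $j_0\neq v$, then $j_0-v$ and the no-flag rule applied to $i_0\to j_0-v$ forces $i_0=j_0$, so $U_0=(j_0,j_0,v,v)$ with $j_0-v$; again $j_0=u$ gives $u-v$, while $j_0\in D$ places $v$ in the undirected component of $j_0$. Here I would use the fact that in an NF-CG a semi-directed path from $u$ reaching $j_0$'s component can be extended through $j_0-v$, so $v$ is a descendant of some child of $u$ and hence $v\in D$, the desired contradiction.

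The hard case, and the main obstacle, is $k_0\neq v$. Here $v\to k_0$, and the no-flag rule applied to $v\to k_0-j_0$ forces $j_0=k_0$, so $U_0=(i_0,k_0,k_0,v)$; a short direct argument using $i_0\in A$ and the assumption that $u$ and $v$ are not linked shows $k_0\in D$. To close this case I would build an infinite chain inside the finite set $D$, iterating as follows. Given $k_i\in D\subseteq B$, the cup system contains a unique cup $U_{i+1}=(i_{i+1},j_{i+1},k_{i+1},k_i)$ ending at $k_i$; self-avoidance in the third coordinate excludes $k_{i+1}\in\{k_0,\ldots,k_i\}$, so in particular $k_{i+1}\neq k_i$, and the cup conditions then force $k_i\to k_{i+1}$. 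Thus $k_{i+1}$ is a child of $k_i\in D$, so $k_{i+1}\in D$ again, and the sequence $k_0,k_1,k_2,\ldots$ consists of pairwise distinct elements of $D$, contradicting $|D|<\infty$. Together the three cases rule out any self-avoiding cup system from $A$ to $B$, so $\det K[A,B]$ vanishes identically on the model.
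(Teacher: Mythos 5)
Your first step---reducing via Corollary~\ref{cor:AB} to the non-existence of a self-avoiding cup system from $D\cup\{u\}$ to $D\cup\{v\}$---matches the paper, and the idea of forcing infinitely many distinct elements into the finite set $D$ is sound in principle. But the case analysis has a genuine gap: you misapply the no-flag hypothesis. A flag is a configuration $x\to y-z$ occurring as an \emph{induced} subgraph, so its absence does not force $x=y$ (your Case 2) or $j_0=k_0$ (your Case 3); it only forces $x$ and $z$ to be linked, and Lemma~\ref{fac:nfs} then upgrades this to $x\to z$. (Compare the paper's remark before its appendix case analysis: no flags implies that alongside a cup $(f,h,k,l)$ with $f\to h$ there is \emph{also} a cup $(f,k,k,l)$---not that $h=k$.) The dismissed sub-cases are real. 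In Case 2 with $i_0\neq j_0$ the contradiction is still recoverable ($i_0\to j_0-v$ gives $i_0\to v$, so $i_0=u$ is a forbidden link and $i_0\in D$ forces $v\in D$), but in Case 3 with $j_0\neq k_0$ you only get $j_0\in D$ and $j_0-k_0$, and your iteration then needs $k_0\in D$, i.e., closure of $D$ under undirected neighbours. That closure---which you also invoke at the end of Case 2 when passing from $j_0\in D$ through $j_0-v$ to ``$v$ is a descendant''---holds only under the broad chain-graph convention that descendants are reached by descending paths allowing undirected edges; under directed-path reachability these steps fail, even though the lemma itself remains true.

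The paper's proof sidesteps all of this with a single observation about \emph{second} coordinates: for a cup starting at $d\in D$ the second entry is $d$ or a child of $d$, hence in $D$; for the cup starting at $u$ the second entry must be a child of $u$ (if it equalled $u$, the fourth entry would have to be a parent of $u$ or of an undirected neighbour of $u$, hence by Lemma~\ref{fac:nfs} a parent of $u$, which can be neither in $D$ by acyclicity nor equal to $v$ by the no-link hypothesis). A self-avoiding system would thus have $|D|+1$ pairwise distinct second coordinates inside the $|D|$-element set $D$, and the pigeonhole principle finishes the proof. You should either rework your argument along these lines or explicitly justify the descendant convention and handle the non-collapsed cup shapes.
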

\begin{proof}By Corollary \ref{cor:AB} it is enough to show that there is no self-avoiding cup system from $D\cup \{u\}$ to $D\cup \{v\}$. It is clear that the second element of every cup starting in $d\in D$ needs to lie in $D$ just because it is either equal to $d$ or it is equal to $d'$ such that $d\to d'$ in $\cH$. Also every cup from $u$ needs to have its second entry in $D$. Indeed, let $(u,l_{2},l_{3},l_{4})$ be such a cup. The node $l_{2}$ is either equal to $u$ or it is a child of $u$, in which case it lies in $D$. So suppose that $l_{2}=u$ and show that this leads to a contradiction. If $l_{2}=u$ then $l_{3}$ is either $u$ or a neighbor of $u$. If $l_{3}=u$ then $l_{4}$ must be a parent of $u$, which cannot be a vertex of $D$ (because otherwise there is a semi-directed cycle in $\cH$) and it cannot be $v$ because there is no arrow $v\to u$ (by assumption). If $l_{3}\in n_{\cH}(u)$  then $l_{4}$ must be a parent of $l_{3}$ and by the no flag assumption also a parent of $u$. This situation is also impossible because $l_{4}$ cannot lie in $D\cup \{v\}$.  Hence, by the pigeon hole principle, in any cup system from $D\cup\{u\}$ to $D\cup \{v\}$, two of the elements after one step coincide, and this proves the claim.
\end{proof}

In what follows we assume that $\cH$ is essential.

\subsection*{I. Vertex $i$ lies in $n_{\cH}(j)\cup c_{\cH}(j)$.} In that case there must exist 
$$l\in (n_{\cH}(i)\cup c_{\cH}(i))\setminus (n_{\cH}(j)\cup c_{\cH}(j)).$$ 
Let $D$ denote the set of all children of $l$ together with their descendants. We have $i,j\notin D$  and thus  $A:=D+j$ and $B:=D+l$ have the same cardinalities.  By Lemma \ref{lem:Det00} with $u=l$, $v=j$ we have $\det K[A,B]=0$. On the other hand, there does exist a self-avoiding cup system from $A-j+i$ to $B$ that links $i$ directly to $l$ without crossing $D$ and each $d \in D-j$ to itself
via $(d,d,d,d)$ and hence $\det K[A-j+i,B]\neq 0$ by Corollary \ref{cor:AB}. Now $E_{ij}\notin\liea$ by Lemma~\ref{lm:CombCrit}.

\subsection*{II. There is no arrow $i \to j$.} In that case let $D$ be the set of all children of $i$ together with their descendants. Set $A:=D + j$ and $B:= D+i$. By Lemma \ref{lem:Det00} $\det K[A,B]=0$.
But clearly $\det K[A-j+i,B]=\det K[B,B] \neq 0$. 

\subsection*{Mid-proof break.} We pause a moment to point out that we have used that $\cH$ has no flags,
but not yet that it is essential. This will be exploited in the following
arguments. Indeed, in the remaining cases, there must be an arrow $i
\to j$. This arrow must be essential, hence either the parents of $j$
in the undirected component $T$ of $i$ do not form a clique, or else
one of $\{i,j\}$ has a parent outside $T$ that is not a parent of the
other. We deal with these cases as follows.

\subsection*{III. There is an arrow $k \to j$ with $k$ in the component of $i$  at distance at
least $2$.} In that case let $D$ be the set of all children of $i$ together with their descendants. Set $A:=D+k$ and $B:=D+i$. By Lemma \ref{lem:Det00}
$\det K[A,B]=0$. But, as in the first case, $\det K[A-j+i,B] \neq 0$ because there is a self-avoiding cup system from $A-j+i$ to $B$ given by $(d,d,d,d)$ for $d\in D-j+i$ and $(j,j,j,k)$. Again, we conclude that $E_{ij} \not \in \liea$.

\medskip
\subsection*{IV. There is an induced subgraph like in Figure \ref{fig:headache}.} Let $D$ be the set of all children of $k$ together with their descendants. Set $A=D+k$ and $B=D+l$ and note that both $A$ and $B$ contain $j$. We again have $\det K[A,B]=0$ by Lemma \ref{lem:Det00}.  However, both $\det K[A-j+i,B]$ and $\det K[A,B-j+i]$ are nonzero. Even more: the sum of these two determinants is also nonzero because $\det K[A-j+i,B]$ has a monomial that does not appear in $\det K[A,B-j+i]$: consider the cup system from $A-j+i$ to $B$ given by $(i,i,l,l)$, $(k,j,j,j)$ and $(d,d,d,d)$ for all $d\in D-j$. By Proposition \ref{prop:monomialsInDet} this system corresponds to a monomial in $\det K[A-j+i,B]$. On the other hand this monomial cannot appear in $\det K[A,B-j+i]$ because it contains only one element of $\Lambda$, namely $\lambda_{kj}$, and only one off-diagonal element of $\Omega$, namely $\omega_{il}$. This means that it must correspond to a cup system between $A$ and $B-j+i$ that contains only one undirected edge $i-l$ and onearrow $k\to j$. However any cup from $i$ to $A$ must contain either an arrow $i\to p$ for some $p\in D$ or an undirected edge $i-k$. By Lemma~\ref{lm:CombCrit2} we conclude that $E_{ij}\notin\liea$.

\subsection*{V. There is an arrow $k \to j$ with $k \not \in T$ and no arrow
between $k$ and $i$.}
So we have the induced subgraph $i \to j \ot k$. Let $D$ be the set of all children of $i$ together with their descendants. Set $A:=D+k$ and $B:=D+i$.
By Lemma \ref{lem:Det00} $\det K[A,B] =0$. On the other hand,
$\det K[A-j+i,B] \neq 0$, because of the self-avoiding cup system from
$A-j+i$ to $B$ consisting of $(k,j,j,j)$ and $(i,i,i,i)$ and $(d,d,d,d)$
for all $d \in D-j$. Again, we may apply Lemma~\ref{lm:CombCrit}, this
time with $i,j$ both in $B$, to conclude that $E_{ij} \not \in \liea$.

\subsection*{VI. There is an arrow $l \to i$ with no arrow from $l$ to $j$.}
Pictorially, we have $l \to i \to j$. Let $D$ be the set of children of $j$ together with all their descendants. Set $A=D+j$ and $B=D+l$. By Lemma \ref{lem:Det00} we have $K[A,B]=0$. However, $K[A-j+i,B]\neq 0$ and hence $E_{ij}\notin\liea$ by Lemma~\ref{lm:CombCrit}.

\subsection*{VII. There is an arrow $i \to l$ and $l\to j$} Without loss of generality we can assume that $l$ is minimal in the sense that if $i\to l'$, $l'\to j$ then there is no arrow from $l$ to $l'$. Since $\cH$ is essential then $l\to j$ is an essential arrow. This implies one of the following possibilities:
\begin{itemize}
\item[(i)] There exists $k$ in the component of $l$ with distance at least two to $l$ and with $k\to j$ 
\item[(ii)]  There is an induced subgraph like in Figure \ref{fig:headache2}. 
\item[(iii)] There are arrows $l\to k$, $k\to j$ 
\item[(iv)] There is an arrow $k\to l$ and no arrow from $k$ to $j$.
\end{itemize}

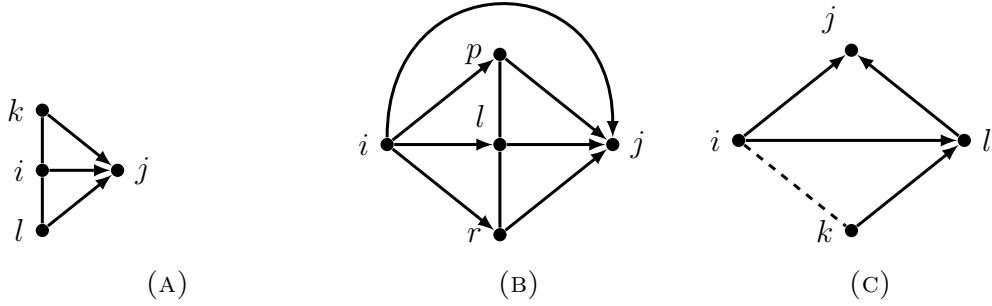
\begin{figure}
        \centering
        \begin{subfigure}[b]{0.3\textwidth}
        \begin{tikzpicture}
\tikzstyle{vertex}=[circle,fill=black,minimum size=5pt,inner sep=0pt]
    \node[vertex] (i) at (0,0)  [label=left:$i$] {};
    \node[vertex] (j) at (0,.8) [label=left:$k$]{};
    \node[vertex] (k) at (1,0) [label=right:$j$]{};
    \node[vertex] (l) at (0,-.8) [label=left:$l$]{};
    \draw[line width=.4mm] (i) to (j);
    \draw[->,-latex,line width=.4mm] (i) to (k);
    \draw[line width=.4mm] (i) to (l);
    \draw[->,-latex,line width=.4mm] (j) to (k);
    \draw[->,-latex,line width=.4mm] (l) to (k);
  \end{tikzpicture}
  \caption{}\label{fig:headache}
        \end{subfigure}%
        ~ 
        \begin{subfigure}[b]{0.3\textwidth}
                \begin{tikzpicture}
\tikzstyle{vertex}=[circle,fill=black,minimum size=5pt,inner sep=0pt]
    \node[vertex] (w) at (-1.5,0)  [label=left:$i$] {};
    \node[vertex] (i) at (0,0)  [label=120:$l$] {};
    \node[vertex] (j) at (0,1.2) [label=left:$p$]{};
    \node[vertex] (k) at (1.5,0) [label=right:$j$]{};
    \node[vertex] (l) at (0,-1.2) [label=left:$r$]{};
    \draw[line width=.4mm] (i) to (j);
    \draw[->,-latex,line width=.4mm] (i) to (k);
    \draw[line width=.4mm] (i) to (l);
    \draw[->,-latex,line width=.4mm] (j) to (k);
    \draw[->,-latex,line width=.4mm] (l) to (k);
        \draw[->,-latex,line width=.4mm,out=90,in=90,looseness=2] (w) to (k);
                \draw[->,-latex,line width=.4mm] (w) to (i);
 \draw[->,-latex,line width=.4mm] (w) to (j);
 \draw[->,-latex,line width=.4mm] (w) to (l);
  \end{tikzpicture}
                \caption{}
                \label{fig:headache2}
        \end{subfigure}
        ~ 
        \begin{subfigure}[b]{0.3\textwidth}
              \begin{tikzpicture}
\tikzstyle{vertex}=[circle,fill=black,minimum size=5pt,inner sep=0pt]
    \node[vertex] (i) at (-1.5,0)  [label=left:$i$] {};
    \node[vertex] (j) at (0,1.2)  [label=120:$j$] {};
    \node[vertex] (l) at (1.5,0) [label=right:$l$]{};
    \node[vertex] (k) at (0,-1.2) [label=left:$k$]{};
    \draw[line width=.4mm,dashed] (i) to (k);
    \draw[->,-latex,line width=.4mm] (i) to (j);
    \draw[->,-latex,line width=.4mm] (i) to (l);
        \draw[->,-latex,line width=.4mm] (l) to (j);
    \draw[->,-latex,line width=.4mm] (k) to (l);
  \end{tikzpicture}
  \caption{}\label{fig:headache3}
        \end{subfigure}
        \caption{Some special induced subgraphs considered in the proof.}\label{fig:animals}
\end{figure}

\subsection*{VII.(i)} In this case we have an induced subgraph $k\to j\ot l$. Let $D$ be the set of children of $l$ and all their descendants. Set $A=D+l$ and $B=D+k$. The argument that $\det K[A,B]=0$ is the same as in the previous cases. By Lemma \ref{lm:CombCrit2} $E_{ij}\notin\liea$ because $\det K[A-j+i,B]+\det K[A,B-j+i]\neq 0$. To verify this last statement note that by Proposition \ref{prop:monomialsInDet} $\det K[A-j+i,B]$ contains a monomial corresponding to the cup system $(d,d,d,d)$ for $d\in D-j$, $(i,k,k,k)$ and $(l,j,j,j)$. This monomial contains $\lambda_{ki}$, $\lambda_{lj}$ and no off-diagonal $\omega$'s. There is no cup system from $A$ to $B-j+i$ that uses only $k\to j$ and $l\to j$ and hence this monomial does not appear in $\det K[A,B-j+i]$. 

\subsection*{VII.(ii)} Let $D$ be the set of children of $p$ and all their descendants. Set $A=D+p$ and $B=D+r$. Again $\det K[A,B]=0$ but  $\det K[A-j+i,B]+\det K[A,B-j+i]\neq 0$. For this, we note that $\det K[A-j+i,B]$ contains a monomial corresponding to the cup system $(d,d,d,d)$ for $d\in D-j$, $(i,r,r,r)$ and $(p,j,j,j)$, which does not appear in $\det K[A,B-j+i]$. Now $E_{ij}\notin\liea$ by Lemma \ref{lm:CombCrit2}.

\subsection*{VII.(iii)} Note that in this case no link between $i$ and $k$ is possible (by maximality of $l$ and no semi-directed cycle assumption).  But then $E_{ij}\notin\liea$ by Case V.

\subsection*{VII.(iv)} Note that in this case by case VI. the arrow $k\to i$ is impossible and thus we have either $i-k$, $i\to k$ are there is no link between them. The induced subgraph is given in Figure \ref{fig:headache3}, where the dashed edge indicate the three possibilities for the link between $i$ and $k$. Let $D$ be the set of children of $j$ together with all descendants. Set $A=D+j$, $B=D+k$. Again by Lemma \ref{lem:Det00} we have that $\det K[A,B]=0$. Moreover, $\det K[A-j+i,B]\neq 0$. Now $E_{ij}\notin\liea$ by Lemma \ref{lm:CombCrit}.
This exhausts all possible cases and hence finishes the proof.

\bibliographystyle{amsalpha}
\bibliography{../!bibliografie/algebraic_statistics}
\end{document}